\newtheorem{theorem}{Theorem}[section]
\newtheorem{lemma}[theorem]{Lemma}
\newtheorem{proposition}[theorem]{Proposition}
\theoremstyle{definition}
\newtheorem{definition}[theorem]{Definition}
\numberwithin{equation}{section}
\theoremstyle{remark}
\providecommand{\norm}[1]{ \lVert#1  \rVert}
\newcommand{\dmu}{\, d \mu}
\newcommand{\dx}{\, d x}
\newcommand{\dt}{\, d t}
\newcommand{\dla}{\, d \lambda}
\def\Xint#1{\mathchoice
   {\XXint\displaystyle\textstyle{#1}}%
   {\XXint\textstyle\scriptstyle{#1}}%
   {\XXint\scriptstyle\scriptscriptstyle{#1}}%
   {\XXint\scriptscriptstyle\scriptscriptstyle{#1}}%
   \!\int}
\def\XXint#1#2#3{{\setbox0=\hbox{$#1{#2#3}{\int}$}
     \vcenter{\hbox{$#2#3$}}\kern-.5\wd0}}
\def\dashint{\Xint-}
\DeclareMathOperator*{\esssup}{ess\,sup}
\newcommand{\citecomment}[2][]{\citen{#2}#1\citevar}
\newcommand{\citeone}[1]{\citecomment{#1}}
\newcommand{\citetwo}[2][]{\citecomment[,~#1]{#2}}
\newcommand{\citevar}{\@ifnextchar\bgroup{;~\citeone}{\@ifnextchar[{;~\citetwo}{]}}}
\newcommand{\citefirst}{\@ifnextchar\bgroup{\citeone}{\@ifnextchar[{\citetwo}{]}}}
\begin{document}

\title{Characterizations of parabolic reverse H\"older classes}

\author{Juha Kinnunen}
\address{Department of Mathematics, Aalto University, P.O. Box 11100, FI-00076 Aalto, Finland}
\email{juha.k.kinnunen@aalto.fi}

\author{Kim Myyryl\"ainen}
\address{Department of Mathematics, Aalto University, P.O. Box 11100, FI-00076 Aalto, Finland}
\email{kim.myyrylainen@aalto.fi}
\thanks{The second author was supported by the Magnus Ehrnrooth Foundation.}

\subjclass[2020]{42B35, 42B37}

\keywords{Parabolic reverse H\"older inequality, parabolic Gehring lemma, parabolic Muckenhoupt weights, one-sided weights, doubly nonlinear equation}

\begin{abstract}
This paper discusses parabolic reverse H\"older inequalities and their connections to parabolic Muckenhoupt weights. The main result gives several characterizations for this class of weights. There are challenging features related to the parabolic geometry and the time lag, for example, in covering and chaining arguments. We also prove a Gehring type self-improving property for parabolic reverse H\"older inequalities.

\end{abstract}

\maketitle

\section{Introduction}

This paper continues and complements a discussion of parabolic reverse H\"older inequalities and Muckenhoupt weights in~\cite{KinnunenMyyry2023} and \cite{kinnunenSaariMuckenhoupt,kinnunenSaariParabolicWeighted}.
We attempt to create a higher dimensional version of the one-dimensional theory introduced by Sawyer~\cite{sawyer1986} and studied, for example, by
Cruz-Uribe, Neugebauer and  Olesen~\cite{CUNO1995}, Mart\'\i n-Reyes, Pick and de la Torre~\cite{MRPT1993},  Mart\'\i n-Reyes and de la Torre~\cite{MRT1994}. 
Our approach is motivated by certain doubly nonlinear parabolic partial differential equations as in~\cite{KinnunenMyyry2023,kinnunenSaariMuckenhoupt,kinnunenSaariParabolicWeighted}.
Several challenges occur compared to the standard theory of weighted norm inequalities.
For example, the doubling property of Muckenhoupt weights is replaced by a forward in time doubling property in~\cite{KinnunenMyyry2023,kinnunenSaariParabolicWeighted}. 
A parabolic Muckenhoupt weight satisfies a forward in time doubling property, but it is not currently known whether the same holds true for a weight satisfying a parabolic reverse H\"older inequality.
There are also interesting features related to the parabolic geometry and the time lag.
In contrast with the parabolic Muckenhoupt classes, a parabolic reverse H\"older inequality with a positive time lag implies the corresponding condition with zero time lag.
Alternative higher dimensional versions have been studied by Berkovits~\cite{berkovits2011}, Forzani, Mart\'{\i}n-Reyes and Ombrosi~\cite{ForzaniMartinreyesOmbrosi2011}, Lerner and Ombrosi~\cite{LO2010} and Ombrosi~\cite{Ombrosi2005}. 
However, the geometries in these approaches are different from ours.

Let $1<p<\infty$, $x\in\mathbb R^n$, $L>0$ and $t \in \mathbb{R}$.
A parabolic rectangle centered at $(x,t)$ with side length $L$ is
\[
R = R(x,t,L) = Q(x,L) \times (t-L^p, t+L^p)
\]
and its upper and lower parts are
\[
R^+(\gamma) = Q(x,L) \times (t+\gamma L^p, t+L^p)
\]
and
\[
R^-(\gamma) = Q(x,L) \times (t - L^p, t - \gamma L^p) ,
\]
where $0 \leq \gamma < 1$ is called the time lag. 
Here $Q(x,L)=\{y \in \mathbb R^n: \lvert y_i-x_i\rvert \leq \frac L2,\,i=1,\dots,n\}$ denotes a spatial cube
with center $x$ and side length $L$. 

Let $1<q<\infty$.
A nonnegative weight $w$ belongs to the parabolic reverse H\"older class $RH^+_q$ if
there exists a constant $C$ such that
\[
\biggl( \dashint_{R^-(\gamma)} w^{q} \biggr)^\frac{1}{q} \leq C \dashint_{R^+(\gamma)} w
\]
for every parabolic rectangle $R \subset \mathbb R^{n+1}$.
Lemma \ref{lem:RHItimelag} shows that the definition of $RH^+_q$ does not depend on the time lag.
In other words, if a weight belongs to $RH^+_q$ with some time lag, it belongs to $RH^+_q$ with any time lag.
Reverse H\"older inequalities are closely related to Muckenhoupt weights.
A weight $w$ satisfies a parabolic Muckenhoupt condition, if 
\begin{equation*}
\sup_{R \subset \mathbb R^{n+1}}\biggl( \dashint_{R^-(\gamma)} w \biggr) \biggl( \dashint_{R^+(\gamma)} w^{\frac{1}{1-q}} \biggr)^{q-1} < \infty.
\end{equation*}
Parabolic Muckenhoupt classes are independent of the positive time lag $\gamma>0$, see \cite[Proposition 3.4 (vii)]{kinnunenSaariParabolicWeighted} and \cite[Theorem 3.1]{KinnunenMyyry2023}.
Every parabolic Muckenhoupt weight satisfies a parabolic reverse H\"older inequality, see \cite[Theorem 5.2]{kinnunenSaariParabolicWeighted} and \cite[Theorem 5.2]{KinnunenMyyry2023}.
Conversely, Theorem \ref{rhar} shows that a weight satisfying the  parabolic reverse H\"older inequality is a parabolic Muckenhoupt weight under the assumption that the weight satisfies a forward in time parabolic doubling condition in~\eqref{eq:pardoubling}.

Our main result Theorem \ref{thm:RHIchar} gives several characterizations of the parabolic reverse H\"older inequality.
We also study the corresponding limiting class $RH^+_\infty$ in Proposition \ref{rhinfty}.
Self-improving phenomena are essential in the theory of Muckenhoupt weights and reverse H\"older inequalities.
Theorem \ref{gehring} is a parabolic Gehring type higher integrability result, which asserts that
\[
w\in RH^+_q
\Longrightarrow
w\in RH^+_{q+\varepsilon}
\]
for some $\varepsilon>0$. 
The characterizations of parabolic reverse H\"older inequalities and the parabolic Gehring lemma also hold in the case $p=1$ which extends the corresponding one-dimensional results.

\section{Definition and properties of parabolic reverse H\"older inequalities}

Throughout the underlying space is $\mathbb{R}^{n+1}=\{(x,t):x=(x_1,\dots,x_n)\in\mathbb R^n,t\in\mathbb R\}$.
Unless otherwise stated, constants are positive and the dependencies on parameters are indicated in the brackets.
The Lebesgue measure of a subset $A$ of $\mathbb{R}^{n+1}$ is denoted by $\lvert A\rvert$.
The integral average of $f \in L^1(A)$ in measurable set $A\subset\mathbb{R}^{n+1}$, with $0<|A|<\infty$, is denoted by
\[
f_A = \dashint_A f \dx \dt = \frac{1}{\lvert A\rvert} \int_A f(x,t)\dx\dt .
\]

Instead of Euclidean cubes, we work with the collection of parabolic rectangles $R = R(x,t,L)$ in $\mathbb{R}^{n+1}$.
The spatial side length of a parabolic rectangle $R$ is denoted by $l_x(R)=L$ and the time length by $l_t(R)=2L^p$.
We write $R^\pm$ for $R^{\pm}(0)$ in the case with zero time lag.
The top of a rectangle $R = R(x,t,L)$ is $Q(x,L) \times\{t+L^p\}$
and the bottom is $Q(x,L) \times\{t-L^p\}$.
The $\lambda$-dilate of $R$  with $\lambda>0$ is denoted by $\lambda R = R(x,t,\lambda L)$.

This section discusses basic properties of parabolic reverse H\"older inequalities. 
We begin with the definition of the uncentered parabolic maximal functions.
The differentials $\dx \dt$ in integrals are omitted in the sequel.

\begin{definition}
Let $f$ be a locally integrable function. 
The uncentered forward in time and backward in time parabolic maximal functions are defined by
\[
M^{+}f(x,t) = \sup_{R^-\ni(x,t)} \dashint_{R^+} \lvert f \rvert
\]
and
\[
M^{-}f(x,t) = \sup_{R^+\ni(x,t)} \dashint_{R^-} \lvert f \rvert.
\]
\end{definition}

A locally integrable nonnegative function $w$ is called a weight.
We begin with the definitions of parabolic reverse H\"older classes $RH^+_q$ and $RH^+_\infty$.
It is enough to consider the case with zero time lag, since Lemma \ref{lem:RHItimelag} below shows that the time lag does not play any role in the definitions.

\begin{definition}
Let $1<q<\infty$.
A weight $w$ belongs to the parabolic reverse H\"older class $RH^+_q$ if
there exists a constant $C=[w]_{RH^+_q}$ such that
\[
\biggl( \dashint_{R^-} w^{q} \biggr)^\frac{1}{q} \leq C \dashint_{R^+} w
\]
for every parabolic rectangle $R \subset \mathbb R^{n+1}$.
If the condition above holds with the time axis reversed, then $w \in RH^-_q$.
\end{definition}

\begin{definition}
A weight $w$ belongs to the parabolic reverse H\"older class $RH^+_\infty$ if
there exists a constant $C=[w]_{RH^+_\infty}$ such that
\[
\esssup_{R^-} w \leq C \dashint_{R^+} w
\]
for every parabolic rectangle $R \subset \mathbb R^{n+1}$.
If the condition above holds with the time axis reversed, then $w \in RH^-_\infty$.
\end{definition}

We discuss characterizations  for $RH^+_\infty$.
Compare Proposition~\ref{rhinfty}~$(ii)$ with Theorem~\ref{thm:RHIchar}~$(ii)$
and Proposition~\ref{rhinfty}~$(iii)$ with Theorem~\ref{thm:RHIchar}~$(vi)$ below.

\begin{proposition}
\label{rhinfty}
Let $w$ be a weight.
The following conditions are equivalent.
\begin{enumerate}[(i),topsep=5pt,itemsep=5pt]
\item $w \in RH^+_\infty$.

\item There exists a constant $C$ such that
\[
\frac{w(E)}{w(R^+)} 
\leq
C \frac{\lvert E \rvert}{\lvert R^- \rvert} 
\]
for every measurable set $E\subset R^-$.

\item There exists a constant $C$ such that
\[
M^+(w \chi_{R^-})(x,t) \leq C w_{R^+}
\]
for every $(x,t)\in R^-$.

\end{enumerate}

\end{proposition}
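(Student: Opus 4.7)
The plan is to establish the cyclic implications $(i)\Rightarrow(ii)\Rightarrow(iii)\Rightarrow(i)$. A useful preliminary observation is that, for a parabolic rectangle with zero time lag, $\abs{R^-}=\abs{R^+}$, so that $w_{R^+}=w(R^+)/\abs{R^-}$; this makes the normalizations appearing in $(i)$, $(ii)$, and $(iii)$ directly comparable.

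For $(i)\Rightarrow(ii)$, I would simply estimate
\[
w(E) = \int_E w \leq \abs{E}\esssup_{R^-}w \leq C\abs{E}\,w_{R^+} = \frac{C\abs{E}}{\abs{R^-}}w(R^+),
\]
and divide by $w(R^+)$. For $(ii)\Rightarrow(iii)$, I would fix $(x,t)\in R^-$ and an arbitrary parabolic rectangle $\tilde R$ with $\tilde R^-\ni(x,t)$; since $E=\tilde R^+\cap R^-\subset R^-$ is measurable, applying $(ii)$ together with $\abs{E}\leq\abs{\tilde R^+}$ yields
\[
\dashint_{\tilde R^+}w\chi_{R^-}=\frac{w(E)}{\abs{\tilde R^+}}\leq \frac{C\,w(R^+)\abs{E}}{\abs{R^-}\abs{\tilde R^+}}\leq C\,w_{R^+},
\]
and taking the supremum over such $\tilde R$ gives $(iii)$.

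The most delicate step is $(iii)\Rightarrow(i)$, for which the plan is to invoke a one-sided parabolic Lebesgue differentiation. For almost every $(x,t)$ in the interior in time of $R^-$, I can pick parabolic rectangles $\tilde R_k$ whose spatial side length tends to zero and such that $\tilde R_k^-\ni(x,t)$ while $\tilde R_k^+\subset R^-$ for all sufficiently large $k$ (for instance, by centering $\tilde R_k$ in time slightly above $t$). At a Lebesgue point of $w$, the upper-half averages satisfy $\dashint_{\tilde R_k^+}w\to w(x,t)$, and since $w\chi_{R^-}=w$ on $\tilde R_k^+$, each such average is dominated by $M^+(w\chi_{R^-})(x,t)\leq C\,w_{R^+}$. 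Passing to the essential supremum over $(x,t)\in R^-$ gives $(i)$.

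The main obstacle I anticipate is in this last step: one must simultaneously arrange $(x,t)\in\tilde R_k^-$, keep $\tilde R_k^+$ inside $R^-$, and still obtain the pointwise Lebesgue limit from the off-center averages $\dashint_{\tilde R_k^+}w$. The first two constraints are handled by the time-offset construction above, which excludes only the top face in time of $R^-$ and hence a set of measure zero. The convergence of the averages is standard because parabolic rectangles have uniformly bounded parabolic eccentricity, so the classical one-sided Lebesgue differentiation theorem applies to the family $\{\tilde R_k^+\}$ shrinking to $(x,t)$ from above in time.
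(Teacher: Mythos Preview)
Your proof is correct. The route differs slightly from the paper's: the paper proves the two equivalences $(i)\Leftrightarrow(ii)$ and $(i)\Leftrightarrow(iii)$ separately, whereas you run the cycle $(i)\Rightarrow(ii)\Rightarrow(iii)\Rightarrow(i)$. Your $(i)\Rightarrow(ii)$ and $(iii)\Rightarrow(i)$ match the paper's arguments (the latter is exactly the Lebesgue differentiation step the paper cites from \cite{KinnunenMyyryYang2022}). The genuinely new link is your direct $(ii)\Rightarrow(iii)$ via $E=\tilde R^+\cap R^-$, which bypasses the paper's detour $(ii)\Rightarrow(i)\Rightarrow(iii)$; the paper instead obtains $(ii)\Rightarrow(i)$ by a short level-set argument (if $E_\lambda=R^-\cap\{w>\lambda\}$ has positive measure then $\lambda\lvert E_\lambda\rvert\le w(E_\lambda)\le C w_{R^+}\lvert E_\lambda\rvert$ forces $\lambda\le C w_{R^+}$), and then $(i)\Rightarrow(iii)$ is immediate since $\esssup_{R^-}w$ dominates every average of $w\chi_{R^-}$. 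Your direct link is arguably cleaner; the paper's level-set step, on the other hand, gives the implication $(ii)\Rightarrow(i)$ without passing through the maximal function or Lebesgue differentiation at all.
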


\begin{proof}
First we show that $(i)\Leftrightarrow (ii)$.
Assume that $(i)$ holds and let $E\subset R^-$ be a measurable set. Then
\[
w(E) = \int_{R^-} w \chi_E \leq \lvert E \rvert \esssup_{R^-} w 
\leq C w_{R^+} \lvert E \rvert.
\]
This proves $(ii)$. 
Then assume that $(ii)$ holds. Let $E_\lambda = R^- \cap \{w>\lambda\}$, $\lambda>0$. 
We have
\[
\lambda \lvert E_\lambda \rvert \leq w(E_\lambda) \leq C w_{R^+} \lvert E_\lambda \rvert ,
\]
which implies that $\lambda \leq C w_{R^+}$ when $\lvert E_\lambda \rvert >0$.
Thus, 
we obtain $(i)$ since
\[
\esssup_{R^-} w = \sup \{ \lambda: \lvert E_\lambda \rvert >0 \} \leq C w_{R^+} .
\]

Then we show that $(i)\Leftrightarrow (iii)$.
We observe that $(i)$ implies $(iii)$ since
\[
M^+(w \chi_{R^-})(x,t) = \sup_{P^-\ni (x,t)} \dashint_{P^+} w \chi_{R^-} \leq \esssup_{R^-} w \leq C w_{R^+}
\]
for every $(x,t) \in R^-$.
Then we show that $(iii)$ implies $(i)$.
By the Lebesgue differentiation theorem~\cite[Lemma~2.3]{KinnunenMyyryYang2022} and $(iii)$,
we have
\[
w(x,t) 
\leq M^+(w \chi_{R^-})(x,t) \leq C w_{R^+}
\]
for almost every $(x,t) \in R^-$.
By taking the essential supremum over $R^-$, we obtain $(i)$.
\end{proof}

Next we show that the parabolic reverse H\"older classes do not depend on the time lag.

\begin{lemma}
\label{lem:RHItimelag}
Let $1<q\leq\infty$ and $0<\gamma<1$. Then $w$ belongs to $RH_q^+$ if and only if there exists a constant $C$ such that
\[
\biggl( \dashint_{R^-(\gamma)} w^{q} \biggr)^\frac{1}{q} \leq C \dashint_{R^+(\gamma)} w
\]
for every parabolic rectangle $R \subset \mathbb R^{n+1}$.
\end{lemma}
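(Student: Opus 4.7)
The plan is to prove both implications; throughout, fix $1 < q \leq \infty$ and $0 < \gamma < 1$.

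For the forward implication ($w \in RH_q^+$ $\Rightarrow$ lag-$\gamma$ inequality), take $R = R(x, t, L)$. The inclusion $R^-(\gamma) \subset R^-$ with $\lvert R^-(\gamma)\rvert = (1-\gamma)\lvert R^-\rvert$, combined with the $RH_q^+$ hypothesis applied to $R$, immediately yields
\[
\biggl(\dashint_{R^-(\gamma)} w^q\biggr)^{\frac{1}{q}} \leq (1-\gamma)^{-\frac{1}{q}} \biggl(\dashint_{R^-} w^q\biggr)^{\frac{1}{q}} \leq \frac{C}{(1-\gamma)^{1/q}}\, \dashint_{R^+} w.
\]
What remains is to bound $\dashint_{R^+} w$ by a constant multiple of $\dashint_{R^+(\gamma)} w$. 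Splitting $R^+ = R^+(\gamma) \cup S$ with the thin slab $S = Q(x, L) \times (t, t+\gamma L^p)$ reduces the task to estimating $\int_S w$ by a multiple of $\int_{R^+(\gamma)} w$. I tile $Q(x, L)$ spatially by cubes $Q(x_i, \ell)$ and introduce the auxiliary parabolic rectangles $P_i = R(x_i, t+\gamma L^p, \ell)$. For $\ell \in [\gamma^{1/p} L, (1-\gamma)^{1/p} L]$, the lower parts $P_i^-$ cover the corresponding slices of $S$ while the upper parts $P_i^+$ are spatially disjoint subsets of $R^+(\gamma)$. The $RH_q^+$ hypothesis together with H\"older's inequality on each $P_i$ yields $\int_{P_i^-} w \leq C \int_{P_i^+} w$, and summation over $i$ gives the required bound. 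The interval for $\ell$ is nonempty only when $\gamma \leq 1/2$; larger $\gamma$ are reached by a bootstrap, using the just-established lag-$\gamma_0$ condition to derive the lag-$\gamma$ condition for $\gamma_0 < \gamma \leq (1+\gamma_0)/2$, iterating finitely many times.

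For the converse, the strategy is dual. Decompose $R^- = R^-(\gamma) \cup \widetilde S$ with $\widetilde S = Q(x, L) \times (t-\gamma L^p, t)$. The lag-$\gamma$ hypothesis applied to $R$ controls $\dashint_{R^-(\gamma)} w^q$ by a multiple of $\dashint_{R^+} w$ (using $R^+(\gamma) \subset R^+$ to replace $\dashint_{R^+(\gamma)} w$), and the remaining contribution from $\widetilde S$ is handled by covering $\widetilde S$ with lower parts $\widetilde P_i^-(\gamma)$ of smaller parabolic rectangles $\widetilde P_i = R(x_i, \tilde s_i, \ell)$ whose upper parts $\widetilde P_i^+(\gamma)$ fit inside $R^+$. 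A quick geometric check identifies the admissible range $\ell \in [L(\gamma/(1-\gamma))^{1/p}, L/(1+\gamma)^{1/p}]$, nonempty for $\gamma \leq \sqrt{2}-1$, with the same bootstrap handling larger $\gamma$. Assembling the two pieces and extracting a $q$-th root produces $(\dashint_{R^-} w^q)^{1/q} \leq C' \dashint_{R^+} w$.

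The main obstacle throughout is the interaction of the parabolic scaling with the time lag: since each tile's temporal length is the $p$-th power of its spatial side, the lower bound on $\ell$ enforced by slab coverage and the upper bound enforced by target containment both scale with $L$, but their ratio depends on $\gamma$ and collapses as $\gamma \to 1$. Consequently, a one-shot covering succeeds only for $\gamma$ bounded away from $1$, and the iterative bootstrap is what ultimately handles all $\gamma \in (0,1)$, with the final constant depending on $\gamma$, $n$, $p$, $q$, and the hypothesized constant. The case $q = \infty$ runs through the same scheme with essential suprema replacing the $L^q$ averages.
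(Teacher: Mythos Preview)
Your approach is correct and lands on the same conclusion, but it is organized differently from the paper's argument. In both directions the paper works \emph{entirely by partitioning the source set} into small parabolic subrectangles whose companion halves (with the target lag) sit inside the target set: for the reverse implication this is a one-shot covering of $R^-$ by $S_i^-(\gamma)$ with spatial side $L/(1+\gamma)^{1/p}$, which forces every $S_i^+(\gamma)\subset R^+$ and works uniformly for all $0<\gamma<1$; for the forward implication the paper first partitions $R^-(\gamma)$ so that the $S_i^+$ land in a translate $R_0^+(\gamma)$ of $R^-(\gamma)$, and then \emph{chains in time} from $R_0^+(\gamma)$ through $R_1^+(\gamma),\dots,R_N^+(\gamma)=R^+(\gamma)$ by iterating the same covering step $N\sim(1+\gamma)/(1-\gamma)$ times. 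By contrast, you begin each direction with the trivial inclusion ($R^-(\gamma)\subset R^-$ or $R^+(\gamma)\subset R^+$), which leaves a thin slab to be bridged; your bridging step works only for $\gamma$ bounded away from $1$, and you recover large $\gamma$ by a \emph{bootstrap in the parameter $\gamma$} rather than by chaining in time. Both iterations ultimately encode the same obstruction (the ratio $(1+\gamma)/(1-\gamma)$ blows up), but the paper's reverse direction avoids iteration altogether, while your forward-direction bootstrap gives a slightly more elementary first step at the cost of a less explicit final constant. One small remark: in your bootstrap you should apply the original $RH_q^+$ (lag $0$) to the bridging rectangles rather than the just-obtained lag-$\gamma_0$ condition, which is what actually yields the clean range $\gamma\le(1+\gamma_0)/2$ you state.
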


\begin{proof}
Assume that $w\in RH_q^+$.
Let $R \subset \mathbb R^{n+1}$ be a parabolic rectangle with side length $L$.
Choose $N\in\mathbb{N}$ and $0<\beta\leq1$ such that
$1+\gamma = (N + \beta)(1-\gamma)$.
Let
\[
R_0^+(\gamma) = R^-(\gamma) + (0, \beta (1-\gamma) L^p)
\]
and
\[
R_k^+(\gamma) = R^-(\gamma) + (0, (k+\beta) (1-\gamma) L^p)
\] 
for $k=1,\dots,N$.
Note that $R_N^+(\gamma) = R^+(\gamma)$.
Let $\rho = \beta^{1/p} (1-\gamma)^{1/p}$.
We partition $R^-(\gamma)$ into $ \lceil \rho^{-1} \rceil^n \lceil \rho^{-p} \rceil$ subrectangles $S^-_{i}$ with spatial side length $\rho L$ and time length $\rho^p L^p$ such that the overlap of $\{S^-_{i}\}_i$ is bounded by $2^{n+1}$. 
This can be done by dividing each spatial edge of $R^-(\gamma)$ into $\lceil \rho^{-1} \rceil$ equally long 
subintervals with an overlap bounded by $2$, and the time interval of $R^-(\gamma)$ into $\lceil \rho^{-p} \rceil$ equally long subintervals with an overlap bounded by $2$.
We observe that every $S^{+}_i$ is contained in $R^+_0(\gamma)$.
Then $w\in RH_q^+$
implies that there exists a constant $C_1$ such that
\begin{align*}
\biggl( \dashint_{R^-(\gamma)} w^{q} \biggr)^\frac{1}{q} 
&\leq 
\biggl( \sum_i \frac{\lvert S^-_i \rvert}{\lvert R^-(\gamma) \rvert} \dashint_{S^-_i} w^{q} \biggr)^\frac{1}{q} 
\leq
\biggl( \frac{\rho^{n+p}}{1-\gamma} \biggr)^\frac{1}{q}
\sum_i \biggl( \dashint_{S^-_i} w^{q} \biggr)^\frac{1}{q} \\
&\leq 
\bigl( \beta^{\frac{n}{p}+1} (1-\gamma)^\frac{n}{p} \bigr)^\frac{1}{q} 
C_1 \sum_i \dashint_{S^{+}_i} w \\
&= 
\bigl( \beta^{\frac{n}{p}+1} (1-\gamma)^\frac{n}{p} \bigr)^\frac{1}{q} 
C_1 \sum_i \frac{\lvert R^+_0(\gamma) \rvert}{\lvert S^{+}_i \rvert} \frac{1}{\lvert R^+_0(\gamma) \rvert} \int_{S^{+}_i} w \\
&\leq
\bigl( \beta^{\frac{n}{p}+1} (1-\gamma)^\frac{n}{p} \bigr)^{\frac{1}{q}-1}
C_1
2^{n+1} \dashint_{R^{+}_0(\gamma)} w 
= 
C_2
\dashint_{R^{+}_0(\gamma)} w ,
\end{align*}
where $C_2 = \bigl( \beta^{\frac{n}{p}+1} (1-\gamma)^\frac{n}{p} \bigr)^{\frac{1}{q}-1} C_1 2^{n+1}$.

By iterating the previous argument with $1$ in place of $\beta$, we obtain
\begin{align*}
\dashint_{R^{+}_0(\gamma)} w &\leq \biggl( \dashint_{R^{+}_0(\gamma)} w^{q} \biggr)^\frac{1}{q} 
\leq 
\frac{C_1 2^{n+1}}{(1-\gamma)^{\frac{n }{p} (1-\frac{1}{q}) }}
\dashint_{R^{+}_1(\gamma)} w \\
&\leq C_3^N \dashint_{R^{+}_N(\gamma)} w
\leq C_4 \dashint_{R^{+}(\gamma)} w ,
\end{align*}
where $C_3 = (1-\gamma)^{\frac{n }{p} 
(\frac{1}{q}-1)
} C_1 2^{n+1}$
and
$C_4 = \max\{1, C_3^\frac{1+\gamma}{1-\gamma} \}$.
Thus, we conclude that
\[
\biggl( \dashint_{R^-(\gamma)} w^{q} \biggr)^\frac{1}{q} 
\leq 
C_2
\dashint_{R^{+}_0(\gamma)} w
\leq
C_2
C_4 \dashint_{R^{+}(\gamma)} w .
\]
By letting $q\to\infty$, we obtain the same conclusion for $RH^+_\infty$.

Then we prove the other direction.
Let $R \subset \mathbb R^{n+1}$ be a parabolic rectangle with side length $L$.
We partition $R^-$ into $ 2^n \lceil (1+\gamma)/(1-\gamma) \rceil$ subrectangles $S^-_{i}(\gamma)$ with spatial side length $L/(1+\gamma)^\frac{1}{p}$ and time length $(1-\gamma) L^p/(1+\gamma)$ such that the overlap of $\{S^-_{i}(\gamma)\}_i$ is bounded by $2^{n+1}$. 
This can be done by dividing each spatial edge of $R^-$ into 
$\lceil (1+\gamma)^\frac{1}{p} \rceil = 2$  equally long subintervals,
and the time interval of $R^-$ into $\lceil (1+\gamma)/(1-\gamma) \rceil$ equally long subintervals with an overlap bounded by $2$.
We observe that every $S^{+}_i(\gamma)$ is contained in $R^+$.
Then by the assumption, we have
\begin{align*}
\biggl( \dashint_{R^-} w^{q} \biggr)^\frac{1}{q} 
&\leq 
\biggl( \sum_i \frac{\lvert S^-_i(\gamma) \rvert}{\lvert R^- \rvert} \dashint_{S^-_i(\gamma)} w^{q} \biggr)^\frac{1}{q} 
\leq
\biggl( \frac{1-\gamma}{(1+\gamma)^{\frac{n}{p}+1}} \biggr)^\frac{1}{q}
\sum_i \biggl( \dashint_{S^-_i(\gamma)} w^{q} \biggr)^\frac{1}{q} \\
&\leq 
C_1^\frac{1}{q}
C \sum_i \dashint_{S^{+}_i(\gamma)} w 
= 
C_1^\frac{1}{q}
C \sum_i \frac{\lvert R^+ \rvert}{\lvert S^{+}_i(\gamma) \rvert} \frac{1}{\lvert R^+ \rvert} \int_{S^{+}_i(\gamma)} w \\
&\leq C_1^{\frac{1}{q}-1}
C2^{n+1} \dashint_{R^{+}} w ,
\end{align*}
where $C_1 = (1-\gamma)/ (1+\gamma)^{\frac{n}{p}+1} $.
This completes the proof for $1<q<\infty$. Letting $q\to\infty$ in the argument above, we obtain the claim for $q=\infty$.
\end{proof}

\section{Characterizations of $\bigcup_{q>1} RH^+_q$}

This section discusses several characterizations of parabolic reverse H\"older inequalities in terms of conditions that resemble characterizations of the Muckenhoupt $A_\infty$ class 
in the classical setting.
Reverse H\"older classes and Muckenhoupt classes require separate discussion in the parabolic case. The connection between these classes is discussed in Section \ref{section:muckenhoupt}.
The results in this section also hold in the case $p=1$.

\begin{theorem}
\label{thm:RHIchar}
Let $w$ be a weight.
The following conditions are equivalent.
\begin{enumerate}[(i),topsep=5pt,itemsep=5pt]
\item $w\in RH^+_q$ for some $1<q<\infty$.

\item There exist constants $C,\delta >0$ such that
\[
\frac{w(E)}{w(R^+)} 
\leq
C \biggl( \frac{\lvert E \rvert}{\lvert R^- \rvert} \biggr)^\delta 
\]
for every parabolic rectangle $R\subset\mathbb{R}^{n+1}$ and 
measurable set $E \subset R^-$.

\item For every $\beta>0$ there exists $0<\alpha<1$ such that for every parabolic rectangle $R\subset\mathbb{R}^{n+1}$ and every measurable set $E \subset R^-$ for which $\lvert E \rvert < \alpha \lvert R^- \rvert$ we have $w(E) < \beta w(R^+)$.

\item There exist $0<\alpha<1$ and $0<\beta<1/2^{n+p}$ such that for every parabolic rectangle $R\subset\mathbb{R}^{n+1}$ and every measurable set $E \subset R^-$ for which $\lvert E \rvert < \alpha \lvert R^- \rvert$ we have $w(E) < \beta w(R^+)$.

\item There exist $0<\alpha<1$ and $0<\beta<1/2^{n+p}$ such that for every parabolic rectangle $R\subset\mathbb{R}^{n+1}$ we have
\[
w( R^- \cap \{ \alpha w > w_{R^+} \} ) < \beta w( R^+ ) .
\]

\item 
There exists a constant $C$ such that
\[
\int_{R^-} M^+(w \chi_{R^-}) 
\leq 
C \int_{R^+} w
\]
for every parabolic rectangle $R\subset\mathbb{R}^{n+1}$.

\item
There exists a constant $C$ such that
\[
\int_{R^-} w \log^+ \biggl(\frac{w}{w_{R+}}\biggr) \leq C w(R^+)
\]
for every parabolic rectangle $R\subset\mathbb{R}^{n+1}$.
\end{enumerate}
\end{theorem}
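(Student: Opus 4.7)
The plan is to prove the main cycle $(i) \Rightarrow (ii) \Rightarrow (iii) \Rightarrow (iv) \Rightarrow (v) \Rightarrow (i)$ and then attach $(vi)$ and $(vii)$ to the cycle by showing each sits between $(i)$ and one of the intermediate conditions. Throughout I exploit that $|R^-| = |R^+|$, so $w_{R^+} = w(R^+)/|R^-|$.

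The first block of implications is essentially routine. For $(i)\Rightarrow(ii)$ I apply H\"older with exponent $q$ to $w(E) = \int_{R^-} w\chi_E$, obtaining $(ii)$ with $\delta = 1 - 1/q$. The step $(ii)\Rightarrow(iii)$ is handled by choosing $\alpha = (\beta/C)^{1/\delta}$, and $(iii)\Rightarrow(iv)$ is immediate since the former is strictly stronger. For $(iv)\Rightarrow(v)$ I first partition $R^-$ into $\lceil 2/\alpha\rceil$ subsets each of measure below $\alpha|R^-|/2$ and sum the bound from $(iv)$ over the pieces to deduce a forward doubling estimate $w(R^-)\leq C_1 w(R^+)$. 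Writing $E_{\alpha'} = R^- \cap \{\alpha' w > w_{R^+}\}$, Chebyshev then yields $|E_{\alpha'}| \leq \alpha' w(R^-)/w_{R^+} \leq C_1\alpha'|R^-|$; taking $\alpha' = \alpha/(2C_1)$ places $E_{\alpha'}$ within the hypothesis of $(iv)$ and produces $(v)$ with the same $\beta$, preserving the bound $\beta < 1/2^{n+p}$.

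The main obstacle is $(v)\Rightarrow(i)$, a Gehring-type self-improvement. The plan is to iterate $(v)$ through a parabolic Calder\'on--Zygmund stopping time: given $R$ and $k\geq 1$, select maximal subrectangles inside $R^+$ on which the forward average of $w$ exceeds $w_{R^+}/\alpha^k$, so that on the corresponding backward parts $w$ is large only on sets of small measure. Iterating $(v)$ along this tree should yield geometric decay
\[
w\bigl(R^- \cap \{w > w_{R^+}/\alpha^k\}\bigr) \leq \beta^k\, w(R^+),
\]
and the layer-cake identity then gives $\int_{R^-} w^q \leq C|R^-|\,w_{R^+}^q$ for any $q>1$ with $\beta\alpha^{-(q-1)} < 1$, which is $(i)$. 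The genuine difficulty is realizing this stopping-time construction in the parabolic geometry: the selected subrectangles must be oriented so that their forward parts are contained in the preceding $R^+$, the overlap from the parabolic covering must be controlled, and this is precisely where the bound $\beta < 1/2^{n+p}$ in $(iv)$ and $(v)$ becomes essential to absorb the combinatorial overlap factor.

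It remains to attach $(vi)$ and $(vii)$. For $(i)\Rightarrow(vi)$ I combine H\"older with the strong $(q,q)$ bound for $M^+$ to obtain $\int_{R^-} M^+(w\chi_{R^-}) \leq C|R^-|^{1-1/q}\|w\|_{L^q(R^-)} \leq C'w(R^+)$. For $(vi)\Rightarrow(v)$ I use the Lebesgue differentiation bound $w \leq M^+(w\chi_{R^-})$ a.e.\ on $R^-$, combined with Chebyshev applied to the Fujii--Wilson inequality, to control first $|E_\alpha|$ and then $w(E_\alpha)$ by a second use of $(vi)$ on the tail. For $(i)\Rightarrow(vii)$ I apply $\log^+ s \leq s^{q-1}/(q-1)$ for $s\geq 1$ to reduce the $L\log L$ integral to $\int_{R^-}w^q$, which is controlled by $(i)$. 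For $(vii)\Rightarrow(v)$ I rewrite the $L\log L$ integral via layer cake as $\int_1^\infty w(R^-\cap\{w>tw_{R^+}\})\,dt/t$ and use monotonicity of the distribution function to deduce $w(R^-\cap\{w>Tw_{R^+}\}) \leq C w(R^+)/\log T$, so that taking $T = 1/\alpha$ large enough forces the ratio below any chosen $\beta < 1/2^{n+p}$.
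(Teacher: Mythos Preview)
Your treatment of the easy implications $(i)\Rightarrow(ii)\Rightarrow(iii)\Rightarrow(iv)$, and of $(i)\Rightarrow(vi)$, $(i)\Rightarrow(vii)$, is correct and matches the paper. Your argument for $(iv)\Rightarrow(v)$ (via a preliminary doubling bound and Chebyshev) is valid and in fact slightly different from the paper, which instead proves $(ii)\Rightarrow(v)\Rightarrow(iv)$; either route works. Your $(vii)\Rightarrow(v)$ via the layer-cake identity and monotonicity of the distribution function is clean and correct---arguably tidier than the paper's $(vii)\Rightarrow(iii)$, which splits $E$ into $\{w\le\sigma w_{R^+}\}$ and its complement.

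There are, however, two genuine gaps.

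\textbf{The step $(v)\Rightarrow(i)$.} Your sketch (``select maximal subrectangles inside $R^+$ on which the forward average exceeds $w_{R^+}/\alpha^k$, then iterate $(v)$ along the tree to get $\beta^k$ decay'') does not survive the parabolic geometry. If you select subrectangles $S$ with $S^+\subset R^+$ and stopping based on $w_{S^+}$, the corresponding $S^-$ need not cover the superlevel set in $R^-$; if instead you decompose $R^-$ and look at the associated forward halves, these leak outside $R$ and overlap, so $\sum_j w(S_j^+)$ is not controlled by $w(R^+)$ and the iteration does not close. The paper does \emph{not} obtain a clean $\beta^k$ decay. It first reduces to $(iv)$ and then proves a weight--measure estimate of the form
\[
w(R^-\cap\{w>\lambda\})\le c\,\lambda\,\bigl|R\cap\{w>(1-2^{n+p}\beta)\lambda\}\bigr|,\qquad \lambda\ge w_{U^+},
\]
via a parabolic Calder\'on--Zygmund decomposition on $R^-$ whose selection criterion involves \emph{shifted} forward rectangles $U_i^+=R_i^-+(0,(1+\tau)L_i^p)$ with $\tau=1/(2^p-1)$ chosen so that $U_i^+\subset U_{i-1}^-$; a separate ``time-move'' lemma is needed to transport weight forward along these shifts, and this is where the contraposition of $(iv)$ and the threshold $\beta<1/2^{n+p}$ enter. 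The passage from the weight--measure estimate to $(i)$ is then a further iteration with an absorption argument across a chain of half-scale coverings of $R^-$. None of this is captured by the $\beta^k$ heuristic.

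\textbf{The step $(vi)\Rightarrow(v)$.} Chebyshev on the Fujii--Wilson bound together with $w\le M^+(w\chi_{R^-})$ gives $|E_\alpha|\le C\alpha|R^-|$, but your ``second use of $(vi)$ on the tail'' does not yield a bound on $w(E_\alpha)$: the obvious layer-cake attempt produces $\int_{w_{R^+}/\alpha}^\infty |R^-\cap\{w>\lambda\}|\,d\lambda$, and the weak-type control $\lesssim w(R^+)/\lambda$ makes this divergent. The paper does not attempt $(vi)\Rightarrow(v)$ directly; it proves $(vi)\Rightarrow(vii)$ via a \emph{reverse weak-type} inequality $w(R^-\cap\{w>\lambda\})\le c\lambda\,|R^-\cap\{M^+w>\lambda\}|$ for $\lambda\ge w_{R^+}$ (itself requiring a parabolic Calder\'on--Zygmund decomposition and the doubling $w(R^-)\le Cw(R^+)$ that $(vi)$ provides), and then your own $(vii)\Rightarrow(v)$ closes the loop. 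You should reroute through $(vii)$.
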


The proof is presented in the subsections below.

\subsection{Quantitative measure condition}

We show $(i)\Leftrightarrow (ii)$ in Theorem~\ref{thm:RHIchar}.

\begin{theorem}
\label{thm:quantiAinfty}
Let $w$ be a weight. 
Then $w\in RH^+_q$ for some $1<q<\infty$ if and only if there exist constants $C,\delta >0$ such that
\[
\frac{w(E)}{w(R^+)} 
\leq
C \biggl( \frac{\lvert E \rvert}{\lvert R^- \rvert} \biggr)^\delta 
\]
for every parabolic rectangle $R\subset\mathbb{R}^{n+1}$ and 
measurable set $E \subset R^-$.

\end{theorem}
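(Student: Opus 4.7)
The plan is to prove the two implications separately. The direction $(i)\Rightarrow(ii)$ is the easy one and follows directly from H\"older's inequality applied to $\int_{R^-} w \chi_E$, with exponents $q$ and $q/(q-1)$. Since $|R^+|=|R^-|$, a short computation gives the estimate
\[
\frac{w(E)}{w(R^+)} \leq [w]_{RH_q^+} \biggl(\frac{\lvert E\rvert}{\lvert R^-\rvert}\biggr)^{(q-1)/q},
\]
so $(ii)$ holds with $\delta = (q-1)/q$.

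The heart of the theorem is the converse $(ii)\Rightarrow(i)$, which I would prove by a layer cake / distribution function argument that self-improves the measure estimate into an integrability gain. Fix a parabolic rectangle $R$ and set $\lambda_0 := w(R^-)/\lvert R^-\rvert$. Applying $(ii)$ with $E=R^-$ yields the auxiliary inequality $w(R^-)\leq C w(R^+)$, which will be used to absorb a boundary term. For $0<\varepsilon<\delta$ to be chosen, expand
\[
\int_{R^-} w^{1+\varepsilon} = \varepsilon \int_0^\infty \lambda^{\varepsilon-1} w\bigl(R^- \cap \{w>\lambda\}\bigr) \, d\lambda,
\]
split the outer integral at $\lambda_0$, and treat the two pieces separately. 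On $\{\lambda\leq\lambda_0\}$ use the trivial bound $w(R^- \cap \{w>\lambda\})\leq w(R^-)$, which contributes a term $\lambda_0^\varepsilon w(R^-)$. On $\{\lambda>\lambda_0\}$ combine Chebyshev, which gives $\lvert R^-\cap\{w>\lambda\}\rvert \leq w(R^-)/\lambda < \lvert R^-\rvert$, with hypothesis $(ii)$ to obtain
\[
w\bigl(R^- \cap \{w>\lambda\}\bigr) \leq C w(R^+)\bigl(\lambda_0/\lambda\bigr)^\delta.
\]
Integrating and using $w(R^-)\leq C w(R^+)$ then yields $\int_{R^-} w^{1+\varepsilon} \lesssim \lambda_0^\varepsilon w(R^+)$. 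Finally, substituting $\lambda_0 \leq C w(R^+)/\lvert R^-\rvert$, dividing by $\lvert R^-\rvert=\lvert R^+\rvert$, and taking the $(1+\varepsilon)$-th root produces the desired reverse H\"older inequality with exponent $q=1+\varepsilon$.

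The only delicate point is the convergence of the tail integral $\int_{\lambda_0}^\infty \lambda^{\varepsilon-1-\delta}\,d\lambda$, which forces the restriction $\varepsilon<\delta$; this is precisely the mechanism that converts the power-type measure estimate in $(ii)$ into a quantitative $L^{1+\varepsilon}$ gain. No parabolic geometry or covering argument is needed here, because both the hypothesis and the conclusion are localized to a single parabolic rectangle $R$ with its upper and lower parts $R^+,R^-$; the fact that the comparison takes place between $R^-$ and the disjoint forward-in-time piece $R^+$ plays no role beyond the identity $\lvert R^+\rvert=\lvert R^-\rvert$ and the doubling estimate $w(R^-)\leq C w(R^+)$ obtained above.
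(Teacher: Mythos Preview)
Your proof is correct and follows essentially the same strategy as the paper: H\"older for the forward direction, and a Cavalieri/layer-cake argument with a split for the reverse direction. The only cosmetic difference is that you expand $\int_{R^-} w^{1+\varepsilon}$ via the $w$-weighted distribution function $w(R^-\cap\{w>\lambda\})$ and split at $\lambda_0=w_{R^-}$, whereas the paper uses the Lebesgue distribution function $\lvert R^-\cap\{w>\lambda\}\rvert$ (bounding it by a self-referencing application of the hypothesis) and splits at $w_{R^+}$; both routes lead to the same conclusion with comparable constants.
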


\begin{proof}
Assume first that $w\in RH^+_q$.
Let $E$ be a measurable subset of $R^-$.
By H\"older's inequality, we have
\begin{align*}
\frac{w(E)}{w(R^+)} &= \frac{\lvert E \vert}{w(R^+)} \dashint_{E} w \leq \frac{\lvert E \vert}{w(R^+)} \biggl( \dashint_{E} w^q \biggr)^\frac{1}{q} \\
&\leq \frac{\lvert E \vert^{1-\frac{1}{q} }}{w(R^+)} \lvert R^- \rvert^\frac{1}{q} \biggl( \dashint_{R^-} w^q \biggr)^\frac{1}{q} 
\leq \frac{\lvert E \vert^{1-\frac{1}{q} }}{w(R^+)} \lvert R^- \rvert^\frac{1}{q} C \dashint_{R^+} w\\
&= C \lvert E \vert^{1-\frac{1}{q}} \lvert R^- \rvert^{\frac{1}{q}-1} 
\leq C \biggl( \frac{\lvert E \vert}{\lvert R^- \vert} \biggr)^{1-\frac{1}{q}} .
\end{align*}

Then we prove the other direction.
Assume that
\[
\frac{w(E)}{w(R^+)} 
\leq
C \biggl( \frac{\lvert E \rvert}{\lvert R^- \rvert} \biggr)^\frac{1}{q} ,
\]
where $C >0$, $q=\delta^{-1}>0$ and $E$ is a measurable subset of $R^-$.
Since the ratio of the Lebesgue measure of $R^-$ to the Lebesgue measure of $E$ is always greater than or equal to 1, we may assume without loss of generality that the exponent $q$ is strictly greater than 1.
Let $E_\lambda = R^- \cap \{ w > \lambda \}$.
We have $\lvert E_\lambda \rvert \leq w(E_\lambda) / \lambda$.
It follows that
\[
\lvert E_\lambda \rvert \leq \frac{1}{\lambda} w(E_\lambda)
\leq
\frac{C}{\lambda} \biggl( \frac{\lvert E_\lambda \rvert}{\lvert R^- \rvert} \biggr)^\frac{1}{q} w(R^+) ,
\]
and hence we get
\[
\lvert E_\lambda \rvert \leq \frac{C^{q'}}{\lambda^{q'}} \frac{w(R^+)^{q'}}{\lvert R^- \rvert^{q'-1}} ,
\]
where $q'=\frac{q}{q-1}$ is the conjugate exponent of $q$.
Letting $0<\varepsilon<q'-1$ and applying Cavalieri's principle gives
\begin{align*}
\int_{R^-} w^{1+\varepsilon} &= (1+\varepsilon) \int_0^\infty \lambda^{\varepsilon} \lvert R^- \cap \{ w > \lambda \} \rvert \dla \\
&=
(1+\varepsilon) \int_0^{w_{R^+}} \lambda^{\varepsilon} \lvert E_\lambda \rvert \dla + (1+\varepsilon) \int_{ w_{R^+} }^\infty \lambda^{\varepsilon} \lvert E_\lambda \rvert \dla \\
&\leq
\lvert R^- \rvert \biggl( \frac{w(R^+)}{\lvert R^+ \rvert} \biggr)^{1+\varepsilon} + (1+\varepsilon) C^{q'} \frac{w(R^+)^{q'}}{\lvert R^- \rvert^{q'-1}} \int_{w_{R^+}}^\infty \lambda^{\varepsilon-q'} \dla \\
&=
\lvert R^- \vert \biggl( \frac{w(R^+)}{\lvert R^+ \rvert} \biggr)^{1+\varepsilon} + 
\frac{(1+\varepsilon) C^{q'}}{q'-1-\varepsilon} \frac{w(R^+)^{q'}}{\lvert R^- \rvert^{q'-1}} \biggl( \frac{w(R^+)}{\lvert R^+ \rvert} \biggr)^{\varepsilon-q'+1} \\
&= \biggl( 1 + \frac{(1+\varepsilon) C^{q'}}{q'-1-\varepsilon} \biggr) \lvert R^- \vert \biggl( \frac{w(R^+)}{\lvert R^+ \rvert} \biggr)^{1+\varepsilon} .
\end{align*}
Thus, we obtain
\[
\biggl( \int_{R^-} w^{1+\varepsilon} \biggr)^\frac{1}{1+\varepsilon} \leq c \int_{R^+} w ,
\]
where $c^{1+\varepsilon} = 1 + (1+\varepsilon) C^{q'} / (q'-1-\varepsilon)$.
By taking the supremum over all parabolic rectangles, 
we conclude that $w\in RH^+_{1+\varepsilon}$ 
and thus the proof is complete.
\end{proof}

\subsection{Qualitative measure condition}

We show $(i)\Leftrightarrow (iv)$ in Theorem~\ref{thm:RHIchar}.
First we note
that Theorem~\ref{thm:RHIchar}~$(ii)$ implies $(iii)$,
since if $\lvert E \rvert < \alpha \lvert R^- \rvert$, then
\begin{align*}
w(E) \leq C \biggl( \frac{\lvert E \rvert}{\lvert R^- \rvert} \biggr)^\delta w(R^+) \leq C \alpha^\delta w(R^+) ,
\end{align*}
where we can choose $\alpha$ small enough such that $C\alpha^\delta\leq\beta$.
The implication from $(iii)$ to $(iv)$ is immediate.

To prove the reverse implication from $(iv)$ to $(i)$,
we need the following lemma.
We present the version with a time lag for later use.

\begin{lemma}
\label{lemma:timemove}
Let $0\leq\gamma<1$.
Assume that there exist $0<\alpha,\beta<1$ such that for every parabolic rectangle $R$ and every measurable set $E \subset R^-(\gamma)$ for which $\lvert E \rvert < \alpha \lvert R^-(\gamma) \rvert$ we have $w(E) < \beta w(R^+(\gamma))$.
Then we have the following properties.
\begin{enumerate}[(i)]
\item 
For every parabolic rectangle $R$ and every measurable set $E \subset R^-(\gamma)$ for which $w(E) \geq \beta w(R^+(\gamma))$ it holds that $\lvert E \rvert \geq \alpha \lvert R^-(\gamma) \rvert$.
\item
Let $\theta>0$.
For every parabolic rectangle $R$ and $0\leq\eta\leq \theta $
it holds that
\[
w(R^-(\gamma)) \leq C w(R^-(\gamma) + (0, \eta L^p)),
\]
where $C\geq1$ depends on $p, \gamma, \alpha,\beta$ and $\theta $.
\end{enumerate}
\end{lemma}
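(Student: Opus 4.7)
Part (i) is the direct contrapositive of the standing hypothesis: if $w(E) \geq \beta w(R^+(\gamma))$, then the assumption $|E| < \alpha |R^-(\gamma)|$ would force $w(E) < \beta w(R^+(\gamma))$, contradicting the hypothesis, so $|E| \geq \alpha |R^-(\gamma)|$.

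For part (ii), my plan is to build the time comparison from three pieces. First, I would establish a \emph{base estimate}
\[
w(R^-(\gamma)) \leq \frac{\beta(1+\alpha)}{\alpha}\, w(R^+(\gamma))
\]
for every parabolic rectangle $R$ by a layer-cake/threshold argument. Setting $E_\lambda = R^-(\gamma) \cap \{w>\lambda\}$ and $\lambda_0 = \inf\{\lambda : |E_\lambda| \leq \alpha |R^-(\gamma)|\}$, the hypothesis together with right-continuity of $\lambda \mapsto |E_\lambda|$ yields $w(E_{\lambda_0}) \leq \beta w(R^+(\gamma))$ in the limit $\lambda \downarrow \lambda_0$. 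Simultaneously, the set $\{w \geq \lambda_0\}\cap R^-(\gamma)$ has measure at least $\alpha|R^-(\gamma)|$, so any subset $F$ of it with $|F| = \alpha|R^-(\gamma)|-\varepsilon$ satisfies $\lambda_0 |F| \leq w(F) < \beta w(R^+(\gamma))$, and letting $\varepsilon \to 0$ gives $\lambda_0 |R^-(\gamma)| \leq (\beta/\alpha) w(R^+(\gamma))$. Splitting
\[
w(R^-(\gamma)) = w(E_{\lambda_0}) + w(R^-(\gamma)\cap\{w\leq \lambda_0\}) \leq \beta w(R^+(\gamma)) + \lambda_0 |R^-(\gamma)|
\]
and combining the two bounds produces the base estimate.

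Second, I would transfer the base estimate to arbitrary small forward shifts. Writing $R_s = R^-(\gamma) + (0,sL^p)$, for $\delta \in (0,1+\gamma]$ I pick $\kappa \in (0,1]$ with $(1+\gamma)\kappa^p = \delta$ and cover $Q(x,L)$ by subcubes of side $\kappa L$ together with the time interval of $R_0$ by subintervals of length $(1-\gamma)\kappa^p L^p$, with overlap controlled by a dimensional constant exactly as in the proof of Lemma~\ref{lem:RHItimelag}. Each resulting product box is the $-(\gamma)$ part of a parabolic subrectangle of side $\kappa L$, and its $+(\gamma)$ part is the time translate of the box by $(1+\gamma)\kappa^p L^p = \delta L^p$, hence lies inside $R_\delta$. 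Applying the base estimate to each subrectangle and summing, while using the covering on the left and the bounded overlap inside $R_\delta$ on the right, yields $w(R_0) \leq C_1 w(R_\delta)$ with $C_1 = C_1(n,p,\alpha,\beta,\gamma)$.

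Finally, for $\eta \in [0,\theta]$ I would chain the small-shift estimate $k = \lceil \eta/(1+\gamma)\rceil \leq \lceil \theta/(1+\gamma)\rceil$ times, each with step size $\eta/k \leq 1+\gamma$, applied to the translated base rectangles $R(x,t+sL^p,L)$ for $s = 0, \eta/k, \dots, (k-1)\eta/k$. This produces $w(R^-(\gamma)) \leq C_1^k w(R^-(\gamma)+(0,\eta L^p))$, giving (ii) with $C = C_1^{\lceil \theta/(1+\gamma)\rceil}$. The most delicate point is the base estimate: the hypothesis only gives one-sided control of $w(E)$ for subsets of small measure, so the threshold $\lambda_0$ must be chosen precisely at the critical level $\alpha|R^-(\gamma)|$ so that the two inequalities it yields combine into a genuine comparison of $w(R^-(\gamma))$ with $w(R^+(\gamma))$.
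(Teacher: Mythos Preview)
Your proof is correct. Part~(i) matches the paper exactly; for part~(ii) both your argument and the paper's follow the same two-phase plan---first establish $w(R^-(\gamma))\le C_0\,w(R^+(\gamma))$ for every parabolic rectangle, then propagate this by a subrectangle covering and iteration---but the details differ in two places.

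For the \emph{base estimate}, the paper's argument is considerably shorter than yours: it simply partitions $R^-(\gamma)$ into $\lceil\alpha^{-1}\rceil$ pairwise disjoint sets, each of measure at most $\alpha\lvert R^-(\gamma)\rvert$, applies the hypothesis to each piece, and sums, giving $w(R^-(\gamma))\le\lceil\alpha^{-1}\rceil\beta\,w(R^+(\gamma))$. Your layer-cake/threshold argument works too and gives the comparable constant $\beta(1+\alpha)/\alpha$, but the partition trick avoids the limiting procedure entirely. For the \emph{propagation}, the paper fixes the subrectangle scale once (depending only on $\gamma$, via a dyadic choice $L/2^m$) and then shifts each subrectangle forward roughly $N\approx c\,\theta$ times by applying the base estimate repeatedly along a chain; you instead adapt the subrectangle scale $\kappa$ to the step size $\delta$, obtain a single-step bound $w(R_0)\le C_1 w(R_\delta)$ with $C_1$ independent of $\delta$, and then chain at the macro level. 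Both organizations yield constants exponential in $\theta$ and are equally valid; the paper's version tracks the constant slightly more explicitly, while yours isolates the $\delta$-uniform small-shift estimate as a reusable intermediate step.
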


\begin{proof}

$(i)$ This is simply the contraposition of the qualitative measure condition.

$(ii)$
We first prove the claim for $\eta=1$. Partition $R^-(\gamma)$ into $\lceil \alpha^{-1} \rceil$ pairwise disjoint sets $E_i$ with measure at most $\alpha \lvert R^-(\gamma) \rvert$.
Then
the qualitative measure condition implies $w(E_i) < \beta w(R^+(\gamma))$,
and thus
\begin{equation}
\begin{split}
\label{eq:lowerupperpart_bound}
w(R^-(\gamma)) 
&= \sum_i w(E_i) \leq \sum_i \beta w(R^+(\gamma))\\
&= \lceil \alpha^{-1} \rceil \beta w(R^+(\gamma))
\leq C_0 w(R^+(\gamma)) ,
\end{split}
\end{equation}
where $C_0 = \max\{1, 2\beta/\alpha \}$.
This finishes the proof in the case $\eta=1$.

It is enough to prove the claim for $\eta=\theta$.
The general case $0\leq\eta\leq \theta $ follows from the fact that the constant $C$ in the claim is an increasing function of $\theta $.
Let $\theta>0$ and $R \subset \mathbb{R}^{n+1}$ be a fixed parabolic rectangle of side length $L$.
Choose $m \in \mathbb{N}$ such that
\begin{equation}
\label{partitionbound}
\frac{(1+\gamma) L^p}{2^{pm}} \leq 
\frac{(1-\gamma) L^p}{2}
< \frac{(1+\gamma) L^p}{2^{p(m-1)}} .
\end{equation}
We partition $R^-(\gamma)$ into subrectangles $R^-_{0,i}(\gamma)$ with spatial side length $L /2^m$ and time length $(1-\gamma) L^p /2^{pm}$ such that the overlap of $\{R^-_{0,i}(\gamma)\}_i$ is bounded by $2$. This can be done by dividing each spatial edge of $R^-(\gamma)$ into $2^m$ equally long pairwise disjoint intervals, and the time interval of $R^-(\gamma)$ into $\lceil 2^{pm} \rceil$ equally long subintervals such that their overlap is bounded by $2$.

Our plan is to shift every rectangle $R^-_{0,i}(\gamma)$ forward in time by multiple times of $(1+\gamma) L^p / 2^{pm}$ until the shifted rectangles are contained in $R^-(\gamma) + (0, \theta L^p )$.
To this end, choose
$N \in \mathbb{N}$ such that
\[
(N-1) \frac{(1+\gamma) L^p}{2^{pm}}  < \theta L^p \leq N \frac{(1+\gamma) L^p}{2^{pm}} .
\]
We first move every rectangle $R^-_{0,i}(\gamma)$ forward in time by $(N-1) (1+\gamma) L^p / 2^{pm}$. Then we shift once more by the distance $(1+\gamma) L^p / 2^{pm}$ those rectangles that are not yet subsets of $R^-(\gamma) + (0, \theta L^p )$.
Denote so obtained shifted rectangles by $R^-_{N,i}(\gamma)$. 
Observe that the choice of $N$ 
and \eqref{partitionbound}
ensures that all shifted rectangles $R^-_{N,i}(\gamma)$ are contained in $R^-(\gamma) + (0, \theta L^p) $.
By the construction and the bounded overlap of $R^-_{0,i}(\gamma)$, the overlap of $R^-_{N,i}(\gamma)$ is bounded by $4$.
Then we apply~\eqref{eq:lowerupperpart_bound} for $R^-_{0,i}(\gamma)$ and $R^+_{0,i}(\gamma)$
and continue applying~\eqref{eq:lowerupperpart_bound} for shifted rectangles total of $N$ times to obtain
\[
w(R^-_{0,i}(\gamma)) \leq C_0 w(R^+_{0,i}(\gamma)) \leq C_0^N w(R^-_{N,i}(\gamma)) ,
\]
where
\[
C_0^{N} \leq C_0^{1+ 2^{pm} \theta / (1+\gamma) } \leq C_0^{1 + 2^{p+1} \theta /(1-\gamma)} = C .
\]
Therefore, we conclude that
\begin{align*}
w(R^-(\gamma)) 
&\leq \sum_i w(R^-_{0,i}(\gamma)) \leq 
C\sum_i w(R^-_{N,i}(\gamma)) \\
&\leq 4 
Cw(R^-(\gamma) + (0, \theta L^p))
\end{align*}
by $R^-_{N,i}(\gamma)\subset R^-(\gamma) + (0, \theta L^p)$ and the bounded overlap of $R^-_{N,i}(\gamma)$.
\end{proof}

\begin{lemma}
\label{weightmeasure-estimate}
Let $w$ be a weight.
Assume that there exist 
$0<\alpha<1$ and $0<\beta<1/2^{n+p}$
such that for every parabolic rectangle $R$ and every measurable set $E \subset R^-$ for which $\lvert E \rvert < \alpha \lvert R^- \rvert$ we have $w(E) < \beta w(R^+)$.
Then there exists $c=c(p,\alpha,\beta)$ such that for every parabolic rectangle $R=R(x,t,L) \subset \mathbb{R}^{n+1}$ and $\lambda \geq w_{U^+}$ we have
\[
w( R^{-} \cap \{ w > \lambda \}  )  \leq c \lambda \lvert R \cap \{ w > (1-2^{n+p}\beta) \lambda \} \rvert ,
\]
where
$U^+ = R^+ + (0, \tau L^p)$ with 
$\tau = 1/(2^p-1)$. 
\end{lemma}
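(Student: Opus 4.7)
The plan is to execute a Calder\'on--Zygmund type stopping-time decomposition of $R$ at level $\lambda$, producing a pairwise disjoint family $\{P_i\}$ of parabolic subrectangles of $R$ whose lower parts cover $R^-\cap\{w>\lambda\}$ up to a null set, and on each of which the $w$-average of a forward-shifted upper part $\tilde P_i$ first exceeds $\lambda$. The specific shift $\tau=1/(2^p-1)$ is chosen so that, at every subdivision step, the forward-shifted upper parts of the child parabolic rectangles nest inside the forward-shifted upper part of their parent. Since $\lambda\geq w_{U^+}$, the root $R$ does not trigger the stopping rule, so the decomposition is well-defined. The qualitative measure condition is then used on each $P_i$ to show that a definite fraction of $P_i$ already lies in the slightly smaller level set $\{w>(1-2^{n+p}\beta)\lambda\}$, and summing over $i$ yields the conclusion.

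The first step is to run the stopping time: subdivide $R$ into parabolic subrectangles of half the side length using a hybrid spatial--temporal partition compatible with the parabolic scaling, check whether the $w$-average on the forward-shifted upper part exceeds $\lambda$, and recurse until selection occurs. Maximality yields $\lambda<w_{\tilde P_i}\leq 2^{n+p}\lambda$ for each $i$, where the upper bound reflects the parent-to-child volume ratio. Combined with Lemma~\ref{lemma:timemove}~$(ii)$, which transfers $w$-mass from $P_i^-$ to $\tilde P_i$ at a cost $c=c(p,\alpha,\beta)$, and with the parabolic Lebesgue differentiation theorem,
\[
w(R^-\cap\{w>\lambda\})\leq\sum_i w(P_i^-)\leq c\lambda\sum_i\lvert P_i\rvert,
\]
where the family $\{P_i\}$ is pairwise disjoint and contained in $R$.

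Next I would use the qualitative measure condition on each $P_i$ to extract a subset of size at least $\alpha'\lvert P_i\rvert$ lying in the lower level set. Set $F_i=P_i\cap\{w>(1-2^{n+p}\beta)\lambda\}$ and $\tilde F_i=\tilde P_i\cap\{w>(1-2^{n+p}\beta)\lambda\}$. The stopping criterion together with the trivial bound $w(\tilde P_i\setminus\tilde F_i)\leq(1-2^{n+p}\beta)\lambda\lvert\tilde P_i\rvert$ gives $w(\tilde F_i)\geq 2^{n+p}\beta\lambda\lvert\tilde P_i\rvert$. Applying the contrapositive of the qualitative measure condition to a parabolic rectangle realising $\tilde P_i$ as a lower or upper part upgrades this $w$-mass bound to a Lebesgue-measure bound $\lvert\tilde F_i\rvert\geq c_1\lvert\tilde P_i\rvert$, and a further application of Lemma~\ref{lemma:timemove} then transfers this to $\lvert F_i\rvert\geq\alpha'\lvert P_i\rvert$ with $\alpha'>0$ depending on $\alpha,\beta,p$. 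Summing over the disjoint family and combining with the previous display gives
\[
w(R^-\cap\{w>\lambda\})\leq\frac{c\lambda}{\alpha'}\lvert R\cap\{w>(1-2^{n+p}\beta)\lambda\}\rvert,
\]
which is the claim.

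The main obstacle is executing the parabolic stopping-time decomposition coherently, since parabolic scaling is incompatible with a purely dyadic subdivision once $p$ is not an integer, and one must arrange the forward shifts at each scale so that the shifted upper parts of children lie inside the shifted upper part of their parent. The value $\tau=1/(2^p-1)$ is the unique shift that makes this nesting work from the root $U^+=R^++(0,\tau L^p)$ downward, and the strict hypothesis $\beta<1/2^{n+p}$ is tuned to the $2^{n+p}$ volume ratio produced by one level of subdivision; coordinating these constants, together with the auxiliary use of Lemma~\ref{lemma:timemove} to move $w$-mass forward in time, is the delicate combinatorial core of the proof.
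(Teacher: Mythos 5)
Your skeleton matches the paper's: a parabolic Calder\'on--Zygmund stopping time whose selection criterion tests the $w$-average of a forward-shifted upper part, with $\tau=1/(2^p-1)$ tuned so that $(1+\tau)L_i^p=\tau L_{i-1}^p$, coverage of $R^-\cap\{w>\lambda\}$ via the Lebesgue differentiation theorem, Lemma~\ref{lemma:timemove} to move $w$-mass forward in time, and the stopping criterion to show that $\{w>(1-2^{n+p}\beta)\lambda\}$ occupies a definite fraction of each tested set. But two points are off, and the second is a genuine gap. First, the nesting you describe is wrong: the child's shifted upper part $U^+_i=R^-_i+(0,(1+\tau)L_i^p)$ sits inside the parent's shifted \emph{lower} part $U^-_{i-1}=R^-_{i-1}+(0,\tau L_{i-1}^p)$, not inside the parent's tested set $U^+_{i-1}$ (the two tested sets are disjoint in time). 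Hence the two-sided bound $\lambda<w_{\tilde P_i}\le 2^{n+p}\lambda$ does not follow ``by maximality''; any upper bound on $w_{\tilde P_i}$ must pass through the hypothesis, via $\tilde P_i\subset U^-_{i-1}$ and \eqref{eq:lowerupperpart_bound}, at the cost of a factor $\max\{1,2\beta/\alpha\}$. The same (correct) inclusion into the parent's lower part is exactly what lets the contrapositive of the measure condition, Lemma~\ref{lemma:timemove}~$(i)$, be applied to $E=\tilde P_i\cap\{w>(1-2^{n+p}\beta)\lambda\}$ against the parent's small average $w_{U^+_{i-1}}\le\lambda$; ``realising $\tilde P_i$ as a lower or upper part'' of some unspecified rectangle gives no control of the companion upper part and so does not yield $\lvert\tilde F_i\rvert\gtrsim\lvert\tilde P_i\rvert$ by itself.

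Second, and more seriously, the endgame does not work: you claim a further application of Lemma~\ref{lemma:timemove} transfers $\lvert\tilde F_i\rvert\gtrsim\lvert\tilde P_i\rvert$ to $\lvert F_i\rvert\ge\alpha'\lvert P_i\rvert$ with $F_i\subset P_i$. Lemma~\ref{lemma:timemove} moves $w$-mass forward in time; it provides no Lebesgue-measure lower bound for a superlevel set backward in time, and no such bound is true under the one-sided hypothesis: $w$ may be essentially zero on $P_i$ while the selection is triggered by large values in the shifted set $\tilde P_i$, which is not contained in $P_i$ when the $P_i$ are the pairwise disjoint selected pieces (it protrudes above their tops by $\tau L_i^p$), and if you enlarge $P_i$ to contain it you lose disjointness. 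So the measure of $\{w>(1-2^{n+p}\beta)\lambda\}$ has to be collected in the shifted sets $U^+_{i,j}$ themselves, and these overlap without any bounded-overlap control across generations. This is precisely why the paper extracts a Vitali-type maximal pairwise disjoint subfamily $\{\widetilde U^+_{i,j}\}$ satisfying \eqref{plussubset}, giving $\sum_{i,j}\lvert S^-_{i,j}\rvert\le c_1\sum_{i,j}\lvert\widetilde U^+_{i,j}\rvert$ as in \eqref{subsetcubes}, and only then sums the disjoint good sets inside $R_0$. Your argument omits this disjointification entirely, and without it (or a substitute covering argument) the sum over $i$ overcounts and the final display $w(R^-\cap\{w>\lambda\})\le c\lambda\lvert R\cap\{w>(1-2^{n+p}\beta)\lambda\}\rvert$ does not follow.
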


\begin{proof}

Let $R_0=R(x_0,t_0,L) = Q(x_0,L) \times (t_0-L^p, t_0+L^p)$
and $\lambda \geq w_{U^+_0}$.
Without loss of generality, we may assume that $\alpha < 1/2^{n+p}$.
Denote $S^-_0 = R^-_0$. The time length of $S^-_0$ is $l_t(S^-_0) = L^p$.
We construct a parabolic Calder\'on--Zygmund decomposition at level $\lambda$.
We partition $S^-_0$ by dividing each spatial edge into $2$ equally long intervals. If
\[
\frac{l_t(S_{0}^-)}{\lfloor 2^{p} \rfloor} < \frac{L^p}{2^{p}},
\]
we divide the time interval of $S^-_0$ into $\lfloor 2^{p} \rfloor$ equally long intervals. Otherwise, we divide the time interval of $S^-_0$ into $\lceil 2^{p} \rceil$ equally long intervals.
We obtain subrectangles $S^-_1$ of $S^-_0$ with spatial side length $L_1 = l_x(S^-_1) = l_x(S^-_0)/2 = L / 2$
and time length either
\[
l_t(S^-_1) = \frac{l_t(S^-_0)}{\lfloor 2^{p} \rfloor} = \frac{L^p}{\lfloor 2^{p} \rfloor} \quad \text{or} \quad l_t(S^-_1) = \frac{L^p}{\lceil 2^{p} \rceil} .
\]
For every $S^-_1$, there exists a unique rectangle $R^-_1$ with spatial side length $L_1 = L / 2$ 
and time length $L_1^p = L^p / 2^{p}$
such that $R^-_1$ has the same bottom as $S^-_1$,
unless the top of $S^-_1$ intersects with the top of $S^-_0$ in which case we choose $R^-_1$ that has the same top as $S^-_1$.
This way every $R_1^-$ is contained in $S^-_0$ and their overlap is bounded by $3$.
Consider the corresponding 
$U^+_1 = R^-_1 + (0, (1+\tau) L_1^p)$.
We select those rectangles $S^-_1$ for which
\[
\frac{w(U^+_1)}{\lvert U^+_1 \rvert} = \dashint_{U^+_1} w > \lambda
\]
and denote the obtained collection by $\{ S^-_{1,j} \}_j$.
If
\[
\frac{w(U^+_1)}{\lvert U^+_1 \rvert} = \dashint_{U^+_1} w \leq \lambda ,
\]
we subdivide $S^-_1$ in the same manner as above
and select all those subrectangles $S^-_2$ for which
\[
\frac{w(U^+_2)}{\lvert U^+_2 \rvert} = \dashint_{U^+_2} w > \lambda
\]
to obtain family $\{ S^-_{2,j} \}_j$.
We continue this selection process recursively.
At the $i$th step, we partition unselected rectangles $S^-_{i-1}$ by dividing each spatial side into $2$ equally long intervals. If
\begin{equation}
\label{RHI:JNproof_eq1}
\frac{l_t(S_{i-1}^-)}{\lfloor 2^{p} \rfloor} < \frac{L^p}{2^{pi}},
\end{equation}
we divide the time interval of $S^-_{i-1}$ into $\lfloor 2^{p} \rfloor$ equally long intervals.  
Otherwise, if
\begin{equation}
\label{RHI:JNproof_eq2}
\frac{l_t(S_{i-1}^-)}{\lfloor 2^{p} \rfloor} \geq \frac{L^p}{2^{pi}},
\end{equation}
we divide the time interval of $S^-_{i-1}$ into $\lceil 2^{p} \rceil$ equally long intervals.
We obtain subrectangles $S^-_i$. 
For every $S^-_i$, there exists a unique rectangle $R^-_i$ with spatial side length $L_i = L / 2^{i}$
and time length $L_i^p = L^p / 2^{pi}$
such that $R^-_i$ has the same bottom as $S^-_i$,
unless the top of $S^-_i$ intersects with the top of $S^-_{i-1}$ in which case we choose $R^-_i$ that has the same top as $S^-_i$.
This way every $R_i^-$ is contained in $S^-_{i-1}$ and their overlap is bounded by $3$.
Consider the corresponding 
$U^+_i = R^-_i + (0, (1+\tau) L_i^p)$.
Select those $S^-_i$ for which 
\begin{equation}
\label{eq:selectcrit1}
\frac{w(U^+_i)}{\lvert U^+_i \rvert} = \dashint_{U^+_i} w  > \lambda
\end{equation}
and denote the obtained collection by $\{ S^-_{i,j} \}_j$.
If 
\[
\frac{w(U^+_i)}{\lvert U^+_i \rvert} = \dashint_{U^+_i} w \leq \lambda ,
\]
we continue the selection process in $S^-_i$.
In this manner we obtain a collection $\{S^-_{i,j} \}_{i,j}$ of pairwise disjoint rectangles.

We show that
\begin{equation}\label{eq:tsidel}
\frac{1}{2} \frac{L^p}{2^{pi}} \leq l_t(S^-_i) \leq \frac{L^p}{2^{pi}}
\end{equation}
for every $S^-_i$.
Observe that if \eqref{RHI:JNproof_eq1} holds, then we have
\[
l_t(S_i^-) = \frac{l_t(S^-_{i-1})}{\lfloor 2^{p} \rfloor} < \frac{L^p}{2^{pi}}.
\]
On the other hand, if \eqref{RHI:JNproof_eq2} holds, then
\[
l_t(S_i^-) = \frac{l_t(S^-_{i-1})}{\lceil 2^{p} \rceil} \leq \frac{l_t(S^-_{i-1})}{2^{p}} \leq \dots \leq \frac{L^p}{2^{pi}} .
\]
This gives an upper bound in \eqref{eq:tsidel}.

Suppose that \eqref{RHI:JNproof_eq2} is satisfied at the $i$th step.
Then we have a lower bound for the time length of $S_i^-$, since
\[
l_t(S^-_i) = \frac{l_t(S_{i-1}^-)}{\lceil 2^{p} \rceil} \geq \frac{\lfloor 2^{p} \rfloor}{\lceil 2^{p} \rceil} \frac{L^p}{2^{pi}} \geq \frac{1}{2} \frac{L^p}{2^{pi}} .
\]
On the other hand, if \eqref{RHI:JNproof_eq1} is satisfied, then
\[
l_t(S^-_i) = \frac{l_t(S_{i-1}^-)}{\lfloor 2^{p} \rfloor} \geq \frac{l_t(S_{i-1}^-)}{ 2^{p}}.
\]
In this case, \eqref{RHI:JNproof_eq2} has been satisfied at an earlier step $i'$ with $i'< i$.
We obtain
\[
l_t(S^-_i) \geq \frac{l_t(S_{i-1}^-)}{ 2^{p}} \geq \dots \geq \frac{l_t(S_{i'}^-)}{ 2^{p(i-i')}} \geq \frac{1}{2} \frac{L^p}{ 2^{pi}}
\]
by using the lower bound for $S_{i'}^-$.
This proves \eqref{eq:tsidel}.

We show that $U^+_i$ is contained in $U^-_{i-1} = R^-_{i-1} + (0, \tau L_{i-1}^p)$ 
for a fixed rectangle $S^-_{i-1}$ and for every subrectangle $S^-_i \subset S^-_{i-1}$, where $S^-_{i-1}$ is the parent rectangle of $S^-_i$.
Since  $\tau = 1/(2^p-1)$ and $L_i = L / 2^{i}$, we have $(1+\tau) L_i^p= \tau L_{i-1}^p$.
By using the fact that $R^-_i \subset S^-_{i-1} \subset R^-_{i-1}$, we obtain
\begin{align*}
U^+_i &= R^-_i + (0, (1+\tau) L_i^p)
\subset R^-_{i-1} + (0, \tau L_{i-1}^p)
=
U^-_{i-1} .
\end{align*}

We have a collection $\{ S^-_{i,j} \}_{i,j}$ of pairwise disjoint rectangles. 
However, the rectangles in the corresponding collection $\{ U^+_{i,j} \}_{i,j}$ may overlap. 
Thus, we replace it by a maximal subfamily $\{ \widetilde{U}^+_{i,j} \}_{i,j}$ of pairwise disjoint rectangles, which is constructed in the following way.
For every $i\in\mathbb{N}$,
we may extract a maximal disjoint subcollection $\{ \widehat{U}^+_{i,j} \}_{j}$ from 
$\{ U^+_{i,j} \}_{j}$
such that 
for every $U^+_{i,j}$ there is $\widehat{U}^+_{i,j}$ with
\[
\text{pr}_x(U^+_{i,j}) \subset \text{pr}_x(\widehat{U}^+_{i,j}) 
\quad\text{and}\quad  
\text{pr}_t(U^+_{i,j}) \subset 3 \text{pr}_t(\widehat{U}^+_{i,j}) .
\]
Here pr$_x$ denotes the projection to $\mathbb R^n$ and pr$_t$ denotes the projection to the time axis.
Choose $\{ \widehat{U}^+_{1,j} \}_{j}$ and denote it by $\{ \widetilde{U}^+_{1,j} \}_j$.
Then consider the collection $\{ \widehat{U}^+_{2,j} \}_{j}$ where each $\widehat{U}^+_{2,j}$ either intersects some $\widetilde{U}^+_{1,j}$ or does not intersect any $\widetilde{U}^+_{1,j}$. 
Select the rectangles $\widehat{U}^+_{2,j}$, that do not intersect any $\widetilde{U}^+_{1,j}$, and denote the obtained collection by $\{ \widetilde{U}^+_{2,j} \}_j$.
At the $i$th step, choose those $\widehat{U}^+_{i,j}$ that do not intersect any previously selected $\widetilde{U}^+_{i',j}$, $i' < i$.
Hence, we obtain a collection $\{ \widetilde{U}^+_{i,j} \}_{i,j}$ of pairwise disjoint rectangles.
Observe that for every $U^+_{i,j}$ there exists $\widetilde{U}^+_{i',j}$ with $i' < i$ such that
\begin{equation}
\label{plussubset}
\text{pr}_x(U^+_{i,j}) \subset \text{pr}_x(\widetilde{U}^+_{i',j}) \quad \text{and} \quad \text{pr}_t(U^+_{i,j}) \subset 3 \text{pr}_t(\widetilde{U}^+_{i',j}) .
\end{equation}
Note that $S^-_{i,j}$ is spatially contained in $U^+_{i,j}$, that is, $\text{pr}_x S^-_{i,j}\subset \text{pr}_x U^+_{i,j}$.
In the time direction, we have
\begin{equation}
\label{minusplussubset}
\text{pr}_t(S^-_{i,j}) 
\subset ( 3+2\tau ) \text{pr}_t(U^+_{i,j}) ,
\end{equation}
since
\[
( 4 + 2\tau ) \frac{l_t(U^+_{i,j})}{2} 
=
(2+\tau) L_i^p .
\]
Therefore, by~\eqref{plussubset} and~\eqref{minusplussubset}, it holds that
\begin{equation}
\label{subsetcubes}
\sum_{i,j} \lvert S^-_{i,j} \rvert 
= 
\Big\lvert \bigcup_{i,j} S^-_{i,j} \Big\rvert 
\leq 
c_1 \sum_{i,j} \lvert \widetilde{U}^+_{i,j} \rvert 
\quad\text{with}\quad
c_1 = 3 ( 3+2\tau ).
\end{equation}

Let $\sigma = 2^{n+p}\beta$.
It holds that
\begin{align*}
w(U^+_{i,j} \cap \{ w \leq (1-\sigma) w_{U^+_{i,j}} \})
\leq
(1-\sigma) w_{U^+_{i,j}} \lvert U^+_{i,j} \rvert
=
(1-\sigma) w(U^+_{i,j})
\end{align*}
from which we obtain
\[
w(U^+_{i,j} \cap \{ w > (1-\sigma) w_{U^+_{i,j}} \}) \geq \sigma w(U^+_{i,j}) .
\]
From the selection criterion \eqref{eq:selectcrit1}, we get
\[
w(U^+_{i-1,j}) \leq \lambda \lvert U^+_{i-1,j} \rvert = 2^{n+p} \lambda \lvert U^+_{i,j} \rvert < 2^{n+p} w(U^+_{i,j}) .
\]
By the last two estimates,
we have
\[
w(U^+_{i,j} \cap \{ w > (1-\sigma) w_{U^+_{i,j}} \}) > \frac{\sigma}{2^{n+p}} w(U^+_{i-1,j}) = \beta w(U^+_{i-1,j}) .
\]
Recall that $U^+_{i,j} \subset U^-_{i-1,j}$.
Thus, we may apply Lemma~\ref{lemma:timemove}~$(i)$ to obtain
\[
\lvert U^+_{i,j} \cap \{ w > (1-\sigma) w_{U^+_{i,j}} \} \rvert \geq \alpha \lvert U^+_{i-1,j} \rvert
\]
and since $w_{U^+_{i,j}}>\lambda$ we have
\begin{equation}
\label{superlevelestimate}
\lvert U^+_{i,j} \cap \{ w > (1-\sigma) \lambda \} \rvert
\geq
\lvert U^+_{i,j} \cap \{ w > (1-\sigma) w_{U^+_{i,j}} \} \rvert \geq \alpha \lvert U^+_{i-1,j} \rvert .
\end{equation}

If $(x,t) \in S^-_0 \setminus \bigcup_{i,j} S^-_{i,j}$, then there exists a sequence of subrectangles $S^-_l$ containing $(x,t)$ such that 
\[
\frac{w(U^+_l)}{\lvert U^+_l \rvert} = \dashint_{U^+_l} w \leq \lambda
\]
and $\lvert S^-_l \rvert \to 0$ as $l \to \infty$.
The Lebesgue differentiation theorem~\cite[Lemma~2.3]{KinnunenMyyryYang2022}
implies that $w(x,t) \leq \lambda$
for almost every $(x,t) \in S^-_0 \setminus \bigcup_{i,j} S^-_{i,j}$.
It follows that
\[
S^-_0 \cap \{ w > \lambda \} \subset \bigcup_{i,j} S^-_{i,j}
\]
up to a set of measure zero.
By using this with Lemma~\ref{lemma:timemove}~$(ii)$ for $\theta=1+\tau$, the selection criterion~\eqref{eq:selectcrit1},~\eqref{subsetcubes} and~\eqref{superlevelestimate}, 
we obtain
\begin{align*}
w( S^-_0 \cap \{ w > \lambda \} ) 
&\leq 
\sum_{i,j} w( S^-_{i,j} )
\leq 
\sum_{i,j} w( R^-_{i-1,j} )
\leq 
C \sum_{i,j} w( U^+_{i-1,j} )\\
&\leq 
C \lambda \sum_{i,j} \lvert U^+_{i-1,j} \rvert 
\leq
2^{n+p+1} C \lambda \sum_{i,j} \lvert S^-_{i,j} \rvert \\
&\leq
2^{n+p+1} c_1 C \lambda \sum_{i,j} \lvert \widetilde{U}^+_{i,j} \rvert 
\leq
2 c_1 C \lambda \sum_{i,j} \lvert \widetilde{U}^+_{i-1,j} \rvert 
\\
&\leq
2 c_1 C \alpha^{-1} \lambda \sum_{i,j} \lvert \widetilde{U}^+_{i,j} \cap \{ w > (1-\sigma) \lambda \} \rvert
\\
&\leq
2 c_1 C \alpha^{-1} \lambda \lvert R_0 \cap \{ w > (1-\sigma) \lambda \} \rvert .
\end{align*}
This completes the proof.
\end{proof}

The following theorem states that the qualitative measure condition implies the parabolic reverse H\"older inequality.

\begin{theorem}
Let $w$ be a weight.
Assume that there exist 
$0<\alpha<1$ and $0<\beta<1/2^{n+p}$
such that for every parabolic rectangle $R$ and every measurable set $E \subset R^-$ for which $\lvert E \rvert < \alpha \lvert R^- \rvert$ we have $w(E) < \beta w(R^+)$.
Then $w\in RH^+_q$ for some $1<q<\infty$.

\end{theorem}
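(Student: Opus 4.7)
The plan is to convert the good-$\lambda$ inequality of Lemma \ref{weightmeasure-estimate} into a Gehring type higher integrability estimate, from which $w\in RH^+_{1+\varepsilon}$ for some $\varepsilon>0$ follows directly.

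Fix a parabolic rectangle $R$, set $\lambda_0=w_{U^+}$ with $U^+=R^+ + (0,\tau L^p)$, $\tau=1/(2^p-1)$, and let $a=1-2^{n+p}\beta\in(0,1)$. Lemma \ref{weightmeasure-estimate} gives $w(R^-\cap\{w>t\})\leq c\,t\,\lvert R\cap\{w>at\}\rvert$ for $t\geq\lambda_0$. First I would integrate this against $\varepsilon t^{\varepsilon-1}\,dt$ via Cavalieri's principle
\[
\int_{R^-}w^{1+\varepsilon}=\varepsilon\int_0^\infty t^{\varepsilon-1}\,w(R^-\cap\{w>t\})\,dt,
\]
splitting the outer integral at $t=\lambda_0$. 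On $[0,\lambda_0]$ the trivial bound $w(R^-\cap\{w>t\})\leq w(R^-)$ contributes at most $\lambda_0^\varepsilon w(R^-)$. On $(\lambda_0,\infty)$ the good-$\lambda$ inequality, the substitution $s=at$, and the elementary bound $\int_{a\lambda_0}^\infty s^\varepsilon \lvert R\cap\{w>s\}\rvert\,ds\leq \tfrac{1}{1+\varepsilon}\int_R w^{1+\varepsilon}$ yield
\[
\int_{R^-}w^{1+\varepsilon}\leq \lambda_0^{\varepsilon}w(R^-)+\frac{c\varepsilon}{a^{1+\varepsilon}(1+\varepsilon)}\int_R w^{1+\varepsilon}.
\]

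Next I would choose $\varepsilon$ so small that $\eta:=c\varepsilon/(a^{1+\varepsilon}(1+\varepsilon))<1/2$, decompose $\int_R=\int_{R^-}+\int_{R^+}$, and absorb the $R^-$ part, producing
\[
\int_{R^-}w^{1+\varepsilon}\leq \frac{\lambda_0^{\varepsilon}w(R^-)}{1-\eta}+\frac{\eta}{1-\eta}\int_{R^+}w^{1+\varepsilon}.
\]
Using \eqref{eq:lowerupperpart_bound} in Lemma \ref{lemma:timemove} to replace $w(R^-)$ by a multiple of $w(R^+)$, and a further application of Lemma \ref{lemma:timemove}~(ii) to control $\lambda_0=w_{U^+}$ by a constant multiple of $w_{R^+}$, this becomes
\[
\int_{R^-}w^{1+\varepsilon}\leq C\,w_{R^+}^{\varepsilon}w(R^+)+K\int_{R^+}w^{1+\varepsilon}
\]
with $K=\eta/(1-\eta)<1$. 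The remaining term is then handled by iteration: applying the same estimate to the time-shifted rectangle $R_1=R+(0,L^p)$ (so that $R_1^-=R^+$) and repeating produces a geometric series with ratio $K<1$; a preliminary truncation $w\mapsto w\wedge M$ together with monotone convergence as $M\to\infty$ makes the iteration rigorous, and Lemma \ref{lemma:timemove}~(ii) ensures that each term of the series is controlled by $w_{R^+}^{\varepsilon}w(R^+)$. The resulting inequality $\int_{R^-}w^{1+\varepsilon}\leq C\,w_{R^+}^{\varepsilon}w(R^+)$ is equivalent to $(\dashint_{R^-}w^{1+\varepsilon})^{1/(1+\varepsilon)}\leq C\dashint_{R^+}w$, that is, $w\in RH^+_{1+\varepsilon}$.

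The main obstacle is the asymmetry already present in Lemma \ref{weightmeasure-estimate}: its right-hand side is the Lebesgue measure of $R\cap\{w>at\}$ over the entire rectangle $R$ rather than over $R^-$, so the clean absorption that powers the classical Euclidean Gehring argument is not immediately available. Bridging this gap forces the iteration forward in time together with the careful use of Lemma \ref{lemma:timemove}~(ii). A secondary difficulty is that the forward-in-time doubling only provides the one-sided bound $w_{R^+}\leq C\lambda_0$, so controlling $\lambda_0^{\varepsilon}$ by $w_{R^+}^{\varepsilon}$ requires an additional chaining argument through rectangles whose upper halves coincide with $U^+$.
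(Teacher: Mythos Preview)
Your argument follows the paper's approach through Cavalieri's principle and absorption, and arrives correctly at an inequality of the shape
\[
\int_{R^-}w_k^{1+\varepsilon}\leq C\,\lambda_0^{\varepsilon}w(R^-)+K\int_{R^+}w_k^{1+\varepsilon},\qquad K<1.
\]
The gap is in the iteration step. You propose to iterate by applying the same estimate to $R_1=R+(0,L^p)$, then $R_2=R+(0,2L^p)$, and so on, producing a series $\sum_j K^j\,w_{R_j^+}^{\varepsilon}\,w(R_j^+)$ that you claim Lemma~\ref{lemma:timemove}~(ii) controls by $w_{R^+}^{\varepsilon}w(R^+)$. But Lemma~\ref{lemma:timemove}~(ii) gives only the \emph{forward} bound $w(R^+)\leq C\,w(R_j^+)$; it says nothing in the backward direction $w(R_j^+)\leq C\,w(R^+)$, and indeed no such bound is available under the hypotheses. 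Consequently the series has no reason to be bounded by a constant multiple of $w_{R^+}^{\varepsilon}w(R^+)$, and truncation does not help: the truncated series is finite for each $k$, but its bound depends on $k$, so monotone convergence yields nothing. The same obstruction blocks your proposed ``chaining'' to replace $\lambda_0=w_{U^+}$ by $w_{R^+}$: this would again require $w_{U^+}\leq C\,w_{R^+}$, i.e.\ backward doubling.

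The paper circumvents this by iterating with \emph{shrinking} rectangles rather than translated ones: at step $i$ it covers $R_{i-1,j}^+$ by $M=2^{n+1}$ rectangles $R_{i,j}^-$ of spatial side $L/2^{i/p}$ and time length $L^p/2^i$, all of which (and their associated $U_{i,j}^+$) remain inside the fixed $R_0$. This containment gives $w(U_{i,j}^+)\leq w(R_0)$ directly, with no appeal to doubling, and the shrinking factor $|U_{i,j}^+|^{-\varepsilon}=2^{(\frac{n}{p}+1)i\varepsilon}|R_0|^{-\varepsilon}$ is absorbed into the geometric ratio by taking $\varepsilon$ small. The final bound is then in terms of $w_{R_0}$, which one converts to $w_{R_0^+}$ via the one-sided estimate $w_{R_0^-}\leq C\,w_{R_0^+}$. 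The essential point you are missing is that in the parabolic one-sided setting the iteration must stay inside a fixed region; shifting forward without shrinking escapes $R_0$ and forces you to invoke a backward control you do not have.
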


\begin{proof}
Let $R \subset \mathbb R^{n+1}$ be a parabolic rectangle.
Let $\varepsilon>0$ to be chosen later.
We use the same notation as in the statement of Lemma~\ref{weightmeasure-estimate}.
Hence, 
for $\lambda \geq w_{U^+}$ we have
\[
w( R^{-} \cap \{ w > \lambda \}  )  \leq c \lambda \lvert R \cap \{ w > \sigma \lambda \} \rvert ,
\]
where $\sigma=1-2^{n+p}\beta$ and
$U^+ = R^+ + (0, \tau L^p)$ with 
$\tau = 1/(2^p-1)$. 
We show that this implies the corresponding inequality for the truncated weight $w_k = \min\{w,k\}$, $k\in\mathbb{Z}$, that is,
\begin{equation}
\label{eq:qualiRH_trunc}
w( R^{-} \cap \{ w_k > \lambda \}  )  \leq c \lambda \lvert R \cap \{ w_k > \sigma \lambda \} \rvert .
\end{equation}
If $\lambda\geq k$, then $\{ w_k > \lambda \} = \emptyset $ and thus the estimate holds.
On the other hand,
if $\lambda < k$,
then $\{ w_k > \lambda \} = \{ w > \lambda \}$ and $\{ w_k > \sigma \lambda \} = \{ w > \sigma \lambda \}$.
Hence, \eqref{eq:qualiRH_trunc} holds true.

Applying \eqref{eq:qualiRH_trunc} with Cavalieri's principle and Lemma~\ref{lemma:timemove}~$(ii)$ for $\theta=1+\tau$ (with the constant $C$), we obtain
\begin{align*}
\int_{R^-} w_k^{1+\varepsilon} &\leq \varepsilon \int_0^\infty \lambda^{\varepsilon-1} w( R^{-} \cap \{ w_k > \lambda \}  ) \dla \\
&= \varepsilon \int_0^{w_{U^+}} \lambda^{\varepsilon-1} w( R^{-} \cap \{ w_k > \lambda \}  ) \dla\\
&\qquad + \varepsilon \int_{w_{U^+}}^\infty \lambda^{\varepsilon-1} w( R^{-} \cap \{ w_k > \lambda \}  ) \dla \\
&\leq w( R^{-} ) \varepsilon \int_0^{w_{U^+}} \lambda^{\varepsilon-1} \dla + c \varepsilon \int_{w_{U^+}}^\infty \lambda^{\varepsilon} \lvert R \cap \{ w_k > \sigma \lambda \} \rvert \dla \\
&\leq w( R^{-} ) w_{U^+}^\varepsilon + \frac{c \varepsilon}{\sigma^{1+\varepsilon}}  \int_0^\infty \lambda^{\varepsilon} \lvert R \cap \{ w_k > \lambda \} \rvert \dla \\
&\leq C \lvert U^{+} \rvert w_{U^+}^{1+\varepsilon} + \frac{c}{\sigma^{1+\varepsilon}} \frac{\varepsilon}{1+\varepsilon} \int_{R} w_k^{1+\varepsilon} .
\end{align*}
By choosing $\varepsilon>0$ to be small enough, we can absorb the integral over $R^-$ of the second term to the left-hand side to get
\begin{align*}
\biggl( 1- \frac{c}{\sigma^{1+\varepsilon}} \frac{\varepsilon}{1+\varepsilon} \biggr) \int_{R^-} w_k^{1+\varepsilon} \leq C \lvert U^{+} \rvert w_{U^+}^{1+\varepsilon} + \frac{c}{\sigma^{1+\varepsilon}} \frac{\varepsilon}{1+\varepsilon} \int_{R^+} w_k^{1+\varepsilon} .
\end{align*}
Hence, we have
\begin{equation}
\label{cavalieri_iteration}
\int_{R^-} w_k^{1+\varepsilon} \leq c_0 \lvert U^{+} \rvert w_{U^+}^{1+\varepsilon} + c_1 \varepsilon \int_{R^+} w_k^{1+\varepsilon} ,
\end{equation}
where
\[
c_0 = \frac{C(1+\varepsilon) }{1-(c \sigma^{-1-\varepsilon} -1) \varepsilon} 
\quad \text{and} \quad 
c_1 = \frac{c \sigma^{-1-\varepsilon} }{1-(c \sigma^{-1-\varepsilon} -1) \varepsilon} .
\]

Fix $R_0 = Q(x_0,L) \times (t_0 - L^p, t_0 + L^p) \subset \mathbb R^{n+1}$.
We cover $R^-_0$ by $M = 2^{n+1}$ rectangles $R^-_{1,j}$ with spatial side length $l_x = L/2^{1/p}$ and time length $l_t = L^p / 2$. This can be done by dividing each spatial edge of $R^-_0$ into two equally long intervals that may overlap each other, and the time interval of $R^-_0$ into two equally long pairwise disjoint intervals.
Observe that the overlap of $R^-_{1,j}$ is bounded by $M/2 = 2^n$.
Then consider $R^+_{1,j}$ and cover it in the same way as before by $M$ rectangles $R^-_{2,j}$ with spatial side length $l_x = L/2^{2/p}$ and time length $l_t = L^p / 2^2$.
At the $i$th step, cover $R^+_{i-1,j}$ by $M$ rectangles $R^-_{i,j}$ with spatial side length $l_x = L/2^{i/p}$ and time length $l_t = L^p / 2^i$ such that their overlap is bounded by $M/2$.
We note that every $R_{i,j}$ and corresponding $U^+_{i,j}$ is contained in $R_0$.
By iterating~\eqref{cavalieri_iteration} we obtain
\begin{align*}
\int_{R^-_0} w_k^{1+\varepsilon} &\leq \sum_{j=1}^{M} \int_{R^-_{1,j}} w_k^{1+\varepsilon}  
\leq 
\sum_{j=1}^{M} c_0 \lvert U^{+}_{1,j} \rvert w_{U^+_{1,j}}^{1+\varepsilon} + \sum_{j=1}^{M} c_1 \varepsilon \int_{R^+_{1,j}} w_k^{1+\varepsilon} \\
&\leq 
c_0 \sum_{j=1}^M \lvert U^{+}_{1,j} \rvert w_{U^+_{1,j}}^{1+\varepsilon} + c_1 \varepsilon \sum_{j=1}^{M^2} \int_{R^-_{2,j}} w_k^{1+\varepsilon} \\
&\leq 
c_0 \sum_{j=1}^M \lvert U^{+}_{1,j} \rvert w_{U^+_{1,j}}^{1+\varepsilon} + c_1 \varepsilon \sum_{j=1}^{M^2} \biggl( c_0 \lvert U^+_{2,j}\rvert w_{U^+_{2,j}}^{1+\varepsilon} + c_1 \varepsilon \int_{R^+_{2,j}} w_k^{1+\varepsilon} \biggr) \\
&= 
c_0 \sum_{j=1}^M \lvert U^{+}_{1,j} \rvert w_{U^+_{1,j}}^{1+\varepsilon} + c_0 c_1 \varepsilon \sum_{j=1}^{M^2} \lvert U^+_{2,j}\rvert w_{U^+_{2,j}}^{1+\varepsilon} + (c_1 \varepsilon)^2 \sum_{j=1}^{M^2} \int_{R^+_{2,j}} w_k^{1+\varepsilon} \\
&\leq 
c_0 \sum_{i=1}^N \biggl( (c_1 \varepsilon)^{i-1} \sum_{j=1}^{M^i} \lvert U^+_{i,j} \rvert w^{1+\varepsilon}_{U^+_{i,j}} \biggr) + (c_1 \varepsilon)^N \sum_{j=1}^{M^N} \int_{R^+_{N,j}} w_k^{1+\varepsilon} \\
&\leq c_0 \sum_{i=1}^N \biggl( (c_1 \varepsilon)^{i-1} \sum_{j=1}^{M^i} \lvert U^+_{i,j} \rvert w^{1+\varepsilon}_{U^+_{i,j}} \biggr) + \biggl( c_1 \varepsilon \frac{M}{2} \biggr)^N \int_{R_0} w_k^{1+\varepsilon} \\
&= I + II .
\end{align*}
We observe that $II$ tends to zero if $\varepsilon < \frac{2}{c_1 M} = \frac{1}{c_1 2^n}$ as $N \to \infty$.
Since
\[
\lvert U^+_{i,j} \rvert^{-\varepsilon} = L^{-(n+p)\varepsilon} 2^{(\frac{n}{p}+1)i \varepsilon } =  2^{1+\varepsilon} L^{n+p} 2^{(\frac{n}{p}+1)i \varepsilon } \lvert R_{0} \rvert^{-(1+\varepsilon)},
\]
for the inner sum of the first term $I$ we have 
\begin{align*}
\sum_{j=1}^{M^i} \lvert U^+_{i,j} \rvert w^{1+\varepsilon}_{U^+_{i,j}} 
&= \sum_{j=1}^{M^i} \lvert U^+_{i,j} \rvert^{-\varepsilon} \biggl( \int_{U^+_{i,j}} w \biggr)^{1+\varepsilon}\\
&\leq 2^{1+\varepsilon} L^{n+p} 2^{(\frac{n}{p}+1)i \varepsilon } \biggl(\frac M2\biggr)^i w^{1+\varepsilon}_{R_0} .
\end{align*}
Thus, it follows that
\begin{align*}
I \leq c_0 2^{1+\varepsilon} L^{n+p} w^{1+\varepsilon}_{R_0}  \sum_{i=1}^N  (c_1 \varepsilon)^{i-1} 2^{(\frac{n}{p}+1)i \varepsilon } \biggl(\frac M2\biggr)^i .
\end{align*}
We estimate the sum by
\begin{align*}
\sum_{i=1}^N  (c_1 \varepsilon)^{i-1} 2^{(\frac{n}{p}+1)i \varepsilon } \biggl(\frac M2\biggr)^i
&= 2^{(\frac{n}{p}+1) \varepsilon } \frac{M}{2} \sum_{i=0}^{N-1} \biggl( c_1 \varepsilon 2^{(\frac{n}{p}+1) \varepsilon } \frac{M}{2} \biggr)^i \\
&\leq 2^{(\frac{n}{p}+1) \varepsilon } \frac{M}{2} \frac{1}{1-c_1 \varepsilon 2^{(\frac{n}{p}+1) \varepsilon } \frac{M}{2} } \\
&= \frac{2^{(\frac{n}{p}+1) \varepsilon +n }}{1-c_1 \varepsilon 2^{(\frac{n}{p}+1) \varepsilon + n } } ,
\end{align*}
whenever $\varepsilon$ is small enough, for example
\[
\varepsilon < \frac{1}{c_1 2^{\frac{n}{p}+1} M } = \frac{1}{c_1 2^{\frac{n}{p}+1+n} } .
\]
Then it holds that
\begin{align*}
\int_{R^-_0} w_k^{1+\varepsilon} &\leq c_0 2^{1+\varepsilon} L^{n+p} w^{1+\varepsilon}_{R_0} \frac{ 2^{(\frac{n}{p}+1) \varepsilon +n } }{1- c_1 \varepsilon 2^{(\frac{n}{p}+1) \varepsilon +n } }
\end{align*}
for small enough $\varepsilon$.
Since $w_{R^-_0} \leq C w_{R^+_0}$ for some $C=C(\alpha,\beta)$
by \eqref{eq:lowerupperpart_bound} in the proof of Lemma~\ref{lemma:timemove}~$(ii)$,
we conclude that
\begin{align*}
\biggl( \dashint_{R^-_0} w_k^{1+\varepsilon} \biggr)^\frac{1}{1+\varepsilon} &\leq c_2 \dashint_{R_0} w = \frac{c_2}{2} \dashint_{R^-_0} w + \frac{c_2 }{2} \dashint_{R^+_0} w \leq \frac{c_2 }{2} (C+1) \dashint_{R^+_0} w ,
\end{align*}
where
\[
c_2 = 2 \biggl( c_0 \frac{ 2^{(\frac{n}{p}+1) \varepsilon +n } }{1- c_1 \varepsilon 2^{(\frac{n}{p}+1) \varepsilon +n}} \biggr)^\frac{1}{1+\varepsilon} .
\]
The claim follows from the monotone convergence theorem as $k \to \infty$.
\end{proof}

\subsection{Superlevel measure condition}
We show $(ii)\Rightarrow (v) \Rightarrow (iv)$ in Theorem~\ref{thm:RHIchar}.
We start with $(ii)$ implies $(v)$.

\begin{theorem}
Let $w$ be a weight.
Assume that
there exist constants $C,\delta >0$ such that
\[
\frac{w(E)}{w(R^+)} 
\leq
C \biggl( \frac{\lvert E \rvert}{\lvert R^- \rvert} \biggr)^\delta
\]
for every parabolic rectangle $R\subset\mathbb{R}^{n+1}$ and 
measurable set $E \subset R^-$.
Then there exist $0<\alpha<1$ and $0<\beta<1/2^{n+p}$ such that for every parabolic rectangle $R$ we have
\[
w( R^- \cap \{ \alpha w > w_{R^+} \} ) < \beta w( R^+ ) .
\]
\end{theorem}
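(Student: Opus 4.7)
The plan is to estimate $|E|$ where $E = R^- \cap \{\alpha w > w_{R^+}\}$ in terms of $|R^-|$, and then feed this bound into the quantitative measure hypothesis (ii) to obtain a bound on $w(E)$ that is smaller than $\beta w(R^+)$ when $\alpha$ is chosen sufficiently small.

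First I would observe that applying hypothesis (ii) with the choice $E = R^-$ gives $w(R^-) \leq C w(R^+)$. Next, on the set $E = R^- \cap \{\alpha w > w_{R^+}\}$ the pointwise bound $w > w_{R^+}/\alpha$ holds, so
\[
\frac{w_{R^+}}{\alpha} |E| \leq \int_E w \leq w(R^-) \leq C w(R^+) = C w_{R^+} |R^+| = C w_{R^+} |R^-|,
\]
which rearranges to $|E|/|R^-| \leq \alpha C$. Substituting this estimate back into hypothesis (ii) yields
\[
\frac{w(E)}{w(R^+)} \leq C \biggl( \frac{|E|}{|R^-|} \biggr)^\delta \leq C (\alpha C)^\delta = C^{1+\delta} \alpha^\delta.
\]

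To finish, I would fix $\beta = 1/2^{n+p+1}$ (so that the strict inequality $\beta < 1/2^{n+p}$ holds) and then choose $\alpha$ small enough that $C^{1+\delta} \alpha^\delta < \beta$ and $\alpha < 1$, e.g.\ any $\alpha$ with
\[
0 < \alpha < \min\bigl\{ 1,\, (\beta/C^{1+\delta})^{1/\delta} \bigr\}.
\]
With these choices the displayed conclusion of (v) follows immediately for every parabolic rectangle $R$. There is no serious obstacle here: the argument is a one-step reduction from the quantitative bound (ii) to the superlevel estimate (v), using only the trivial $L^\infty$--$L^1$ bound $w \geq w_{R^+}/\alpha$ on $E$ together with the doubling-type estimate $w(R^-) \leq C w(R^+)$ that (ii) provides as a special case.
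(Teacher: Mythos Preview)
Your argument is correct. It differs slightly from the paper's route, so a brief comparison is worthwhile.

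The paper bounds $\lvert E\rvert$ in terms of $w(E)$ itself via Chebyshev, $\lvert E\rvert < \alpha\, w(E)/w_{R^+}$, and then feeds this back into the hypothesis to obtain the self-referential inequality
\[
\frac{w(E)}{w(R^+)} < C\Bigl(\alpha\,\frac{w(E)}{w(R^+)}\Bigr)^\delta,
\]
which is solved to give $w(E) < C^{1/(1-\delta)}\alpha^{\delta/(1-\delta)} w(R^+)$. This bootstrap yields a bound that decays faster in $\alpha$ (exponent $\delta/(1-\delta)>\delta$), but it tacitly requires $\delta<1$; the paper handles this elsewhere by noting that one may always decrease $\delta$.

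Your route first extracts the crude consequence $w(R^-)\le C w(R^+)$ of the hypothesis (taking $E=R^-$), uses it together with Chebyshev to bound $\lvert E\rvert/\lvert R^-\rvert \le \alpha C$ directly, and then applies the hypothesis once more. This avoids the self-referential step entirely, works for every $\delta>0$ without adjustment, and is arguably more transparent, at the cost of a slightly weaker dependence on $\alpha$ in the final constant. Since only \emph{some} admissible pair $(\alpha,\beta)$ is required, the weaker constant is immaterial.
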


\begin{proof}

Denote $E = R^- \cap \{ \alpha w > w_{R^+} \} $.
We have $\lvert E \rvert < \alpha w(E)/w_{R^+}$.
Thus, 
the assumption
implies that
\[
\frac{w(E)}{w(R^+)} 
\leq
C \biggl(  \frac{\lvert E \rvert}{\lvert R^- \rvert} \biggr)^\delta 
<
C \biggl( \alpha \frac{w(E)}{w(R^+)} \biggr)^\delta ,
\]
from which 
we get
\[
w(E) < C^\frac{1}{1-\delta} \alpha^\frac{\delta}{1-\delta} w(R^+) .
\]
We finish the proof by
choosing $\alpha$ small enough such that
\[
\beta = C^\frac{1}{1-\delta} \alpha^\frac{\delta}{1-\delta} < \frac{1}{2^{n+p}} .
\]
\end{proof}

Next we show 
that
$(v)$ implies $(iv)$
in Theorem~\ref{thm:RHIchar}.

\begin{theorem}
Let $w$ be a weight.
Assume that
there exist $0<\alpha<1$ and $0<\beta<1/2^{n+p}$ such that for every parabolic rectangle $R$ we have
\[
w( R^- \cap \{ \alpha w > w_{R^+} \} ) < \beta w( R^+ ) .
\]
Then there exist $0<\alpha'<1$ and $0<\beta'<1/2^{n+p}$ such that for every parabolic rectangle $R$ and every measurable set $E \subset R^-$ for which $\lvert E \rvert < \alpha' \lvert R^- \rvert$ we have $w(E) < \beta' w(R^+)$.

\end{theorem}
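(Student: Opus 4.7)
The plan is to split the set $E$ into the part where $w$ is large relative to the average $w_{R^+}$ and the part where $w$ is controlled by this average, then estimate each piece separately. Concretely, given a measurable $E \subset R^-$ with $|E| < \alpha' |R^-|$, I would write
\[
E = E_1 \cup E_2, \qquad E_1 = E \cap \{\alpha w > w_{R^+}\}, \qquad E_2 = E \cap \{\alpha w \leq w_{R^+}\},
\]
where $\alpha$ is the constant given by the hypothesis, and $\alpha'$ will be chosen at the end.

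On $E_1$, the containment $E_1 \subset R^- \cap \{\alpha w > w_{R^+}\}$ together with the superlevel measure condition directly gives
\[
w(E_1) \leq w\bigl(R^- \cap \{\alpha w > w_{R^+}\}\bigr) < \beta\, w(R^+).
\]
On $E_2$, the pointwise bound $w \leq w_{R^+}/\alpha$ and the identity $|R^-| = |R^+|$ (since both are obtained by splitting $R$ along the time midslice) yield
\[
w(E_2) \leq \frac{w_{R^+}}{\alpha}\,|E| < \frac{w_{R^+}}{\alpha}\,\alpha'\, |R^-| = \frac{\alpha'}{\alpha}\, w(R^+).
\]
Adding the two bounds gives $w(E) < \bigl(\beta + \alpha'/\alpha\bigr)\, w(R^+)$.

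The only thing left is to choose $\alpha'$ so that the total constant stays below $1/2^{n+p}$. Since the hypothesis provides $\beta < 1/2^{n+p}$, set $\varepsilon = 1/2^{n+p} - \beta > 0$ and pick any $\alpha' \in (0,1)$ with $\alpha' < \alpha\,\varepsilon/2$; then $\beta' := \beta + \alpha'/\alpha < \beta + \varepsilon/2 < 1/2^{n+p}$, as required. There is no serious obstacle here — the argument is purely a two-piece decomposition, and the only subtlety is keeping track of the quantitative threshold $1/2^{n+p}$, which is preserved because $\beta$ already lies strictly below it and we have total freedom in making $\alpha'$ small.
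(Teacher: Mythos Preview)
Your proof is correct and follows essentially the same approach as the paper: the same decomposition $E = (E\cap\{\alpha w > w_{R^+}\}) \cup (E\cap\{\alpha w \le w_{R^+}\})$, the same two estimates, and the same choice of $\beta' = \beta + \alpha'/\alpha$ with $\alpha'$ taken small enough that $\beta' < 1/2^{n+p}$. The only cosmetic difference is that the paper imposes $\alpha' < (1/2^{n+p}-\beta)\alpha$ directly, whereas you halve this threshold; the arguments are otherwise identical.
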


\begin{proof}
Let $E \subset R^-$ be a measurable set such that $ \lvert E \rvert < \alpha' \lvert R^- \rvert $ where $\alpha' < (1/2^{n+p}-\beta)\alpha$.
It follows that
\begin{align*}
w( E ) &\leq w( E \cap \{\alpha w > w_{R^+} \} ) + w( E \cap \{\alpha w \leq w_{R^+} \} ) \\
&\leq \beta w( R^+ ) + \frac{w_{R^+}}{\alpha} \lvert E \rvert 
= \biggl( \beta + \frac{1}{\alpha} \frac{\lvert E \rvert}{\lvert R^+ \rvert} \biggr) w(R^+) \\
&< \biggl( \beta + \frac{\alpha'}{\alpha} \biggr) w(R^+) 
= \beta' w(R^+),
\end{align*}
where $\beta' = \beta + \frac{\alpha'}{\alpha} < 1/2^{n+p}$.
\end{proof}

\subsection{Fujii--Wilson condition}
We show $(i)\Rightarrow (vi) \Rightarrow (vii) \Rightarrow (iii)$ in Theorem~\ref{thm:RHIchar}.
We begin with the boundedness of the parabolic maximal function on $L^q$.

\begin{lemma}
\label{lem:MfbddLq}
Let $1<q\leq \infty$.
Assume that  $f\in L^1_{\mathrm{loc}}(\mathbb{R}^{n+1})$.
Then there exists a constant $c$ such that
\begin{align*}
\int_{\mathbb{R}^{n+1}} (M^+f)^q \leq
c \int_{\mathbb{R}^{n+1}} \lvert f \rvert^q .
\end{align*}

\end{lemma}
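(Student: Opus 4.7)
The plan is to combine the trivial $L^\infty$ bound with a weak-type $(1,1)$ estimate and then invoke Marcinkiewicz interpolation. The endpoint $q=\infty$ is immediate: for any parabolic rectangle $R$ with $R^-\ni(x,t)$, $\dashint_{R^+}|f|\leq \norm{f}_{L^\infty}$, so $M^+f(x,t)\leq\norm{f}_{L^\infty}$.

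For the weak-type $(1,1)$ estimate I would proceed as follows. Fix $\lambda>0$ and a compact set $K\subset\{M^+f>\lambda\}$. For each $(x,t)\in K$ select a parabolic rectangle $R(x,t)$ with $(x,t)\in R^-(x,t)$ and
\[
\int_{R^+(x,t)}\lvert f\rvert > \lambda \lvert R^+(x,t)\rvert.
\]
Since $\lvert R^+\rvert=\lvert R^-\rvert$, summing such lower bounds over a suitable subfamily would give
\[
\lambda \sum_i \lvert R^-_i\rvert \leq \sum_i \int_{R^+_i}\lvert f\rvert .
\]
The goal is therefore to extract a subfamily $\{R_i\}$ such that $K\subset\bigcup_i R^-_i$ while the forward parts $\{R^+_i\}$ have bounded overlap. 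This is a Vitali/Besicovitch-type selection adapted to the parabolic geometry: one orders the selected rectangles by decreasing side length, and when a new candidate $R(x,t)$ is tested, one either keeps it or discards it in favour of an already chosen rectangle whose $R^-$ still covers $(x,t)$ but whose $R^+$ does not add significantly to the overlap count. The asymmetry between $R^-$ and $R^+$ (the two lie on opposite sides of the center time) is what makes this step the main obstacle, because one cannot simply reuse the standard Besicovitch covering theorem for cubes; instead, the selection has to exploit that if $R_i^+$ and $R_j^+$ intersect and $l_x(R_i)\geq l_x(R_j)$, then the time direction forces $R_j^+$ to lie in a controlled dilate of $R_i^+$. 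This yields bounded overlap of $\{R_i^+\}$, and combined with the previous inequality gives
\[
\lvert K\rvert \leq \sum_i \lvert R^-_i\rvert \leq \frac{c}{\lambda}\int_{\mathbb{R}^{n+1}}\lvert f\rvert.
\]
Taking the supremum over compact $K$ gives the weak-type $(1,1)$ inequality.

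With the weak $(1,1)$ bound and the trivial $L^\infty$ bound in hand, I would invoke the Marcinkiewicz interpolation theorem for the sublinear operator $M^+$ to conclude that $M^+$ is bounded on $L^q(\mathbb{R}^{n+1})$ for every $1<q<\infty$, with constant $c$ depending on $n,p,q$. The case $q=\infty$ is the trivial one already discussed. The decisive point of the argument is the covering lemma in the middle step; the rest is standard.
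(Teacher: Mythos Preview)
Your overall scheme---trivial $L^\infty$ bound, weak-type $(1,1)$ via a covering argument, then Marcinkiewicz interpolation---is exactly what the paper does. The difference lies in the covering step, where you make life harder than necessary.

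You try to select a subfamily so that the \emph{lower} halves $R_i^-$ cover $K$ while the \emph{upper} halves $R_i^+$ have bounded overlap, and you flag this asymmetric selection as ``the main obstacle.'' But the paper avoids this obstacle entirely by applying the ordinary Vitali covering lemma to the \emph{full} parabolic rectangles $R_z$. Since $z\in R_z^-\subset R_z$, the family $\{R_z\}$ covers $E=\{M^+f>\lambda\}$; Vitali then gives a countable pairwise disjoint subfamily $\{R_i\}$ with $E\subset\bigcup_i 5R_i$. Because the $R_i$ are pairwise disjoint, so are their upper halves $R_i^+$, and the estimate
\[
\lvert E\rvert \le \sum_i \lvert 5R_i\rvert = 5^{n+p}\cdot 2\sum_i \lvert R_i^+\rvert
\le \frac{2\cdot 5^{n+p}}{\lambda}\sum_i\int_{R_i^+}\lvert f\rvert
\le \frac{2\cdot 5^{n+p}}{\lambda}\int_{\mathbb{R}^{n+1}}\lvert f\rvert
\]
follows immediately. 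No Besicovitch-type argument or analysis of how $R_i^+$ and $R_j^+$ interact is needed; disjointness of $R_i^+$ is inherited for free from disjointness of $R_i$. Your sketch of the selection mechanism is plausible but never actually carried out, so as written it is a gap---one that disappears once you observe the simpler route.
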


\begin{proof}
Let $E = \{ M^+f > \lambda \}$.
For every $z \in E$ there exists a parabolic rectangle $R_{z}$ such that 
$z \in R^-_{z}$ and
\[
\dashint_{R^+_{z}} \lvert f \rvert > \lambda .
\]
By a similar argument to the Vitali covering theorem, 
we obtain
a countable collection $\{R_i\}_i$ of pairwise disjoint parabolic rectangles such that
\[
E \subset \bigcup_{z\in E} R_z \subset \bigcup_{i=1}^\infty 5R_i .
\]
Thus, we have
\begin{align*}
\lvert E \rvert &\leq \sum_i \lvert 5 R_i \rvert = 5^{n+p} \sum_i \lvert R_i \rvert 
= 5^{n+p} 2 \sum_i \lvert R_i^+ \rvert \\
&\leq
\frac{5^{n+p} 2}{\lambda} \sum_i \int_{R^+_{i}} \lvert f \rvert
\leq
\frac{5^{n+p} 2}{\lambda} \int_{\mathbb{R}^{n+1}} \lvert f \rvert .
\end{align*}
In other words, $M^+f$ is bounded from $L^{1}$ to $L^{1,\infty}$.
Moreover, we observe that $M^{+}f$ is bounded 
on $L^{\infty}$ 
since
\[
\norm{M^{+} f}_{L^\infty(\mathbb{R}^{n+1})} \leq \norm{f}_{L^\infty(\mathbb{R}^{n+1})} .
\]
The Marcinkiewicz interpolation theorem implies that $M^{+}f$ is bounded 
on $L^{q}$,
particularly
\begin{align*}
\int_{\mathbb{R}^{n+1}} (M^{+}f)^q &\leq
\frac{ q 2^{q+1}  5^{n+p} }{q-1}  
\int_{\mathbb{R}^{n+1}} \lvert f \rvert^q .
\end{align*}
\end{proof}

The next theorem states that the parabolic reverse H\"older inequality implies the parabolic Fujii--Wilson condition.

\begin{theorem}
Let $1<q<\infty$.
Assume that $w\in RH^+_q$.
Then there exists a constant $C$ such that
\[
\int_{R^-} M^+ (w \chi_{R^-}) 
\leq 
C \int_{R^+} w
\]
for every parabolic rectangle $R\subset\mathbb{R}^{n+1}$.
\end{theorem}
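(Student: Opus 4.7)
The plan is to use H\"older's inequality and the $L^q$-boundedness of $M^+$ from Lemma \ref{lem:MfbddLq}, and then absorb the result into the $RH^+_q$ hypothesis. Specifically, I would first apply H\"older's inequality with exponents $q$ and $q'=q/(q-1)$ on the left-hand side to get
\[
\int_{R^-} M^+(w\chi_{R^-}) \leq \lvert R^- \rvert^{1-1/q} \biggl(\int_{R^-} (M^+(w\chi_{R^-}))^q \biggr)^{1/q}.
\]

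Next I would apply Lemma \ref{lem:MfbddLq} to the function $f = w\chi_{R^-}$ (which is in $L^q$ by the reverse H\"older assumption), enlarging the domain of integration of the maximal function from $R^-$ to all of $\mathbb{R}^{n+1}$. This bounds the right-hand side above by
\[
c^{1/q} \lvert R^- \rvert^{1-1/q} \biggl(\int_{R^-} w^q \biggr)^{1/q} = c^{1/q} \lvert R^- \rvert \biggl(\dashint_{R^-} w^{q} \biggr)^{1/q}.
\]

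Now the $RH^+_q$ hypothesis directly controls the right-hand factor: it is at most $[w]_{RH^+_q} \dashint_{R^+} w$. Using $\lvert R^- \rvert = \lvert R^+ \rvert$, we obtain
\[
\int_{R^-} M^+(w\chi_{R^-}) \leq c^{1/q} [w]_{RH^+_q} \int_{R^+} w,
\]
which is the desired inequality with $C = c^{1/q} [w]_{RH^+_q}$.

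There is no real obstacle in this argument: each step is an off-the-shelf application of a result already established in the excerpt. The only minor thing to observe is that $M^+(w\chi_{R^-})$ may be supported strictly larger than $R^-$, but since we are integrating over $R^-$ and bounding by the integral over $\mathbb{R}^{n+1}$ via Lemma \ref{lem:MfbddLq}, this causes no difficulty. A feature worth noting for the record is that the proof uses the quantitative reverse H\"older constant and the $L^q$ operator norm of $M^+$ in a single combined constant, with no dependence on $R$.
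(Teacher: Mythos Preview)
Your proof is correct and follows essentially the same route as the paper's own argument: H\"older's inequality on $R^-$, then the $L^q$-boundedness of $M^+$ from Lemma~\ref{lem:MfbddLq}, then the $RH^+_q$ hypothesis. The only cosmetic difference is that the paper phrases the chain in terms of integral averages rather than integrals and measures.
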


\begin{proof}
By H\"older's inequality, Lemma~\ref{lem:MfbddLq} (with the constant $c$) and the assumption, we obtain
\begin{align*}
\dashint_{R^-} M^+(w \chi_{R^-}) 
&\leq 
\biggl( \dashint_{R^-} M^+(w \chi_{R^-})^q \biggr)^\frac{1}{q} \\
&\leq
c \biggl( \dashint_{R^-} w^q \biggr)^\frac{1}{q} 
\leq 
c C \dashint_{R^+} w .
\end{align*}
This completes the proof.
\end{proof}

The following lemma is a reverse weak type estimate for the parabolic maximal function.

\begin{lemma}
\label{lem:reverseweaktype}
Let $w$ be a weight.
Assume that there exists a constant $C$ such that $w(R^-)\leq C w(R^+)$ for every parabolic rectangle $R\subset\mathbb{R}^{n+1}$.
Then there exists a constant $c$ such that
for every parabolic rectangle $R \subset \mathbb{R}^{n+1}$ and $\lambda \geq w_{R^+}$ we have
\[
w(R^- \cap \{w>\lambda\}) \leq c \lambda \lvert R^- \cap \{ M^+ w > \lambda \}\rvert .
\]
\end{lemma}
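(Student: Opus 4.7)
The plan is to perform a parabolic Calder\'on--Zygmund stopping time decomposition on $R$ at level $\lambda$, analogous to the construction in the proof of Lemma~\ref{weightmeasure-estimate}, and to combine it with the doubling hypothesis $w(R^-)\leq Cw(R^+)$ to relate $w(R^-\cap\{w>\lambda\})$ to the forward averages of $w$. Starting from $R_0=R$, recursively subdivide it into parabolic sub-rectangles $R_{i,j}$ of spatial side $L/2^i$, using the overlapping-in-time subdivision of Lemma~\ref{weightmeasure-estimate} to handle the non-integer scaling of $2^p$, and select $R_{i,j}$ as soon as $w_{R_{i,j}^+}>\lambda$. The hypothesis $\lambda\geq w_{R^+}$ ensures that $R_0$ is not selected at the outset, so the construction starts nontrivially. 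This yields a pairwise disjoint collection $\{P_j\}$ of selected parabolic rectangles with $w_{P_j^+}>\lambda$, and by the parabolic Lebesgue differentiation theorem~\cite[Lemma~2.3]{KinnunenMyyryYang2022} we have $R^-\cap\{w>\lambda\}\subset\bigcup_j P_j^-$ up to a set of measure zero.

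For each selected $P_j$, the parent $\widehat{P}_j$ is unselected, so $w(\widehat{P}_j^+)\leq\lambda|\widehat{P}_j^+|$. The parent--child volume ratio (together with Lemma~\ref{lemma:timemove}(ii) if a forward time shift is required, whose proof only uses the doubling hypothesis) then yields $w(P_j^+)\leq c_0\lambda|P_j^+|$ for a constant $c_0$ depending only on $n$, $p$ and $C$. The doubling gives $w(P_j^-)\leq Cw(P_j^+)\leq Cc_0\lambda|P_j^-|$. Moreover, for every $(x,t)\in P_j^-$ the rectangle $P_j$ itself witnesses $M^+w(x,t)\geq w_{P_j^+}>\lambda$, so $\bigcup_j P_j^-\subset R^-\cap\{M^+w>\lambda\}$. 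Using the bounded overlap of $\{P_j^-\}$ provided by the subdivision, summation yields
\[
w(R^-\cap\{w>\lambda\})\leq\sum_j w(P_j^-)\leq Cc_0\lambda\sum_j|P_j^-|\leq c\lambda|R^-\cap\{M^+w>\lambda\}|.
\]

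The main obstacle is the parabolic Calder\'on--Zygmund construction together with the verification of the parent--child inclusion that lets us transfer the unselection condition on $\widehat{P}_j^+$ to an upper bound on $w(P_j^+)$. In the $U^+$-shifted variant from Lemma~\ref{weightmeasure-estimate}, the associated forward rectangles are shifted above the parent, so an extra geometric step is needed to identify them with $P^+$ of a genuine parabolic rectangle containing a prescribed $(x,t)\in P_j^-$ in its lower half; any resulting loss of a fixed geometric constant in the $\lambda$-level can be absorbed at the end via the a.e.\ inequality $w\leq M^+w$ that comes from the parabolic Lebesgue differentiation theorem.
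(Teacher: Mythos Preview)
Your proposal is correct and follows essentially the same route as the paper: a parabolic Calder\'on--Zygmund stopping-time decomposition on $R^-$ with selection criterion $w_{R_i^+}>\lambda$, the Lebesgue differentiation theorem to capture $\{w>\lambda\}$ in the selected pieces, the doubling hypothesis to pass from lower to upper halves, and the observation that each selected $P_j^-$ lies inside $\{M^+w>\lambda\}$.

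The paper organizes the chain slightly more simply than you do, and this sidesteps what you call ``the main obstacle.'' Instead of bounding $w(P_j^+)$ via the parent--child inclusion and then applying doubling on the child $P_j$, the paper applies doubling directly on the parent: using $S_{i,j}^-\subset R_{i-1,j}^-$ and the non-selection of the parent, one gets
\[
w(S_{i,j}^-)\le w(R_{i-1,j}^-)\le C\,w(R_{i-1,j}^+)\le C\lambda\,|R_{i-1,j}^+|\le 2^{n+p+1}C\lambda\,|S_{i,j}^-|,
\]
and then sums using the genuine pairwise disjointness of the $S_{i,j}^-$ rather than a bounded-overlap argument for $P_j^-$. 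This avoids having to verify the nesting $P_j\subset\widehat P_j$ (though that nesting does hold here, as in the proof of Theorem~\ref{gehring}) and avoids invoking Lemma~\ref{lemma:timemove}(ii).
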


\begin{proof}

Let $R_0=R(x_0,t_0,L) = Q(x_0,L) \times (t_0-L^p, t_0+L^p)$
and $\lambda \geq w_{R^+_0}$.
Denote $S^-_0 = R^-_0$. The time length of $S^-_0$ is $l_t(S^-_0) = L^p$.
We construct a parabolic Calder\'on--Zygmund decomposition at level $\lambda$.
We partition $S^-_0$ by dividing each spatial edge into $2$ equally long intervals. If
\[
\frac{l_t(S_{0}^-)}{\lfloor 2^{p} \rfloor} < \frac{L^p}{2^{p}},
\]
we divide the time interval of $S^-_0$ into $\lfloor 2^{p} \rfloor$ equally long intervals. Otherwise, we divide the time interval of $S^-_0$ into $\lceil 2^{p} \rceil$ equally long intervals.
We obtain subrectangles $S^-_1$ of $S^-_0$ with spatial side length $L_1 = l_x(S^-_1) = l_x(S^-_0)/2 = L / 2$
and time length either
\[
l_t(S^-_1) = \frac{l_t(S^-_0)}{\lfloor 2^{p} \rfloor} = \frac{L^p}{\lfloor 2^{p} \rfloor} \quad \text{or} \quad l_t(S^-_1) = \frac{L^p}{\lceil 2^{p} \rceil} .
\]
For every $S^-_1$, there exists a unique rectangle $R_1$ with spatial side length $L_1 = L / 2$ 
and time length $L_1^p = 2 L^p / 2^{p}$
such that $R_1$ has the same bottom as $S^-_1$.
We select those rectangles $S^-_1$ for which
\[
\frac{w(R^+_1)}{\lvert R^+_1 \rvert} =
\dashint_{R^+_1} w > \lambda
\]
and denote the obtained collection by $\{ S^-_{1,j} \}_j$.
If
\[
\frac{w(R^+_1)}{\lvert R^+_1 \rvert} =
\dashint_{R^+_1} w \leq \lambda ,
\]
we subdivide $S^-_1$ in the same manner as above
and select all those subrectangles $S^-_2$ for which
\[
\frac{w(R^+_2)}{\lvert R^+_2 \rvert} =
\dashint_{R^+_2} w  > \lambda
\]
to obtain family $\{ S^-_{2,j} \}_j$.
We continue this selection process recursively.
At the $i$th step, we partition unselected rectangles $S^-_{i-1}$ by dividing each spatial side into $2$ equally long intervals. If
\begin{equation}
\label{weaktype:JNproof_eq1}
\frac{l_t(S_{i-1}^-)}{\lfloor 2^{p} \rfloor} < \frac{L^p}{2^{pi}},
\end{equation}
we divide the time interval of $S^-_{i-1}$ into $\lfloor 2^{p} \rfloor$ equally long intervals. 
Otherwise, if
\begin{equation}
\label{weaktype:JNproof_eq2}
\frac{l_t(S_{i-1}^-)}{\lfloor 2^{p} \rfloor} \geq \frac{L^p}{2^{pi}},
\end{equation}
we divide the time interval of $S^-_{i-1}$ into $\lceil 2^{p} \rceil$ equally long intervals.
We obtain subrectangles $S^-_i$. 
For every $S^-_i$, there exists a unique rectangle $R_i$ with spatial side length $L_i = L / 2^{i}$
and time length $L_i^p = 2 L^p / 2^{pi}$
such that $R_i$ has the same bottom as $S^-_i$.
Select those $S^-_i$ for which 
\[
\frac{w(R^+_i)}{\lvert R^+_i \rvert} =
\dashint_{R^+_i} w  > \lambda
\]
and denote the obtained collection by $\{ S^-_{i,j} \}_j$.
If 
\[
\frac{w(R^+_i)}{\lvert R^+_i \rvert} =
\dashint_{R^+_i} w \leq \lambda ,
\]
we continue the selection process in $S^-_i$.
In this manner we obtain a collection $\{S^-_{i,j} \}_{i,j}$ of pairwise disjoint rectangles.

If $(x,t) \in S^-_0 \setminus \bigcup_{i,j} S^-_{i,j}$, then there exists a sequence of subrectangles $S^-_l$ containing $(x,t)$ such that 
\[
\frac{w(R^+_l)}{\lvert R^+_l \rvert} =
\dashint_{R^+_l} w \leq \lambda
\]
and $\lvert S^-_l \rvert \to 0$ as $l \to \infty$.
The Lebesgue differentiation theorem~\cite[Lemma~2.3]{KinnunenMyyryYang2022}
implies that $w(x,t) \leq \lambda$ for almost every $(x,t) \in S^-_0 \setminus \bigcup_{i,j} S^-_{i,j}$.
It follows that
\[
S^-_0 \cap \{ w > \lambda \} \subset \bigcup_{i,j} S^-_{i,j}
\]
up to a set of measure zero.

By the assumption, we have $w(R^-_{i-1,j}) \leq C w(R^+_{i-1,j})$ for every $R_{i-1,j}$.
Since
\[
\lambda < \dashint_{R^+_{i,j}} w  \leq M^+ w(x,t)
\]
for every $(x,t)\in S^-_{i,j} \subset R^-_{i,j}$, by \eqref{eq:tsidel} we conclude that
\begin{align*}
w(S^-_0 \cap \{ w > \lambda \})
&\leq \sum_{i,j} w(S^-_{i,j})
\leq \sum_{i,j} w(R^-_{i-1,j})
\leq C \sum_{i,j} w(R^+_{i-1,j})\\
&\leq C \lambda \sum_{i,j} \lvert {R^+_{i-1,j}} \rvert
\leq 2^{n+p+1} C \lambda \sum_{i,j} \lvert {S^-_{i,j}} \rvert \\
&= 2^{n+p+1} C \lambda \sum_{i,j} \lvert {S^-_{i,j}} \cap \{M^+ w(x,t) > \lambda\} \rvert \\
&\leq
2^{n+p+1} C \lambda \lvert S^-_0 \cap \{M^+ w(x,t) > \lambda\} \rvert .
\end{align*}
This completes the proof.
\end{proof}

We observe that the parabolic Fujii--Wilson condition implies the following parabolic logarithmic condition.

\begin{theorem}
Let $w$ be a weight.
Assume that there exists a constant $C_1$ such that
\[
\int_{R^-} M^+ (w \chi_{R^-}) 
\leq 
C_1 \int_{R^+} w
\]
for every parabolic rectangle $R\subset\mathbb{R}^{n+1}$.
Then there exists a constant $C_2$ such that
\[
\int_{R^-} w \log^+ \biggl(\frac{w}{w_{R+}}\biggr) \leq C_2 w(R^+)
\]
for every parabolic rectangle $R\subset\mathbb{R}^{n+1}$.
\end{theorem}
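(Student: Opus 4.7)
My plan is to adapt the classical Fujii--Wilson argument to the parabolic setting, combining Cavalieri's principle with the reverse weak-type estimate of Lemma~\ref{lem:reverseweaktype}.

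First, I would verify the forward doubling $w(R^-)\le C_1 w(R^+)$ needed to invoke Lemma~\ref{lem:reverseweaktype}. The parabolic Lebesgue differentiation theorem~\cite[Lemma~2.3]{KinnunenMyyryYang2022} gives $w\le M^+(w\chi_{R^-})$ almost everywhere on $R^-$, so the hypothesis yields $w(R^-)\le\int_{R^-} M^+(w\chi_{R^-})\le C_1 w(R^+)$.

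Next, I would use Cavalieri's principle to write
\[
\int_{R^-} w\log^+\!\bigl(w/w_{R^+}\bigr) = \int_{w_{R^+}}^\infty \frac{w(R^-\cap\{w>\lambda\})}{\lambda}\dla,
\]
apply Lemma~\ref{lem:reverseweaktype} at each level $\lambda\ge w_{R^+}$, and integrate to obtain
\[
\int_{R^-} w\log^+\!\bigl(w/w_{R^+}\bigr)\le c\int_{w_{R^+}}^\infty \lvert R^-\cap\{M^+ w>\lambda\}\rvert\dla \le c\int_{R^-} M^+ w.
\]
A careful inspection of the proof of Lemma~\ref{lem:reverseweaktype} shows that the stopping rectangles $R_{i,j}$ are all contained in $R$, so the right-hand side can be sharpened to $c\int_{R^-} M^+(w\chi_R)$.

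The final step, closing the estimate by $C_2 w(R^+)$, is the \emph{principal obstacle}. Using the subadditivity $M^+(w\chi_R)\le M^+(w\chi_{R^-})+M^+(w\chi_{R^+})$, the first summand is bounded by $C_1 w(R^+)$ directly from the hypothesis. For the second summand, the weak $(1,1)$ bound for $M^+$ alone is not enough, since the resulting tail integral diverges logarithmically. I expect this is handled by exploiting the geometric constraint $l_t(P^+)\ge t_0-t$, valid for every $P^-\ni(x,t)\in R^-$ with $P^+\cap R^+\ne\emptyset$, together with a dyadic slicing of $R^-$ in the time direction and a reapplication of the hypothesis on a chain of shifted subrectangles.
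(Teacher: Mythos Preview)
Your first three steps coincide with the paper's proof: verify $w(R^-)\le C_1 w(R^+)$ via Lebesgue differentiation, rewrite the left side by Cavalieri, and feed in the reverse weak-type estimate of Lemma~\ref{lem:reverseweaktype}. Where you diverge is in what you extract from that lemma. The paper reads off from the Calder\'on--Zygmund construction the \emph{localized} conclusion
\[
w(R^-\cap\{w>\lambda\})\le c\lambda\,\bigl\lvert R^-\cap\{M^+(w\chi_{R^-})>\lambda\}\bigr\rvert,
\qquad \lambda\ge w_{R^+},
\]
and then the hypothesis closes the argument in one line:
\[
\int_{R^-} w\log^+\Bigl(\frac{w}{w_{R^+}}\Bigr)
\le c\int_{R^-} M^+(w\chi_{R^-})
\le cC_1\, w(R^+).
\]
There is no splitting into $M^+(w\chi_{R^-})+M^+(w\chi_{R^+})$, no tail integral, and no chain of shifted subrectangles.

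The gap in your proposal is therefore self-inflicted. By stopping at $M^+(w\chi_R)$ you create the ``principal obstacle'' of bounding $\int_{R^-} M^+(w\chi_{R^+})$, and your suggested remedy (dyadic time-slicing plus a chain argument) is left at the level of an expectation rather than an argument; as stated it is not a proof. You should instead argue that the stopping-time construction in Lemma~\ref{lem:reverseweaktype} can be taken so that the selection rectangles $R_{i,j}^+$ used to witness $M^+$ lie inside $R_0^-$ (compare the adjustment for the top-most subrectangles made in the proof of Lemma~\ref{weightmeasure-estimate}); this is what justifies replacing $M^+w$ by $M^+(w\chi_{R^-})$ and makes the whole detour unnecessary.
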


\begin{proof}
Since the assumption implies $w(R^-)\leq C_1 w(R^+)$ for every parabolic rectangle $R\subset\mathbb{R}^{n+1}$, we observe that Lemma~\ref{lem:reverseweaktype} 
is applicable.
Thus, 
it follows that
\begin{align*}
\int_{R^-} w \log^+ \biggl(\frac{w}{w_{R+}}\biggr)
&= 
\int_{R^-\cap \{w>w_{R+}\}} \biggl(w \int_{w_{R^+}}^{w} \frac{1}{\lambda} \dla\biggr)\\
&=
\int_{w_{R^+}}^\infty\biggl( \frac{1}{\lambda} \int_{R^- \cap \{w>\lambda\}} w\biggr)\dla \\
&=
\int_{w_{R^+}}^\infty \frac{1}{\lambda} w(R^- \cap \{w>\lambda\}) \dla \\
&\leq
c \int_{w_{R^+}}^\infty \lvert R^- \cap \{ M^+ (w \chi_{R^-}) > \lambda \}\rvert \dla
\\
&\leq
c \int_{R^-} M^+ (w \chi_{R^-})
\leq c C_1 \int_{R^+} w .
\end{align*}
\end{proof}

The next theorem shows that the parabolic logarithmic condition implies the qualitatitive measure condition.
This completes the proof of Theorem~\ref{thm:RHIchar}.

\begin{theorem}
Let $w$ be a weight.
Assume that there exists a constant $C$ such that
\[
\int_{R^-} w \log^+ \biggl(\frac{w}{w_{R+}}\biggr) \leq C w(R^+)
\]
for every parabolic rectangle $R\subset\mathbb{R}^{n+1}$.
Then 
for every $\beta>0$ there exists $0<\alpha<1$ such that for every parabolic rectangle $R$ and every measurable set $E \subset R^-$ for which $\lvert E \rvert < \alpha \lvert R^- \rvert$ we have $w(E) < \beta w(R^+)$.
\end{theorem}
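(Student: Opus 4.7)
The plan is to split the set $E$ according to whether $w$ is above or below a large multiple $\lambda$ of the average $w_{R^+}$, and then balance the two resulting estimates by first picking $\lambda$ very large and then $\alpha$ very small.

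First I would write
\[
w(E) = w(E \cap \{w \leq \lambda w_{R^+}\}) + w(E \cap \{w > \lambda w_{R^+}\}),
\]
where $\lambda > 1$ is a threshold to be fixed later. For the small-values piece, the pointwise bound $w \leq \lambda w_{R^+}$ on this part of $E$ together with $|R^-| = |R^+|$ gives
\[
w(E \cap \{w \leq \lambda w_{R^+}\}) \leq \lambda w_{R^+} |E| < \lambda \alpha w_{R^+} |R^-| = \lambda \alpha\, w(R^+).
\]

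For the large-values piece I would use the logarithmic hypothesis. On the set $\{w > \lambda w_{R^+}\}$ we have $\log^+(w/w_{R^+}) > \log \lambda$, so
\[
w(E \cap \{w > \lambda w_{R^+}\}) \leq \int_{R^- \cap \{w > \lambda w_{R^+}\}} w \leq \frac{1}{\log \lambda} \int_{R^-} w \log^+\!\biggl(\frac{w}{w_{R^+}}\biggr) \leq \frac{C}{\log \lambda}\, w(R^+).
\]
Combining the two estimates yields
\[
w(E) < \biggl(\lambda \alpha + \frac{C}{\log \lambda}\biggr) w(R^+).
\]

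Given $\beta > 0$, I would first choose $\lambda$ so large that $C / \log \lambda < \beta / 2$, and then pick $\alpha \in (0,1)$ so small that $\lambda \alpha < \beta / 2$. This makes the right-hand side less than $\beta\, w(R^+)$, which is the desired conclusion. There is no serious obstacle here: the only thing to be careful about is that $\lambda$ must be chosen before $\alpha$, since the permissible size of $\alpha$ depends on $\lambda$, but the two-parameter balancing is standard and the estimate holds uniformly over all parabolic rectangles $R$.
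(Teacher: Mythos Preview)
Your proof is correct and follows essentially the same approach as the paper: split $E$ at the threshold $\{w>\lambda w_{R^+}\}$, bound the small-values part by $\lambda\alpha\,w(R^+)$ and the large-values part by $(C/\log\lambda)\,w(R^+)$ via the logarithmic hypothesis, then choose $\lambda$ and $\alpha$ in that order to make each term at most $\beta/2$. The paper's argument is identical up to notation (it writes $\sigma$ for your $\lambda$).
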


\begin{proof}
Let $\beta>0$. Choose $\sigma>1$ such that $C/\log \sigma \leq \beta/2$ and $0<\alpha<1$ such that $\sigma \alpha \leq \beta/2$.
Let $E\subset R^-$ be a measurable set with $\lvert E \rvert < \alpha \lvert R^- \rvert$.
Then we have
\begin{align*}
w(E \cap \{ w \leq \sigma w_{R^+} \})
\leq
\sigma w_{R^+} \lvert E \rvert 
<
\sigma \alpha w(R^+)
\leq
\frac{\beta}{2} w(R^+)
\end{align*}
and
\begin{align*}
w(E \cap \{ w > \sigma w_{R^+} \}) 
&=
\frac{1}{\log\sigma} \int_{E \cap \{w > \sigma w_{R^+}\}} w \log \sigma\\
&\leq
\frac{1}{\log\sigma} \int_{E \cap \{w > \sigma w_{R^+}\}} w \log \biggl(\frac{w}{w_{R^+}}\biggr)
\\
&\leq
\frac{1}{\log\sigma} \int_{R^- \cap \{w > w_{R^+}\}} w \log \biggl(\frac{w}{w_{R^+}}\biggr)\\
&=
\frac{1}{\log\sigma} \int_{R^-} w \log^+ \biggl(\frac{w}{w_{R^+}}\biggr)
\\
&\leq
\frac{C}{\log\sigma} w(R^+)
\leq
\frac{\beta}{2} w(R^+) .
\end{align*}
This shows that $w(E) < \beta w(R^+)$.
\end{proof}

\section{Parabolic Gehring lemma}

In this section, we show the parabolic Gehring lemma which states that the parabolic reverse H\"older inequality is self-improving. 
In particular, it implies that if $w\in RH^+_q$, then $w \in RH^+_{q+\varepsilon}$ for some $\varepsilon>0$.
The results in this section also hold in the case $p=1$.
The next lemma is the main ingredient in the proof of the parabolic Gehring lemma.

\begin{lemma}
\label{selfimprovelemma}
Let $1<q<\infty$ and $w$ be a weight.
Assume that there exists a constant $C_1>1$ such that for every parabolic rectangle $R\subset\mathbb{R}^{n+1}$ and $\lambda\geq w_{R^+}$ we have
\[
\int_{R^- \cap \{w>\lambda\}} w^q \leq C_1 \lambda^{q-1} \int_{R \cap \{w>\lambda\}} w .
\]
Then there exist $\varepsilon=\varepsilon(n,p,q,C_1)>0$ and $C=C(n,p,q,C_1)$ such that for every $R\subset\mathbb{R}^{n+1}$ we have
\[
\int_{R^-} w^{q+\varepsilon} \leq C \biggl( \dashint_{R} w \biggr)^\varepsilon \int_{R} w^q .
\]
\end{lemma}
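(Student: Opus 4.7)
The plan is to follow the classical Gehring-type self-improvement argument, adapted to the parabolic geometry by reusing the covering iteration developed in the proof of $(iv)\Rightarrow(i)$ in Theorem~\ref{thm:RHIchar}. The hypothesis is a good-$\lambda$ inequality above the level $\lambda_0 = w_{R^+}$, and the aim is to convert it into higher integrability via Cavalieri's principle and absorption.

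First, I would apply Cavalieri's principle to write
\[
\int_{R^-} w^{q+\varepsilon} = \varepsilon \int_0^\infty \lambda^{\varepsilon-1} \int_{R^-\cap\{w>\lambda\}} w^q\,d\lambda
\]
and split the $\lambda$-integration at $\lambda_0 = w_{R^+}$. The head $\lambda<\lambda_0$ contributes at most $\lambda_0^\varepsilon \int_{R^-} w^q \leq 2^\varepsilon (\dashint_R w)^\varepsilon \int_R w^q$, using $w_{R^+}\leq 2\dashint_R w$. On the tail $\lambda\geq\lambda_0$ the hypothesis gives $\int_{R^-\cap\{w>\lambda\}}w^q \leq C_1\lambda^{q-1}\int_{R\cap\{w>\lambda\}} w$, so Fubini and an elementary integration in $\lambda$ produce a bound of $\frac{C_1 \varepsilon}{q+\varepsilon-1}\int_R w^{q+\varepsilon}$. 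Writing $\int_R = \int_{R^-} + \int_{R^+}$ and choosing $\varepsilon$ small enough that $\frac{C_1 \varepsilon}{q+\varepsilon-1} < 1/2$, I would absorb the $R^-$ contribution to arrive at the one-sided estimate
\[
\int_{R^-} w^{q+\varepsilon} \leq c_0 \left(\dashint_R w\right)^\varepsilon \int_R w^q + c_1 \int_{R^+} w^{q+\varepsilon},
\]
where $c_0,c_1$ depend on $n,p,q,C_1,\varepsilon$ and $c_1\to 0$ as $\varepsilon\to 0$.

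This inequality has exactly the same structure as~\eqref{cavalieri_iteration} in the proof of $(iv)\Rightarrow(i)$, so the covering scheme developed there applies almost verbatim: cover $R_0^-$ by $M = 2^{n+1}$ rectangles $R_{1,j}^-$ with bounded overlap inside $R_0$, cover each $R_{1,j}^+$ by further rectangles $R_{2,j}^-$, and iterate $N$ times. Using $\int_{R_{i,j}} w \leq \int_{R_0} w$, $\int_{R_{i,j}} w^q \leq \int_{R_0} w^q$, and the volume scaling $|R_{i,j}|^{-\varepsilon} = 2^{-\varepsilon} L^{-(n+p)\varepsilon} 2^{(n/p+1)i\varepsilon}$, the stage-$i$ sum is bounded by $[2^{n+1}\,2^{(n/p+1)\varepsilon}]^{i}(\dashint_{R_0} w)^\varepsilon \int_{R_0} w^q$. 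Combining with the outer weight $c_1^{i-1}$ gives a convergent geometric series whenever $c_1\cdot 2^{n+1+(n/p+1)\varepsilon} < 1$, which holds once $\varepsilon$ is small enough.

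The remaining delicate point is that both the absorption step and the vanishing of the iteration tail need $\int_{R_0} w^{q+\varepsilon}$ to be finite a priori. I would handle this by running the whole argument with the truncated weight $w_k = \min\{w,k\}$: for $\lambda < k$ one has $\{w_k > \lambda\} = \{w > \lambda\}$ and $w_k^q \leq w^q$, so the hypothesis for $w$ still controls the Cavalieri expression for $w_k$. This yields an estimate for $\int_{R_0^-} w_k^{q+\varepsilon}$ that is uniform in $k$, and the monotone convergence theorem as $k\to\infty$ produces the desired bound for $w$. The main obstacle is precisely this truncation step: one must verify that every $k$-dependent correction produced by passing from $w$-integrals to $w_k$-integrals (in particular, the residual $k^{q+\varepsilon-1}\int_R (w-k)_+$ appearing after the tail Fubini) remains uniformly controlled throughout the covering iteration, in exactly the same spirit as the truncation used in the proof of $(iv)\Rightarrow(i)$.
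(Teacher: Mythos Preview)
Your strategy is exactly the paper's: Cavalieri, split at $\lambda_0=w_{R^+}$, apply the hypothesis on the tail, Fubini back, absorb the $R^-$ piece, and then feed the resulting one-step estimate into the same $M=2^{n+1}$ covering iteration used in the proof of $(iv)\Rightarrow(i)$. The paper's iteration estimate \eqref{iteration-estimate} is precisely your displayed inequality (with $c_1$ written as $c_1\varepsilon$).

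The one place where your write-up does not close is the truncation. After Fubini on the tail you do not get $\int_R w_k^{q+\varepsilon}$ back; you get the larger mixed quantity $\int_R w_k^{q+\varepsilon-1}w$. If you insist on iterating $\int w_k^{q+\varepsilon}$, the discrepancy is exactly the ``residual'' $k^{q+\varepsilon-1}\int_R(w-k)_+$ you mention, but this term is \emph{not} uniformly controlled in $k$: it is only bounded by $\int_{R\cap\{w>k\}} w^{q+\varepsilon}$, which you cannot assume finite without circularity. The paper sidesteps this cleanly by taking $\int w_k^{q+\varepsilon-1}w$ itself as the quantity to iterate on both sides. This is what naturally appears after Cavalieri with $d\mu=w^q\,dx\,dt$ on the left and $d\mu=w\,dx\,dt$ on the right, it is finite on every rectangle since $w_k^{q+\varepsilon-1}w\le k^{q+\varepsilon-1}w$, so absorption is legitimate, and after the covering iteration one obtains $\int_{R_0^-} w_k^{q+\varepsilon-1}w \le C(\dashint_{R_0}w)^\varepsilon\int_{R_0}w^q$ with constants independent of $k$. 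Since $w_k^{q+\varepsilon}\le w_k^{q+\varepsilon-1}w$, monotone convergence finishes. So the fix is not to ``control the residual'' but to change which quantity you iterate.
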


\begin{proof}
Let $R \subset \mathbb{R}^{n+1}$ be a parabolic rectangle and $\lambda_0 = w_{R^+}$.
Let $\varepsilon>0$ to be chosen later.
We show that the assumption implies the corresponding estimate for the truncated weight $w_k = \min\{w,k\}$, $k\in\mathbb{Z}$, that is,
\begin{equation}
\label{eq:gehring_trunc}
\int_{R^- \cap \{w_k>\lambda\}} w^q \leq C_1 \lambda^{q-1} \int_{R \cap \{w_k>\lambda\}} w .
\end{equation}
If $\lambda\geq k$, then $\{ w_k > \lambda \} = \emptyset $ and thus the estimate holds.
On the other hand,
if $\lambda < k$,
then $\{ w_k > \lambda \} = \{ w > \lambda \}$.
Hence, \eqref{eq:gehring_trunc} holds true.

We apply Cavalieri's principle with the exponent $\varepsilon$ and the measure $d\mu = w^q \dx\dt$ to obtain
\begin{align*}
&\int_{R^- \cap \{w_k>\lambda_0\}} w_k^{q+\varepsilon -1} w 
\leq \int_{R^- \cap \{w_k>\lambda_0\}} w_k^{\varepsilon} \dmu \\ 
&\qquad\leq \varepsilon \int_{\lambda_0}^\infty \biggl(\lambda^{\varepsilon -1} \int_{R^- \cap \{w_k>\lambda\}} w^q\biggr) \dla + \lambda_0^{\varepsilon} \int_{R^- \cap \{w_k>\lambda_0\}} w^q .
\end{align*}
The estimate \eqref{eq:gehring_trunc} implies
\[
\int_{\lambda_0}^\infty\biggl(\lambda^{\varepsilon -1} \int_{R^- \cap \{w_k>\lambda\}} w^q\biggr) \dla \leq C_1 \int_{\lambda_0}^\infty \biggl(\lambda^{q+\varepsilon -2} \int_{R \cap \{w_k>\lambda\}} w\biggr)\dla .
\]
By Cavalieri's principle with the exponent $q+\varepsilon -1$ and $\dmu = w \dx\dt$, we get
\begin{align*}
\int_{\lambda_0}^\infty \biggl(\lambda^{q+\varepsilon -2} \int_{R \cap \{w_k>\lambda\}} w\biggr) \dla \leq \frac{1}{q+\varepsilon -1} \int_{R \cap \{w_k>\lambda_0\}} w_k^{q+\varepsilon -1} w .
\end{align*}
Consequently,
\begin{align*}
\int_{R^- \cap \{w_k>\lambda_0\}} w_k^{q+\varepsilon -1} w 
\leq 
\frac{C_1 \varepsilon}{q+\varepsilon -1} \int_{R \cap \{w_k>\lambda_0\}} w_k^{q+\varepsilon -1} w
+ \lambda_0^{\varepsilon} \int_{R^- \cap \{w_k>\lambda_0\}} w^q .
\end{align*}
By the boundedness of $w$ and choosing $\varepsilon>0$ to be small enough, we can absorb the integral over $R^- \cap \{w_k>\lambda_0\}$ of the first term to the left-hand side to obtain 
\begin{align*}
&\biggl( 1- \frac{C_1 \varepsilon}{q+\varepsilon -1} \biggr) \int_{R^- \cap \{w_k>\lambda_0\}} w_k^{q+\varepsilon -1} w \\
&\qquad\leq \frac{C_1 \varepsilon}{q+\varepsilon -1} \int_{R^+ \cap \{w_k>\lambda_0\}} w_k^{q+\varepsilon -1} w
+ \lambda_0^{\varepsilon} \int_{R^- \cap \{w_k>\lambda_0\}} w^q .
\end{align*}
Hence, we have
\begin{align*}
\int_{R^- \cap \{w_k>\lambda_0\}} w_k^{q+\varepsilon -1} w \leq c_0 \lambda_0^{\varepsilon} \int_{R^- \cap \{w_k>\lambda_0\}} w^q + c_1 \varepsilon \int_{R^+ \cap \{w_k>\lambda_0\}} w_k^{q+\varepsilon -1} w ,
\end{align*}
where
\[
c_0 = \frac{q+\varepsilon -1}{q+\varepsilon -1- C_1\varepsilon }
\quad\text{and}\quad
c_1 = \frac{C_1}{q+\varepsilon -1- C_1\varepsilon } .
\]
We combine this last estimate with
\begin{align*}
\int_{R^-} w_k^{q+\varepsilon -1} w &= \int_{R^- \cap \{w_k>\lambda_0\}} w_k^{q+\varepsilon -1} w + \int_{R^- \cap \{w_k\leq \lambda_0\}} w_k^{q+\varepsilon -1} w \\
& \leq \int_{R^- \cap \{w_k>\lambda_0\}} w_k^{q+\varepsilon -1} w + \lambda_0^{\varepsilon} \int_{R^- \cap \{w_k\leq \lambda_0\}} w^q
\end{align*}
to obtain
\begin{equation}
\label{iteration-estimate}
\int_{R^-} w_k^{q+\varepsilon -1} w \leq c_0 w_{R^+}^\varepsilon \int_{R^-} w^q + c_1 \varepsilon \int_{R^+} w_k^{q+\varepsilon -1} w .
\end{equation}

Fix $R_0 = Q(x_0,L) \times (t_0 - L^p, t_0 + L^p) \subset \mathbb{R}^{n+1}$.
We cover $R^-_0$ by $M = 2^{n+1}$ rectangles $R^-_{1,j}$ with spatial side length $l_x = L/2^{1/p}$ and time length $l_t = L^p / 2$. This can be done by dividing each spatial edge of $R^-_0$ into two equally long intervals that may overlap each other, and the time interval of $R^-_0$ into two equally long pairwise disjoint intervals.
Observe that the overlap of $R^-_{1,j}$ is bounded by $M/2 = 2^n$.
Then consider $R^+_{1,j}$ and cover it in the same way as before by $M$ rectangles $R^-_{2,j}$ with spatial side length $l_x = L/2^{2/p}$ and time length $l_t = L^p / 2^2$.
At the $i$th step, cover $R^+_{i-1,j}$ by $M$ rectangles $R^-_{i,j}$ with spatial side length $l_x = L/2^{i/p}$ and time length $l_t = L^p / 2^i$ such that their overlap is bounded by $M/2$.
Note that every $R_{i,j}$ is contained in $R_0$.
Then iterating~\eqref{iteration-estimate} we obtain
\begin{align*}
\int_{R^-_0} w_k^{q+\varepsilon -1} w 
&\leq \sum_{j=1}^{M} \int_{R^-_{1,j}} w_k^{q+\varepsilon -1} w \\
&\leq \sum_{j=1}^{M} c_0 w_{R^+_{1,j}}^\varepsilon \int_{R^{-}_{1,j}} w^q + \sum_{j=1}^{M} c_1 \varepsilon \int_{R^+_{1,j}} w_k^{q+\varepsilon -1} w \\
&\leq c_0 \sum_{j=1}^M w_{R^+_{1,j}}^\varepsilon \int_{R^{-}_{1,j}} w^q + c_1 \varepsilon \sum_{j=1}^{M^2} \int_{R^-_{2,j}} w_k^{q+\varepsilon -1} w \\
&\leq c_0 \sum_{j=1}^M w_{R^+_{1,j}}^\varepsilon \int_{R^{-}_{1,j}} w^q\\
&\qquad + c_1 \varepsilon \sum_{j=1}^{M^2} \biggl( c_0 w_{R^+_{2,j}}^\varepsilon \int_{R^{-}_{2,j}} w^q + c_1 \varepsilon \int_{R^+_{2,j}} w_k^{q+\varepsilon -1} w \biggr) \\
&= c_0 \sum_{j=1}^M w_{R^+_{1,j}}^\varepsilon \int_{R^{-}_{1,j}} w^q \\
&\qquad+ c_0 c_1 \varepsilon \sum_{j=1}^{M^2} w_{R^+_{2,j}}^\varepsilon \int_{R^{-}_{2,j}} w^q + (c_1 \varepsilon)^2 \sum_{j=1}^{M^2} \int_{R^+_{2,j}} w_k^{q+\varepsilon -1} w \\
&\leq c_0 \sum_{i=1}^N \biggl( (c_1 \varepsilon)^{i-1} \sum_{j=1}^{M^i} w_{R^+_{i,j}}^\varepsilon \int_{R^{-}_{i,j}} w^q \biggr) + (c_1 \varepsilon)^N \sum_{j=1}^{M^N} \int_{R^+_{N,j}} w_k^{q+\varepsilon -1} w \\
&\leq c_0 \sum_{i=1}^N \biggl( (c_1 \varepsilon)^{i-1} \sum_{j=1}^{M^i} w_{R^+_{i,j}}^\varepsilon \int_{R^{-}_{i,j}} w^q \biggr) + \biggl( c_1 \varepsilon \frac{M}{2} \biggr)^N \int_{R_0} w_k^{q+\varepsilon -1} w \\
&= I + II .
\end{align*}
We observe that $II$ tends to zero if $\varepsilon < 2/(c_1 M) = 1/(c_1 2^n)$ as $N \to \infty$.
For the inner sum of the first term $I$, we have 
\begin{align*}
\sum_{j=1}^{M^i} w_{R^+_{i,j}}^\varepsilon \int_{R^{-}_{i,j}} w^q 
&= \sum_{j=1}^{M^i} \lvert R^+_{i,j} \rvert^{-\varepsilon} w(R^+_{i,j})^\varepsilon \int_{R^{-}_{i,j}} w^q \\
&\leq \sum_{j=1}^{M^i} 2^{\varepsilon+(\frac{n}{p}+1)\varepsilon i} \lvert R_0 \rvert^{-\varepsilon} w(R_0)^\varepsilon \int_{R^{-}_{i,j}} w^q \\
&\leq 2^{\varepsilon+(\frac{n}{p}+1)\varepsilon i} w_{R_{0}}^\varepsilon \biggl(\frac M2\biggr)^i \int_{R_0} w^q .
\end{align*}
Thus, it follows that
\begin{align*}
I \leq c_0 2^{\varepsilon}w_{R_{0}}^\varepsilon \int_{R_0} w^q \sum_{i=1}^N (c_1 \varepsilon)^{i-1} 2^{(\frac{n}{p}+1)\varepsilon i} \biggl(\frac M2\biggr)^i .
\end{align*}
We estimate the sum by
\begin{align*}
\sum_{i=1}^N (c_1 \varepsilon)^{i-1} 2^{(\frac{n}{p}+1)\varepsilon i} \biggl(\frac M2\biggr)^i
&= 2^{(\frac{n}{p}+1)\varepsilon } \frac{M}{2} \sum_{i=0}^{N-1} \biggl( c_1 \varepsilon 2^{(\frac{n}{p}+1)\varepsilon} \frac{M}{2} \biggr)^i \\
&\leq 2^{(\frac{n}{p}+1) \varepsilon } \frac{M}{2} \frac{1}{1-c_1 \varepsilon 2^{(\frac{n}{p}+1) \varepsilon } \frac{M}{2} } \\
&= \frac{2^{(\frac{n}{p}+1) \varepsilon +n }}{1-c_1 \varepsilon 2^{(\frac{n}{p}+1) \varepsilon + n } } = \frac{C}{2^{\varepsilon}c_0} ,
\end{align*}
whenever $\varepsilon$ is small enough, for example
\[
\varepsilon < \frac{1}{c_1 2^{\frac{n}{p}} M } = \frac{1}{c_1 2^{\frac{n}{p}+n+1} } .
\]
Then it holds that
\begin{align*}
\int_{R^-_0} w_k^{q+\varepsilon -1} w \leq C w_{R_{0}}^\varepsilon \int_{R_0} w^q 
\end{align*}
for small enough $\varepsilon$.
The claim follows from the monotone convergence theorem as $k \to \infty$.
\end{proof}

We are ready to prove the parabolic Gehring lemma.

\begin{theorem}
\label{gehring}
Let $1<q<\infty$ and $w$ be a weight.
Assume that there exists a constant $C_1>0$ such that for every parabolic rectangle $R\subset\mathbb{R}^{n+1}$ we have
\begin{equation}
\label{gehringRHI}
\biggl( \dashint_{R^-} w^q \biggr)^\frac{1}{q} \leq C_1 \dashint_{R^+} w .
\end{equation}
Then there exist $\varepsilon=\varepsilon(n,q,C_1)>0$ and $C=C(n,q,C_1)$ such that for every $R\subset\mathbb{R}^{n+1}$ we have
\[
\biggl( \dashint_{R^-} w^{q+\varepsilon} \biggr)^\frac{1}{q+\varepsilon} \leq C \dashint_{R^+} w .
\]
\end{theorem}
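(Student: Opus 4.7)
The overall plan is to verify the hypothesis of Lemma~\ref{selfimprovelemma} from the assumption~\eqref{gehringRHI}, invoke the lemma to produce a self-improved integrability estimate, and then convert the resulting bound back into reverse H\"older form.

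\emph{Step 1: Calder\'on--Zygmund estimate.} The first task is to show that~\eqref{gehringRHI} implies
$$
\int_{R^- \cap \{w > \lambda\}} w^q \leq C \lambda^{q-1} \int_{R \cap \{w > \lambda\}} w
$$
for every parabolic rectangle $R$ and every $\lambda \geq w_{R^+}$. I would do this by a stopping-time decomposition of $R^-$ modeled on those in Lemmas~\ref{weightmeasure-estimate} and~\ref{lem:reverseweaktype}: iteratively subdivide $R^-$ into parabolic subrectangles and select those $S^-_i$ whose forward-shifted companion $R^+_i$ satisfies $\dashint_{R^+_i} w > \lambda$. The Lebesgue differentiation theorem gives $R^- \cap \{w > \lambda\} \subset \bigcup_i S^-_i$ up to a null set, and non-selection of the parent produces the doubling bound $\dashint_{R^+_i} w \leq 2^{n+p}\lambda$. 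Applying~\eqref{gehringRHI} to $R_i$,
$$
\int_{S^-_i} w^q \leq C_1^q \lvert R^-_i\rvert \Bigl(\dashint_{R^+_i} w\Bigr)^q \leq C \lambda^{q-1} \int_{R^+_i} w,
$$
and a Vitali-type extraction of a pairwise disjoint subfamily of $\{R^+_i\}$ together with the selection condition $\lambda \lvert R^+_i \rvert < \int_{R^+_i} w$ converts $\sum_i \int_{R^+_i} w$ into a constant multiple of $\int_{R \cap \{w > \lambda\}} w$.

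\emph{Steps 2 and 3: Self-improvement and conclusion.} Lemma~\ref{selfimprovelemma} now yields $\varepsilon > 0$ and $C > 0$, depending on $n, p, q, C_1$, such that
$$
\int_{R^-} w^{q+\varepsilon} \leq C \Bigl(\dashint_R w\Bigr)^\varepsilon \int_R w^q
$$
for every parabolic rectangle $R$. H\"older's inequality combined with~\eqref{gehringRHI} gives $\dashint_{R^-} w \leq C_1 \dashint_{R^+} w$ and hence $\dashint_R w \leq C \dashint_{R^+} w$. The factor $\int_R w^q$ is bounded by $C \lvert R \rvert (\dashint_{R^+} w)^q$ using~\eqref{gehringRHI} directly on $R$ for the $R^-$ part, and by applying~\eqref{gehringRHI} to a cover of $R^+$ by lower halves of smaller parabolic rectangles contained in $R^+$ for the $R^+$ part. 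Substituting these bounds into the self-improvement estimate and taking the $(q+\varepsilon)$-th root yields the desired reverse H\"older inequality at exponent $q+\varepsilon$.

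The principal obstacle is Step~1: obtaining the Calder\'on--Zygmund inequality with the same threshold $\lambda$ on both sides requires careful bookkeeping of the selection and parent bounds, together with a Vitali-type disjointification analogous to the one in the proof of Lemma~\ref{weightmeasure-estimate}. The delicate point is to show that the total mass $\sum_i \int_{R^+_i} w$ is essentially captured by the mass restricted to $\{w > \lambda\}$, which is where both the selection criterion $\dashint_{R^+_i} w > \lambda$ and the parent doubling bound $\dashint_{R^+_i} w \leq 2^{n+p}\lambda$ must be combined effectively to avoid losing the threshold on the right-hand side.
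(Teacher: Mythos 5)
Your overall architecture is the same as the paper's (verify the hypothesis of Lemma~\ref{selfimprovelemma} by a parabolic Calder\'on--Zygmund argument, then convert the output back into reverse H\"older form), but two steps as written have genuine gaps. In Step~1, the claim that non-selection of the parent yields $\dashint_{R^+_i} w \leq 2^{n+p}\lambda$ does not follow: in the parabolic geometry the child's forward companion $R^+_i$ is \emph{not} contained in the parent's companion $R^+_{i-1}$. Since $R_i$ shares its bottom with $S^-_i$, the set $R^+_i$ typically lies inside the lower half $R^-_{i-1}$, about which the stopping rule says nothing. This is precisely the obstruction the paper's proof is built around: it selects with respect to the translated sets $S^+_i$ sitting at the top of $R_i$, and it controls $w(R^+_i)$ by shifting it forward in time into $S^+_{i-1}$ via the estimate \eqref{gehring:shifting}, i.e.\ Lemma~\ref{lemma:timemove}~$(ii)$ with $\theta=2^{p+1}$, which is available because \eqref{gehringRHI} and H\"older's inequality give $w(R^-)\leq C_1 w(R^+)$ for every parabolic rectangle. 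Some such forward-in-time transfer is indispensable, and your sketch omits it (your own selection on $R^+_i$ could be repaired by the same shifting device, but not by containment in the parent). A smaller point in the same step: the selection $\dashint w>\lambda$ only captures the mass on $\{w>\lambda/2\}$, not on $\{w>\lambda\}$; the paper restores equal levels by adding the band $\{\lambda\geq w>\lambda/2\}$ to the left-hand side before invoking Lemma~\ref{selfimprovelemma}.

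The endgame is also incorrect as stated: the inequality $\int_{R}w^q\leq C\lvert R\rvert\bigl(\dashint_{R^+}w\bigr)^q$ is false in general under \eqref{gehringRHI}. Take $w=\chi_{\{t>0\}}$, which satisfies \eqref{gehringRHI} with $C_1=1$; if $R^+$ meets $\{t>0\}$ in a fraction $\delta$ of its measure, then $\int_{R^+}w^q=\delta\lvert R^+\rvert$ while $\lvert R^+\rvert\bigl(\dashint_{R^+}w\bigr)^q=\delta^q\lvert R^+\rvert$, and $\delta\gg\delta^q$ as $\delta\to0$ since $q>1$. Correspondingly, the proposed cover of $R^+$ by lower halves of parabolic rectangles cannot deliver this bound: if the whole rectangles must lie in $R^+$, points near the top of $R^+$ force arbitrarily small rectangles and the comparison constants blow up; if only the lower halves lie in $R^+$, the upper halves exit $R^+$ and you end up with averages over sets strictly above $R^+$, which cannot be controlled by $\dashint_{R^+}w$ because no backward-in-time doubling is available for $RH^+_q$ weights. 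The paper instead accepts an estimate with $\dashint_{R^{++}}w$ on the right, where $R^{++}=R^++(0,l_t(R^+))$, and only afterwards recovers $\dashint_{R^+}w$ by covering $R^-_0$ with $2^{n+1}$ half-scale rectangles whose doubly shifted companions remain inside $R^+_0$. That final covering step is essential and is missing from your proposal.
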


\begin{proof}
Our aim is to apply Lemma \ref{selfimprovelemma}.
Let $R_0=R(x_0,t_0,L) = Q(x_0,L) \times (t_0-L^p, t_0+L^p)$ and $\lambda\geq w_{R^+_0}$.
Denote $S^-_0 = R^-_0$.
We construct a parabolic Calder\'on--Zygmund decomposition at level $\lambda$.
We partition $S^-_0$ by dividing each spatial edge into $2$ equally long intervals.
If
\[
\frac{l_t(S_{0}^-)}{\lfloor 2^{p} \rfloor} < \frac{L^p}{2^{p}},
\]
we divide the time interval of $S^-_0$ into $\lfloor 2^{p} \rfloor$ equally long intervals. 
Otherwise, we divide the time interval of $S^-_0$ into $\lceil 2^{p} \rceil$ equally long intervals.
We obtain subrectangles $S^-_1$ of  $S^-_0$ with spatial side length 
$l_x(S^-_1)=l_x(S^-_0)/2 = L / 2$ and time length either 
\[
l_t(S^-_1)=\frac{l_t(S^-_0)}{\lfloor 2^{p} \rfloor} 
=\frac{L^p}{\lfloor 2^{p} \rfloor} 
\quad\text{or}\quad
l_t(S^-_1)=\frac{L^p}{\lceil 2^{p} \rceil}.
\]
For every $S^-_1$, there exists a unique rectangle $R_1$ with spatial side length $l_x = L / 2$ and time length $l_t = 2 L^p / 2^{p}$
such that $R_1$ has the same bottom as $S^-_1$.
Let $S^+_1$ denote the translated $S^-_1$ with the same top as $R_1$.
We select those rectangles $S^-_1$ for which 
\[
\frac{w(S^+_1)}{\lvert S^+_1 \rvert} = \dashint_{S^+_1} w  > \lambda
\]
and denote the obtained collection by $\{ S^-_{1,j} \}_j$.
If 
\[
\frac{w(S^+_1)}{\lvert S^+_1 \rvert} = \dashint_{S^+_1} w \leq \lambda ,
\]
we subdivide $S^-_1$ in the same manner as above
and select all those subrectangles $S^-_2$ for which 
\[
\frac{w(S^+_2)}{\lvert S^+_2 \rvert} = \dashint_{S^+_2} w > \lambda
\]
to obtain family $\{ S^-_{2,j} \}_j$.
We continue this selection process recursively.
At the $i$th step, we partition unselected rectangles $S^-_{i-1}$ by dividing each spatial side into $2$ equally long intervals. 
If 
\begin{equation}
\label{gehring:JNproof_eq1}
\frac{l_t(S_{i-1}^-)}{\lfloor 2^{p} \rfloor} < \frac{L^p}{2^{pi}},
\end{equation}
we divide the time interval of $S^-_{i-1}$ into $\lfloor 2^{p} \rfloor$ equally long intervals. 
If
\begin{equation}
\label{gehring:JNproof_eq2}
\frac{l_t(S_{i-1}^-)}{\lfloor 2^{p} \rfloor} \geq \frac{L^p}{2^{pi}},
\end{equation}
we divide the time interval of $S^-_{i-1}$ into $\lceil 2^{p} \rceil$ equally long intervals.
We obtain subrectangles $S^-_i$. 
For every $S^-_i$, there exists a unique rectangle $R_i$ with spatial side length $l_x = L / 2^{i}$ and time length $l_t = 2 L^p / 2^{pi}$
such that $R_i$ has the same bottom as $S^-_i$.
Let $S^+_i$ denote the translated $S^-_i$ with the same top as $R_i$.
Select those $S^-_i$ for which 
\[
\frac{w(S^+_i)}{\lvert S^+_i \rvert} = \dashint_{S^+_i} w > \lambda
\]
and denote the obtained collection by $\{ S^-_{i,j} \}_j$.
If 
\[
\frac{w(S^+_i)}{\lvert S^+_i \rvert} = \dashint_{S^+_i} w \leq \lambda ,
\]
we continue the selection process in $S^-_i$.
In this manner we obtain a collection $\{S^-_{i,j} \}_{i,j}$ of pairwise disjoint rectangles.

By \eqref{eq:tsidel}, we have 
\[
\frac{1}{2} \frac{L^p}{2^{pi}} \leq l_t(S^-_i) \leq \frac{L^p}{2^{pi}}
\]
for every $S^-_i$.
By using the bounds for the time length of $S^-_i$, we observe that
\begin{align*}
l_t(R_i) - l_t(S^-_i) 
&\leq \frac{2 L^p}{2^{pi}} - \frac{1}{2} \frac{L^p}{2^{pi}} 
= \frac{3}{2} \frac{L^p}{2^{pi}} \\
&\leq \frac{L^p}{2^{p(i-1)}} 
= \frac{2L^p}{2^{p(i-1)}} - \frac{L^p}{2^{p(i-1)}} \\
&\leq l_t(R_{i-1}) - l_t(S^-_{i-1}) .
\end{align*}
This implies that $R_{i} \subset R_{i-1}$
for a fixed rectangle $S^-_{i-1}$ and for every subrectangle $S^-_{i} \subset S^-_{i-1}$.

We have a collection $\{ S^-_{i,j} \}_{i,j}$ of pairwise disjoint rectangles. 
However, the rectangles in the corresponding collection $\{ S^+_{i,j} \}_{i,j}$ may overlap. 
Thus, we replace it by a subfamily $\{ \widetilde{S}^+_{i,j} \}_{i,j}$ of pairwise disjoint rectangles, which is constructed in the following way.
At the first step, choose $\{ S^+_{1,j} \}_{j}$ and denote it by $\{ \widetilde{S}^+_{1,j} \}_j$. 
Then consider the collection $\{ S^+_{2,j} \}_{j}$ where each $S^+_{2,j}$ either intersects some $\widetilde{S}^+_{1,j}$ or does not intersect any $\widetilde{S}^+_{1,j}$. 
Select the rectangles $S^+_{2,j}$ that do not intersect any $\widetilde{S}^+_{1,j}$, and denote the obtained collection by $\{ \widetilde{S}^+_{2,j} \}_j$.
At the $i$th step, choose those $S^+_{i,j}$ that do not intersect any previously selected $\widetilde{S}^+_{i',j}$, $i' < i$.
Hence, we obtain a collection $\{ \widetilde{S}^+_{i,j} \}_{i,j}$ of pairwise disjoint rectangles.
Observe that for every $S^+_{i,j}$ there exists $\widetilde{S}^+_{i',j}$ with $i' < i$ such that
\begin{equation}
\label{gehring:plussubset}
\text{pr}_x(S^+_{i,j}) \subset \text{pr}_x(\widetilde{S}^+_{i',j}) \quad \text{and} \quad \text{pr}_t(S^+_{i,j}) \subset 3 \text{pr}_t(\widetilde{S}^+_{i',j}) .
\end{equation}
Here pr$_x$ denotes the projection to $\mathbb R^n$ and pr$_t$ denotes the projection to the time axis.

Rename $\{ S^-_{i,j} \}_{i,j}$ and $\{ \widetilde{S}^+_{i,j} \}_{i,j}$ as $\{ S^-_{i} \}_{i}$ and $\{ \widetilde{S}^+_{j} \}_j$, respectively.
Note that $S^-_i$ is spatially contained in $S^+_i$, that is, $\text{pr}_x S^-_i\subset \text{pr}_x S^+_i$.
In the time direction, we have
\begin{equation}
\label{gehring:minusplussubset}
\text{pr}_t(S^-_i) \subset \text{pr}_t(R_i) 
\subset 7 \text{pr}_t(S^+_i) ,
\end{equation}
since
\[
( 7 + 1 ) \frac{l_t(S^+_i)}{2} \geq 8 \frac{L^p}{2^{pi+2}} = \frac{2L^p}{2^{pi}} = l_t(R_i) .
\]
Therefore, by~\eqref{gehring:plussubset} and~\eqref{gehring:minusplussubset}, it holds that
\begin{equation}
\label{gehring:subsetcubes}
\sum_i \lvert S^-_i \rvert = \Big\lvert \bigcup_i S^-_i \Big\rvert \leq c_1 \sum_j \lvert \widetilde{S}^+_j \rvert 
\quad\text{with}\quad
c_1 = 21.
\end{equation}

If $(x,t) \in R^-_0 \setminus \bigcup_i S^-_i$, then there exists a sequence $\{S^-_l\}_{l\in\mathbb N}$ of subrectangles containing $(x,t)$ such that 
\[
\frac{w(S^+_l)}{\lvert S^+_l \rvert} = \dashint_{S^+_l} w \leq \lambda
\]
and $\lvert S^-_l \rvert \to 0$ as $l \to \infty$.
The Lebesgue differentiation theorem~\cite[Lemma~2.3]{KinnunenMyyryYang2022}
implies that $w(x,t) \leq \lambda$
for almost every $(x,t) \in R^-_0 \setminus \bigcup_i S^-_i$.
It follows that
\begin{equation}
\label{gehring:levelsetsubset}
R^-_0 \cap \{ w > \lambda \} \subset \bigcup_i S^-_i
\end{equation}
up to a set of measure zero.

Consider $S^-_i$ and denote its parent by $S^-_{i-1}$, that is, $S^-_{i}$ was obtained by subdividing the previous $S^-_{i-1}$ for which $w_{S^+_{i-1}} \leq\lambda$.
We move the corresponding $R^+_i$ forward in time until the shifted rectangle is contained in $S^+_{i-1}$.
The time distance between the bottom of $R^+_i$ and the bottom of $S^+_{i-1}$ is bounded above by $2^{p+1} l_t(R^+_i)$.
The assumption \eqref{gehringRHI} with H\"older's inequality implies that $w(R^-) \leq C_1 w(R^+) $ for every parabolic rectangle $R$.
Thus, we can apply the proof of Lemma~\ref{lemma:timemove}~$(ii)$ with $\theta = 2^{p+1}$ to obtain
\begin{equation}
\label{gehring:shifting}
w(R^+_i) \leq 4 \max\{1, C_1^{1+ 2^{2p+1} }\} w(S^+_{i-1})
\end{equation}
for every $i\in\mathbb{N}$.

By using \eqref{gehring:levelsetsubset}, \eqref{gehringRHI}, \eqref{gehring:shifting} and \eqref{gehring:subsetcubes}, we obtain
\begin{equation}
\label{eq:gehring_estimate}
\begin{split}
\int_{R^-_0 \cap \{ w>\lambda \}} w^q &\leq \sum_i \int_{S^-_i} w^q \leq \sum_i \int_{R^-_i} w^q \leq C_1^q \sum_i \lvert R^-_i \rvert \biggl( \dashint_{R^+_i} w \biggr)^q \\
&\leq C_1^q 4^q \max\{1, C_1^{q(1+ 2^{2p+1}) }\} \sum_i \lvert R^-_i \rvert \biggl( \frac{\lvert S^+_{i-1} \rvert}{\lvert R^+_i \rvert} \dashint_{S^+_{i-1}} w \biggr)^q \\
&\leq c_2 \lambda^q \sum_i \lvert R^-_i \rvert \leq 2 c_2 \lambda^q \sum_i \lvert S^-_i \rvert \\
&\leq 2 c_1 c_2 \lambda^q \sum_j \lvert \widetilde{S}^+_j \rvert ,
\end{split}
\end{equation}
where $c_2 = 2^{q(2+n+p)} C_1^q \max\{1, C_1^{q(1+ 2^{2p+1}) }\} $. 
We have
\begin{align*}
\lvert \widetilde{S}^+_j \rvert &\leq \frac{1}{\lambda} \int_{\widetilde{S}^+_j} w = \frac{1}{\lambda} \int_{\widetilde{S}^+_j \cap \{w>\lambda/2\}} w + \frac{1}{\lambda} \int_{\widetilde{S}^+_j \cap \{w\leq\lambda/2\}} w \\
&\leq \frac{1}{\lambda} \int_{\widetilde{S}^+_j \cap \{w>\lambda/2\}} w + \frac{1}{\lambda} \int_{\widetilde{S}^+_j \cap \{w\leq\lambda/2\}} \frac{\lambda}{2} \\
&\leq \frac{1}{\lambda} \int_{\widetilde{S}^+_j \cap \{w>\lambda/2\}} w + \frac{1}{2} \lvert \widetilde{S}^+_j \rvert ,
\end{align*}
and thus
\[
\lvert \widetilde{S}^+_j \rvert \leq \frac{2}{\lambda} \int_{\widetilde{S}^+_j \cap \{w>\lambda/2\}} w .
\]
From this and \eqref{eq:gehring_estimate},
it follows that
\begin{align*}
\int_{R^-_0 \cap \{ w>\lambda \}} w^q &\leq 2 c_1 c_2 \lambda^q \sum_j \lvert \widetilde{S}^+_j \rvert \leq 4 c_1 c_2 \lambda^{q-1} \sum_j \int_{\widetilde{S}^+_j \cap \{w>\lambda/2\}} w \\
&= 4 c_1 c_2 \lambda^{q-1} \int_{\bigcup_j \widetilde{S}^+_j \cap \{w>\lambda/2\}} w
\leq 4 c_1 c_2 \lambda^{q-1} \int_{R_0 \cap \{w>\lambda/2\}} w ,
\end{align*}
since $\widetilde{S}^+_j$ are pairwise disjoint.
On the other hand, we have
\begin{align*}
\int_{R^-_0 \cap \{\lambda\geq w>\lambda/2\}} w^q = \int_{R^-_0 \cap \{\lambda\geq w>\lambda/2\}} w^{q-1} w \leq \lambda^{q-1} \int_{R_0 \cap \{w>\lambda/2\}} w .
\end{align*}
Combining the two previous estimates, we get
\begin{align*}
\int_{R^-_0 \cap \{w>\lambda/2\}} w^q 
&= \int_{R^-_0 \cap \{w>\lambda\}} w^q + \int_{R^-_0 \cap \{\lambda\geq w>\lambda/2\}} w^q\\
& \leq c_3 \biggl( \frac{\lambda}{2} \biggr)^{q-1} \int_{R_0 \cap \{w>\lambda/2\}} w
\end{align*}
for $\lambda\geq w_{R^+_0}$, 
where $c_3 = 2^{q-1} (4 c_1 c_2 + 1)$.
Since this holds for any parabolic rectangle $R_0$, we may apply Lemma~\ref{selfimprovelemma} 
which states that there exist $\varepsilon>0$ and $C>1$ such that
\begin{align*}
\int_{R^-} w^{q+\varepsilon} 
&\leq C \biggl( \dashint_{R} w \biggr)^\varepsilon \int_{R} w^q \\
&=\frac{C}{2^{\varepsilon}} \biggl( \dashint_{R^-} w+\dashint_{R^+} w \biggr)^\varepsilon \int_{R} w^q \\
&\le\frac{C}{2^{\varepsilon}} \biggl(C_1\dashint_{R^+} w+\dashint_{R^+} w \biggr)^\varepsilon \int_{R} w^q \\
&\le\frac{C(C_1+1)^{\varepsilon}}{2^{\varepsilon}}\biggl(\dashint_{R^+} w\biggr)^\varepsilon \int_{R} w^q 
\end{align*}
for every parabolic rectangle $R \subset \mathbb{R}^{n+1}$.
Here we also apply \eqref{gehringRHI}.
We estimate the second integral on the right-hand side similarly to get
\begin{align*}
\int_{R} w^q &= \int_{R^-} w^q + \int_{R^+} w^q \\
&\leq C_1^q \lvert R^- \rvert \biggl( \dashint_{R^+} w \biggr)^q + C_1^q \lvert R^+ \rvert \biggl( \dashint_{R^{++}} w \biggr)^q \\
&\leq C_1^{2q} \lvert R^- \rvert \biggl( \dashint_{R^{++}} w \biggr)^q + C_1^q \lvert R^- \rvert \biggl( \dashint_{R^{++}} w \biggr)^q \\
&= C_2 \lvert R^- \rvert \biggl( \dashint_{R^{++}} w \biggr)^q ,
\end{align*}
where $R^{++} = R^+ + (0, l_t(R^+))$ and $C_2 =C_1^{2q}+C_1^{q}$.
Therefore, we have
\begin{align*}
\int_{R^-} w^{q+\varepsilon} 
&\leq \frac{C(C_1+1)^{\varepsilon}}{2^{\varepsilon}} \biggl( \dashint_{R^+} w \biggr)^\varepsilon \int_{R} w^q \\
&\leq \frac{C(C_1+1)^{\varepsilon}}{2^{\varepsilon}} C_1^\varepsilon C_2 \lvert R^- \rvert \biggl( \dashint_{R^{++}} w \biggr)^\varepsilon  \biggl( \dashint_{R^{++}} w \biggr)^q \\
&= C_3^{q+\varepsilon} \lvert R^- \rvert \biggl( \dashint_{R^{++}} w \biggr)^{q+\varepsilon} ,
\end{align*}
where $C_3^{q+\varepsilon} = 2^{-\varepsilon}C(C_1+1)^{\varepsilon}C_1^\varepsilon C_2$.
We conclude that
\[
\biggl( \dashint_{R^-} w^{q+\varepsilon} \biggr)^\frac{1}{q+\varepsilon} \leq C_3 \dashint_{R^{++}} w 
\]
for every parabolic rectangle $R \subset \mathbb{R}^{n+1}$.
It is left to replace $R^{++}$ by $R^+$ in the estimate above. This is done by the following argument.

Fix $R_0 = Q(x_0,L) \times (t_0 - L^p, t_0 + L^p) \subset \mathbb{R}^{n+1}$.
We cover $R^-_0$ by $M = 2^{n+1}$ rectangles $R^-_{i}$ with spatial side length $l_x = L/2^{1/p}$ and time length $l_t = L^p / 2$. This can be done by dividing each spatial edge of $R^-_0$ into two equally long intervals that may overlap each other, and the time interval of $R^-_0$ into two equally long pairwise disjoint intervals.
Observe that every $R^{++}_i$ is contained in $R^+_0$ and
the overlap of $R^{++}_{i}$ is bounded by $M/2 = 2^n$.
Then it holds that
\begin{align*}
\biggl( \dashint_{R^-_0} w^{q+\varepsilon} \biggr)^\frac{1}{q+\varepsilon} &\leq \biggl( \sum_i \frac{\lvert R^-_i \rvert}{\lvert R^-_0 \rvert} \dashint_{R^-_i} w^{q+\varepsilon} \biggr)^\frac{1}{q+\varepsilon} \\&\leq 2^{-(\frac{n}{p}+1)/(q+\varepsilon)}  \sum_i \biggl( \dashint_{R^-_i} w^{q+\varepsilon} \biggr)^\frac{1}{q+\varepsilon} \\
&\leq 2^{-(\frac{n}{p}+1)/(q+\varepsilon)} C_3 \sum_i \dashint_{R^{++}_i} w \\
&= 2^{-(\frac{n}{p}+1)/(q+\varepsilon)} C_3 \sum_i \frac{\lvert R^+_0 \rvert}{\lvert R^{++}_i \rvert} \frac{1}{\lvert R^+_0 \rvert} \int_{R^{++}_i} w \\
&\leq 2^{-(\frac{n}{p}+1)/(q+\varepsilon)} C_3 2^{\frac{n}{p}+1} \frac{M}{2} \dashint_{R^{+}_0} w \\
&= 2^{n+(\frac{n}{p}+1)(1-1/(q+\varepsilon))} C_3 \dashint_{R^{+}_0} w .
\end{align*}
This completes the proof.
\end{proof}

In addition to the self-improvement of the exponent on the left-hand side of the parabolic reverse H\"older inequality,
we observe that the exponent on the right-hand side can be replaced by any smaller positive exponent. 
For the elliptic case, for example, see \cite[Lemma 3.38]{Heinonen_et_al}.

\begin{theorem}
\label{thm:RHI-(1,s)}
Let $1<q<\infty$ and $w$ be a weight.
Assume that there exists a constant $C_1>0$ such that for every parabolic rectangle $R\subset\mathbb{R}^{n+1}$ we have
\begin{equation}
\label{thm:RHIforRHS}
\biggl( \dashint_{R^-} w^q \biggr)^\frac{1}{q} \leq C_1 \dashint_{R^+} w .
\end{equation}
Then for every $0<s<1$ there exists a constant $C=C(n,p,q,s,C_1)$ such that for every $R\subset\mathbb{R}^{n+1}$ we have
\[
\biggl( \dashint_{R^-} w^{q} \biggr)^\frac{1}{q} \leq C \biggl( \dashint_{R^+} w^s \biggr)^\frac{1}{s} .
\]
\end{theorem}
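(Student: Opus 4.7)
The plan is to combine the parabolic Gehring lemma (Theorem~\ref{gehring}) with a Hölder interpolation argument that introduces $w^s$ on the right. By Theorem~\ref{gehring} applied to the hypothesis~\eqref{thm:RHIforRHS}, there exists $\varepsilon=\varepsilon(n,p,q,C_1)>0$ and $C_2=C_2(n,p,q,C_1)$ such that
\[
\biggl(\dashint_{R^-} w^{q+\varepsilon}\biggr)^{\frac{1}{q+\varepsilon}} \leq C_2 \dashint_{R^+} w
\]
for every parabolic rectangle $R\subset\mathbb{R}^{n+1}$.

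Next, for a fixed $s\in(0,1)$, split the integrand as $w^q = w^\sigma \cdot w^{q-\sigma}$ and apply Hölder's inequality on $R^-$ with conjugate exponents chosen so that $\sigma\cdot p_1 = s$ and $(q-\sigma)\cdot p_2 = q+\varepsilon$. The constraint $1/p_1 + 1/p_2 = 1$ forces
\[
\frac{\sigma}{s} + \frac{q-\sigma}{q+\varepsilon} = 1, \quad\text{i.e.,}\quad \sigma = \frac{s\varepsilon}{q+\varepsilon-s}\in(0,q).
\]
This yields
\[
\dashint_{R^-} w^q \leq \biggl(\dashint_{R^-} w^s\biggr)^{\sigma/s} \biggl(\dashint_{R^-} w^{q+\varepsilon}\biggr)^{(q-\sigma)/(q+\varepsilon)}.
\]
Using the self-improved inequality on the last factor and raising to the power $1/q$ gives the intermediate estimate
\[
\biggl(\dashint_{R^-} w^q\biggr)^{1/q} \leq C_3 \biggl(\dashint_{R^-} w^s\biggr)^{\sigma/(sq)} \biggl(\dashint_{R^+} w\biggr)^{(q-\sigma)/q}.
\]

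The final step is to convert the right-hand side, which still contains $\dashint_{R^-} w^s$ and $\dashint_{R^+} w$, into the target $(\dashint_{R^+} w^s)^{1/s}$. This is done by iterating the preceding Hölder interpolation: applying it to $\dashint_{R^+} w$ using $RH^+_{q+\varepsilon}$ on the shifted rectangle $\widetilde R = R + (0, L^p)$ (for which $\widetilde R^- = R^+$) produces a recursive estimate of the form
\[
A_k \leq C\, Z_k^{\theta} A_{k+1}^{1-\theta},
\]
where $A_k = \dashint_{R^{+_k}} w$ and $Z_k = (\dashint_{R^{+_k}} w^s)^{1/s}$ denote the averages over the $k$-th forward time-shift of $R^+$, and $\theta\in(0,1)$. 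Iterating and applying the weighted AM-GM inequality to the resulting product $\prod Z_k^{\theta(1-\theta)^k}$ (whose exponents sum to $1$), together with the forward-in-time doubling of $w$ from Lemma~\ref{lemma:timemove}~(ii), closes the iteration and delivers the bound.

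The main obstacle is controlling the shifted averages: the iteration only closes if the growth of $A_k$ and $Z_k$ in $k$ is slower than the decay of $(1-\theta)^k$. This is the technical heart of the argument and is precisely where the forward-in-time doubling consequence of $RH^+_q$ is indispensable. An alternative cleaner route would be to directly modify the Calderón--Zygmund decomposition in the proof of Theorem~\ref{gehring} so that the selection criterion and level-set estimates are phrased in terms of $(\dashint w^s)^{1/s}$ instead of $\dashint w$, at the cost of tracking $s$-dependent constants throughout.
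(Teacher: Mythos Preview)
Your proposal diverges from the paper's argument and the final iteration step contains a genuine gap. The paper does not invoke the Gehring lemma; it derives directly from H\"older's and Young's inequalities together with~\eqref{thm:RHIforRHS} the additive estimate
\[
\int_{R^-} w \leq \varepsilon^{1-\frac{1}{\theta}} \lvert R^- \rvert \biggl( \dashint_{R^-} w^s \biggr)^{\frac{1}{s}} + C_1 \varepsilon \int_{R^+} w ,
\qquad \theta = \frac{s(q-1)}{q-s},
\]
and then iterates this by covering $R_0^-$ with $M=2^{n+1}$ half-scale rectangles $R_{1,j}^-$, then each $R_{1,j}^+$ with further half-scale rectangles $R_{2,j}^-$, and so on. The decisive feature is that every $R_{i,j}$ remains inside $R_0$, so every $(\dashint_{R_{i,j}^-} w^s)^{1/s}$ is controlled by $(\dashint_{R_0} w^s)^{1/s}$ up to a geometric factor, and the residual term $(C_1\varepsilon M/2)^N \int_{R_0} w$ tends to zero for small $\varepsilon$.

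Your forward-in-time iteration $A_k \leq C Z_k^\theta A_{k+1}^{1-\theta}$ cannot be closed with the tools you cite. Unwinding it gives
\[
\log A_1 \leq \sum_{k=1}^{N} (1-\theta)^{k-1}\bigl(\log C + \theta \log Z_k\bigr) + (1-\theta)^{N}\log A_{N+1},
\]
so convergence requires both $(1-\theta)^N\log A_{N+1}\to 0$ and control of $\sum (1-\theta)^{k}\log Z_k$ in terms of $\log Z_1$. But $RH_q^+$ and Lemma~\ref{lemma:timemove}~(ii) yield only \emph{forward} doubling, $w(R^-)\leq C w(R^+)$: this is a lower bound on forward shifts, never an upper bound. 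Weights in $RH_q^+$ may grow arbitrarily fast in the forward time direction (for instance $w(x,t)=\exp(e^t)$ satisfies $RH_q^+$), so $\log A_N$ can be doubly exponential in $N$ and the tail $(1-\theta)^N\log A_{N+1}$ need not vanish once the side length $L$ is large. For the same reason there is no way to dominate $Z_k$ by $Z_1$. Your suggested fix via Lemma~\ref{lemma:timemove}~(ii) goes in the wrong direction. The remedy is precisely the paper's device: replace the forward-translation iteration by a shrinking-scale iteration that never leaves the original rectangle, so a~priori finiteness and summability are automatic.
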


\begin{proof}
Let $R \subset \mathbb R^{n+1}$ be a parabolic rectangle.
Fix $0<s<1$. Let $\theta=s(q-1)/(q-s)$, that is,
\[
1 = \frac{\theta}{s} + \frac{1-\theta}{q} .
\]
We apply H\"older's inequality, Young's inequality
\[
ab \leq \varepsilon a^r + \varepsilon^{-\frac{1}{r-1}} b^\frac{r}{r-1}
\]
with $r=1/(1-\theta)$ and~\eqref{thm:RHIforRHS} to get
\begin{align*}
\dashint_{R^-} w 
&= \dashint_{R^-} w^\theta w^{1-\theta} 
\leq \biggl( \dashint_{R^-} w^s \biggr)^\frac{\theta}{s} \biggl( \dashint_{R^-} w^q \biggr)^\frac{1-\theta}{q}\\
&\leq \varepsilon^{1-\frac{1}{\theta}} \biggl( \dashint_{R^-} w^s \biggr)^\frac{1}{s} + \varepsilon \biggl( \dashint_{R^-} w^q \biggr)^\frac{1}{q} \\
&\leq \varepsilon^{1-\frac{1}{\theta}} \biggl( \dashint_{R^-} w^s \biggr)^\frac{1}{s} + C_1 \varepsilon \dashint_{R^+} w .
\end{align*}
Hence, we have
\begin{equation}
\label{iteration-(1,s)-RHI}
\int_{R^-} w \leq \varepsilon^{1-\frac{1}{\theta}} \lvert R^- \rvert \biggl( \dashint_{R^-} w^s \biggr)^\frac{1}{s} + C_1 \varepsilon \int_{R^+} w .
\end{equation}

Fix $R_0 = Q(x_0,L) \times (t_0 - L^p, t_0 + L^p) \subset \mathbb{R}^{n+1}$.
We cover $R^-_0$ by $M = 2^{n+1}$ rectangles $R^-_{1,j}$ with spatial side length $l_x = L/2^{1/p}$ and time length $l_t = L^p / 2$. This can be done by dividing each spatial edge of $R^-_0$ into two equally long intervals that may overlap each other, and the time interval of $R^-_0$ into two equally long pairwise disjoint intervals.
Observe that the overlap of $R^-_{1,j}$ is bounded by $M/2 = 2^n$.
Then consider $R^+_{1,j}$ and cover it in the same way as before by $M$ rectangles $R^-_{2,j}$ with spatial side length $l_x = L/2^{2/p}$ and time length $l_t = L^p / 2^2$.
At the $i$th step, cover $R^+_{i-1,j}$ by $M$ rectangles $R^-_{i,j}$ with spatial side length $l_x = L/2^{i/p}$ and time length $l_t = L^p / 2^i$ such that their overlap is bounded by $M/2$.
Note that every $R_{i,j}$ is contained in $R_0$.
Then iterating~\eqref{iteration-(1,s)-RHI} we obtain
\begin{align*}
\int_{R^-_0} w &\leq \sum_{j=1}^{M} \int_{R^-_{1,j}} w  \leq \sum_{j=1}^{M} \varepsilon^{1-\frac{1}{\theta}} \lvert R^-_{1,j} \rvert \biggl( \dashint_{R^-_{1,j}} w^s \biggr)^\frac{1}{s} + \sum_{j=1}^{M} C_1 \varepsilon \int_{R^+_{1,j}} w \\
&\leq \varepsilon^{1-\frac{1}{\theta}} \sum_{j=1}^M \lvert R^-_{1,j} \rvert \biggl( \dashint_{R^-_{1,j}} w^s \biggr)^\frac{1}{s} + C_1 \varepsilon \sum_{j=1}^{M^2} \int_{R^-_{2,j}} w \\
&\leq \varepsilon^{1-\frac{1}{\theta}} \sum_{j=1}^M \lvert R^-_{1,j} \rvert \biggl( \dashint_{R^-_{1,j}} w^s \biggr)^\frac{1}{s}\\
&\qquad + C_1 \varepsilon \sum_{j=1}^{M^2} \biggl( \varepsilon^{1-\frac{1}{\theta}} \lvert R^-_{2,j} \rvert \biggl( \dashint_{R^-_{2,j}} w^s \biggr)^\frac{1}{s} + C_1 \varepsilon \int_{R^+_{2,j}} w \biggr) \\
&= \varepsilon^{1-\frac{1}{\theta}} \sum_{j=1}^M \lvert R^-_{1,j} \rvert \biggl( \dashint_{R^-_{1,j}} w^s \biggr)^\frac{1}{s} \\
&\qquad+ \varepsilon^{1-\frac{1}{\theta}} C_1 \varepsilon \sum_{j=1}^{M^2} \lvert R^-_{2,j} \rvert \biggl( \dashint_{R^-_{2,j}} w^s \biggr)^\frac{1}{s} + (C_1 \varepsilon)^2 \sum_{j=1}^{M^2} \int_{R^+_{2,j}} w \\
&\leq \varepsilon^{1-\frac{1}{\theta}} \sum_{i=1}^N \biggl( (C_1 \varepsilon)^{i-1} \sum_{j=1}^{M^i} \lvert R^-_{i,j} \rvert \biggl( \dashint_{R^-_{i,j}} w^s \biggr)^\frac{1}{s} \biggr) + (C_1 \varepsilon)^N \sum_{j=1}^{M^N} \int_{R^+_{N,j}} w \\
&\leq \varepsilon^{1-\frac{1}{\theta}} \sum_{i=1}^N \biggl( (C_1 \varepsilon)^{i-1} \sum_{j=1}^{M^i} \lvert R^-_{i,j} \rvert \biggl( \dashint_{R^-_{i,j}} w^s \biggr)^\frac{1}{s} \biggr) + 
\biggl( C_1 \varepsilon \frac{M}{2} \biggr)^N 
\int_{R_0} w \\
&= I + II .
\end{align*}
We observe that $II$ tends to zero if $\varepsilon < 2/(C_1 M) = 1/(C_1 2^n)$ as $N \to \infty$.
For the inner sum of the first term $I$, we have 
\begin{align*}
\sum_{j=1}^{M^i} \lvert R^-_{i,j} \rvert \biggl( \dashint_{R^-_{i,j}} w^s \biggr)^\frac{1}{s}
&= \sum_{j=1}^{M^i} \lvert R^-_{i,j} \rvert^{1-\frac{1}{s}} \biggl( \int_{R^-_{i,j}} w^s \biggr)^\frac{1}{s}\\
&\leq \sum_{j=1}^{M^i} 2^{(\frac{n}{p}+1)(\frac{1}{s}-1) i} \lvert R^-_0 \rvert^{1-\frac{1}{s}} \biggl( \int_{R^-_{i,j}} w^s \biggr)^\frac{1}{s} \\
&\leq 2^{(\frac{n}{p}+1)(\frac{1}{s}-1) i +\frac{1}{s}} \biggl( \frac{M}{2} \biggr)^i \lvert R^-_0 \rvert \biggl( \dashint_{R_{0}} w^s \biggr)^\frac{1}{s} .
\end{align*}
Thus, it follows that
\begin{align*}
I \leq \varepsilon^{1-\frac{1}{\theta}} 2^{\frac{1}{s}} \lvert R^-_0 \rvert \biggl( \dashint_{R_{0}} w^s \biggr)^\frac{1}{s} \sum_{i=1}^N (C_1 \varepsilon)^{i-1} 2^{(\frac{n}{p}+1)(\frac{1}{s}-1) i} \biggl( \frac{M}{2} \biggr)^i .
\end{align*}
We estimate the sum by
\begin{align*}
\sum_{i=1}^N (C_1 \varepsilon)^{i-1} 2^{(\frac{n}{p}+1)(\frac{1}{s}-1) i} \biggl( \frac{M}{2} \biggr)^i
&= 2^{(\frac{n}{p}+1)(\frac{1}{s}-1) +n} \sum_{i=0}^{N-1} \bigl( C_1 \varepsilon 2^{(\frac{n}{p}+1)(\frac{1}{s}-1) +n} \bigr)^i \\
&= \frac{2^{(\frac{n}{p}+1)(\frac{1}{s}-1) +n }}{1-C_1 \varepsilon 2^{(\frac{n}{p}+1)(\frac{1}{s}-1) + n } } ,
\end{align*}
whenever
$\varepsilon < 1/ (C_1 2^{(\frac{n}{p}+1)(\frac{1}{s}-1) + n })$.
Then it holds that
\begin{align*}
\int_{R^-_0} w \leq \varepsilon^{1-\frac{1}{\theta}} \frac{ 2^{(\frac{n}{p}+1)(\frac{1}{s}-1) +n +\frac{1}{s} }}{1-C_1 \varepsilon 2^{(\frac{n}{p}+1)(\frac{1}{s}-1) + n } }  \lvert R^-_0 \rvert \biggl( \dashint_{R_{0}} w^s \biggr)^\frac{1}{s}
\end{align*}
for 
\[
0< \varepsilon < \min\biggl\{ \frac{1}{C_1 2^n} , \frac{1}{C_1 2^{(\frac{n}{p}+1)(\frac{1}{s}-1) + n } } \biggr\}
= \frac{1}{C_1 2^{(\frac{n}{p}+1)(\frac{1}{s}-1) + n } } .
\]
Choose $\varepsilon = 1/ (C_1 2^{(\frac{n}{p}+1)(\frac{1}{s}-1) + n +1}) $.
By the arbitrariness of $R_0$ and~\eqref{thm:RHIforRHS}, we conclude that
\begin{equation}
\label{eq:(q,s)-RHI}
\biggl( \dashint_{R^{--}} w^{q} \biggr)^\frac{1}{q} \leq C_1 \dashint_{R^-} w \leq C \biggl( \dashint_{R} w^s \biggr)^\frac{1}{s}
\end{equation}
for every parabolic rectangle $R \subset \mathbb{R}^{n+1}$,
where $R^{--} = R^- - l_t(R^-)$ and
\[
C = C_1 \varepsilon^{\frac{q(s-1)}{s(q-1)}} \frac{  2^{(\frac{n}{p}+1)(\frac{1}{s}-1) +n +\frac{1}{s} }}{1-C_1 \varepsilon 2^{(\frac{n}{p}+1)(\frac{1}{s}-1) + n } } .
\]

Fix $R_0 = Q(x_0,L) \times (t_0 - L^p, t_0 + L^p) \subset \mathbb{R}^{n+1}$.
We cover $Q(x_0,L) \times (t_0 - L^p, t_0 - L^p/2)$ by $2^n$ rectangles $R^{-}_{1,i}$ with spatial side length $l_x = L/2^{1/p}$ and time length $l_t = L^p / 2$ by dividing each edge of $Q(x_0,L)$ into two equally long intervals that may overlap each other. 
Denote $R^{--}_{2,i} = R^+_{1,i}$.
Observe that the union of $R^{--}_{2,i}$ covers $Q(x_0,L) \times (t_0 - L^p/2, t_0)$.
Moreover, note that every $R_{2,i}$ is contained in $R^+_0$.
Then by~\eqref{eq:(q,s)-RHI}, we have
\begin{align*}
\biggl( \dashint_{R^-_0} w^q \biggr)^\frac{1}{q} &\leq \biggl( \frac{\lvert R^-_{1,i} \rvert}{\lvert R^-_0 \rvert} \sum_i \biggl( \dashint_{R^-_{1,i}} w^q +  \dashint_{R^{--}_{2,i}} w^q \biggr) \biggr)^\frac{1}{q} \\
&\leq 2^{-(\frac{n}{p}+1)/q} \sum_i \biggl( \biggl( \dashint_{R^-_{1,i}} w^q \biggr)^\frac{1}{q} + \biggl( \dashint_{R^{--}_{2,i}} w^q \biggr)^\frac{1}{q} \biggr) \\
&\leq 2^{-(\frac{n}{p}+1)/q} (C_1+1) \sum_i \biggl( \dashint_{R^{--}_{2,i}} w^q \biggr)^\frac{1}{q} \\
&\leq 2^{-(\frac{n}{p}+1)/q} (C_1+1) C \sum_i \biggl( \dashint_{R_{2,i}} w^s \biggr)^\frac{1}{s} \\
&\leq 2^{-(\frac{n}{p}+1)/q} (C_1+1) C 2^n 2^{\frac{n}{p}\frac{1}{s}} \biggl( \dashint_{R^+_0} w^s \biggr)^\frac{1}{s} .
\end{align*}
This completes the proof.
\end{proof}

\section{Connection to parabolic Muckenhoupt weights}\label{section:muckenhoupt}

In this section, we show that the parabolic reverse H\"older inequality together with the following parabolic doubling condition implies the parabolic Muckenhoupt condition.
We recall the definition of parabolic Muckenhoupt classes $A^+_q$.

\begin{definition}
Let $1<q<\infty$ and $0<\gamma<1$.
A weight $w$ belongs to the parabolic Muckenhoupt class $A^+_q(\gamma)$ if
\[
[w]_{A^+_q(\gamma)} = 
\sup_{R \subset \mathbb R^{n+1}}
\biggl( \dashint_{R^-(\gamma)} w \biggr) \biggl( \dashint_{R^+(\gamma)} w^{\frac{1}{1-q}} \biggr)^{q-1} < \infty ,
\]
where the supremum is taken over all parabolic rectangles $R \subset \mathbb R^{n+1}$.
If the condition above holds with the time axis reversed, then $w \in A^-_q(\gamma)$.
\end{definition}

We say that a measure is forward in time parabolic doubling if
\begin{equation}
\label{eq:pardoubling}
w(R^-(\gamma)) \leq c_d w\Bigl(\frac{1}{2}R^+(\gamma)\Bigr)
\end{equation}
for every parabolic rectangle 
$R=R(x,t,L)\subset\mathbb{R}^{n+1}$,
where $c_d>0$ is the parabolic doubling constant.
Here 
\[
\frac{1}{2}R^+(\gamma)=Q\Bigl(x,\frac L2\Bigr)\times\biggl(t+\frac{1+\gamma}{2}L^p-\frac{1-\gamma}{2}\frac{L^p}{2^p},t+\frac{1+\gamma}{2}L^p+\frac{1-\gamma}{2}\frac{L^p}{2^p}\biggr).
\] 
Note that $\frac{1}{2}R^+(\gamma)$ has the same center as $R^+(\gamma)$, $\frac{1}{2}R^+(\gamma)\subset R^+(\gamma)$ and
$2^{n+p}\lvert\frac{1}{2}R^+(\gamma)\rvert=\lvert R^+(\gamma)\rvert$.
Moreover, there exists a parabolic rectangle $S$ such that $S^+(\gamma)=\frac{1}{2}R^+(\gamma)$.

\begin{lemma}
\label{lemma:measureweight-estimate}
Let $w$
be a weight 
satisfying 
\eqref{eq:pardoubling}
with $0<\gamma<1$.
Assume that there exist $0<\alpha<1$ and $0<\beta<2^{n+p-1}/c_d^2$ such that for every parabolic rectangle $R$ and every measurable set $E \subset R^-(\gamma)$ for which $\lvert E \rvert < \alpha \lvert R^-(\gamma) \rvert$ we have $w(E) < \beta w(R^+(\gamma))$.
Then there exist $\tau = \tau(p,\gamma) \geq 1$, $\rho=\rho(\alpha,\beta)<1$ and $c=c(n,p,\gamma,\alpha,\beta)$ such that for every parabolic rectangle $R=R(x,t,L) \subset \mathbb{R}^{n+1}$ and $\lambda \geq (w_{U^-})^{-1}$ we have
\[
\lvert R^{+}(\gamma) \cap \{ w^{-1} > \lambda \} \rvert \leq c \lambda w( R^{\tau} \cap \{ w^{-1} > \rho \lambda \}  ) ,
\]
where
\[
U^- = R^+(\gamma) - (0, \tau (1+\gamma) L^p)
\]
and
\[ 
R^{\tau} = Q(x,L) \times (t+\gamma L^p - \tau(1+\gamma)L^p , t+L^p) .
\]
Note that $U^- = R^-(\gamma)$ and $R^\tau = R$ for $\tau=1$.
\end{lemma}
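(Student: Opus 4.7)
The plan is to mirror the Calder\'on--Zygmund construction from Lemma~\ref{weightmeasure-estimate} with time direction reversed and with the roles of $w$ and $w^{-1}$ interchanged. Fix $R_0 = R(x_0,t_0,L)$, set $S^+_0 = R^+_0(\gamma)$, and let $\lambda \geq (w_{U^-_0})^{-1}$. Subdivide $S^+_0$ dyadically in space and with the alternative $\lfloor 2^p \rfloor$- or $\lceil 2^p \rceil$-fold subdivision in time used in the proofs of Lemmas~\ref{weightmeasure-estimate} and~\ref{lem:reverseweaktype}, producing subrectangles $S^+_i$. For each $S^+_i$ with spatial side length $L_i$, consider the \emph{backward} shifted rectangle $U^-_i$ (the precise backward analogue of $U^+_i$ in Lemma~\ref{weightmeasure-estimate}), and select those $S^+_i$ satisfying
\[
\dashint_{U^-_i} w \leq \frac{1}{\lambda}, \qquad \text{i.e.,} \qquad \bigl(w_{U^-_i}\bigr)^{-1} > \lambda .
\]
Continuing the selection inside the non-selected subrectangles yields a pairwise disjoint family $\{S^+_{i,j}\}_{i,j}$.

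The parameter $\tau = \tau(p,\gamma) \geq 1$ is fixed large enough that for every child $S^+_i \subset S^+_{i-1}$ one has $U^-_i \subset U^-_{i-1}$; this is the reversed-time analogue of the containment $U^+_i \subset U^-_{i-1}$ established in Lemma~\ref{weightmeasure-estimate} via $\tau = 1/(2^p-1)$. The collection $\{U^-_{i,j}\}$ may overlap, so it is replaced by a maximal pairwise disjoint subfamily $\{\widetilde U^-_{i,j}\}$ by the same Vitali-type selection procedure; the enlarged rectangle $R^\tau$ in the statement is the smallest parabolic rectangle containing $R$ together with all of these backward shifts, which explains the formulae for $U^-$ and $R^\tau$.

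By the Lebesgue differentiation theorem, $R^+_0(\gamma) \cap \{w^{-1} > \lambda\} \subset \bigcup_{i,j} S^+_{i,j}$ up to a null set. On each selected rectangle the selection criterion reads $w(U^-_{i,j}) \leq |U^-_{i,j}|/\lambda$; combined with a Chebyshev split one obtains $|U^-_{i,j} \cap \{w^{-1} > \rho \lambda\}| \geq (1-\rho) |U^-_{i,j}|$ for a suitable $\rho<1$. Applying Lemma~\ref{lemma:timemove}~$(i)$ to the complementary set, together with two applications of the forward-in-time doubling~\eqref{eq:pardoubling} to pass between the time-separated regions $U^-_{i,j}$ and $S^+_{i,j}$, yields a lower bound of the form
\[
w\bigl(U^-_{i,j} \cap \{w^{-1} > \rho\lambda\}\bigr) \geq \beta\, c_d^{-2}\, w(U^-_{i-1,j}) .
\]
Summing over $(i,j)$ using the bounded overlap of $\{S^+_{i,j}\}$, the selection criterion (which gives $|S^+_{i,j}| \lesssim \lambda w(U^-_{i-1,j})$), and the disjointness of $\{\widetilde U^-_{i,j}\} \subset R_0^\tau$, one arrives at the claimed inequality.

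The main obstacle is the dual role of the shift parameter $\tau$: it must be simultaneously large enough to guarantee $U^-_i \subset U^-_{i-1}$ at every generation so that the iteration closes, and it must depend only on $p$ and $\gamma$ so that all backward shifts stay inside a single enlargement $R_0^\tau$ appearing on the right-hand side. A secondary point is the numerology: the hypothesis $\beta < 2^{n+p-1}/c_d^2$ is precisely what is needed so that, after the two applications of~\eqref{eq:pardoubling} (each contributing a factor $c_d$) and the combinatorial factor $2^{n+p}$ from the dyadic subdivision, the Chebyshev constant $\rho$ available from the argument above can still be chosen strictly less than $1$.
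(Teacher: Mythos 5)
Your skeleton (stopping-time decomposition of $R^+_0(\gamma)$ with the criterion $(w_{U^-_i})^{-1}>\lambda$, Vitali-type disjointification, Lebesgue differentiation, summation) matches the paper's proof, but the two steps you wave at are exactly where the lemma is hard, and both are wrong as stated. First, no choice of $\tau\geq 1$ gives the nestedness $U^-_i\subset U^-_{i-1}$: for the topmost child the top of $U^-_i$ sits at depth $\tau(1+\gamma)L_i^p$ below the common top, while the top of $U^-_{i-1}$ sits at depth $\tau(1+\gamma)L_{i-1}^p=2^p\tau(1+\gamma)L_i^p$, so $U^-_i$ always protrudes above $U^-_{i-1}$, and enlarging $\tau$ only worsens this. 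The role of $\tau$ in the paper is different: since the parent's backward shift outruns the child's, the parent region $U^-_{i-1}$ lies \emph{earlier} in time than the child's region, and the only way to compare $w(U^-_{i-1})$ with $w$ near the child is the forward-in-time doubling \eqref{eq:pardoubling}. The explicit, large $\tau(p,\gamma)$ is computed precisely so that two doubling rectangles $P,V$ fit in the time gap, giving $w(U^-_{i-1})\leq c_d^2\,w(U^{--}_i)$ with $U^{--}_i=U^-_i-(0,(1+\gamma)L_i^p)$; nestedness of the $U^-$'s plays no part.

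Second, your key per-rectangle bound $w\bigl(U^-_{i,j}\cap\{w^{-1}>\rho\lambda\}\bigr)\geq \beta c_d^{-2}w(U^-_{i-1,j})$ is only asserted, and the justification via ``Lemma~\ref{lemma:timemove}~$(i)$ applied to the complementary set'' does not work: that lemma (equivalently the hypothesis) converts a small Lebesgue measure into a small $w$-measure, and applying it to the complement leaves you with $w(U^-)-\beta w(\text{upper part})$, for which you have no lower bound. The paper's actual argument introduces the extra shift $U^{--}_i$ so that $(U^{--}_i,U^-_i)$ form the lower/upper parts of a parabolic rectangle, uses \eqref{eq:lowerupperpart_bound} to get $\lvert U^{--}_i\cap\{\rho w>w_{U^-_i}\}\rvert<\alpha\lvert U^-_i\rvert$ with $\rho=\alpha/\max\{1,2\beta/\alpha\}$, applies the hypothesis to that set, and then combines the parent's non-selection ($w(U^-_{i-1})\geq\lvert U^-_{i-1}\rvert/\lambda$), the child's selection ($w_{U^-_i}<\lambda^{-1}$), and the double doubling to absorb: $\bigl(2^{n+p}/c_d^2-\beta\bigr)\lvert U^-_i\rvert\leq\lambda\,w\bigl(U^{--}_i\cap\{w^{-1}>\rho\lambda\}\bigr)$. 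This is also where the hypothesis $\beta<2^{n+p-1}/c_d^2$ enters — it keeps the absorbed constant positive — whereas in your sketch it is (incorrectly) attributed to making the Chebyshev parameter $\rho<1$; in the paper $\rho$ depends only on $\alpha,\beta$. So the proposal has the right outline but misses the two ideas that make the proof go: the doubling bridge with the explicitly engineered $\tau$, and the $U^{--}_i$/$U^-_i$ pairing that lets the measure hypothesis be applied at all.
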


\begin{proof}
Let $R_0=R(x_0,t_0,L) = Q(x_0,L) \times (t_0-L^p, t_0+L^p)$.
Denote $f=w^{-1}$ and $\dmu = w \dx \dt$.
Let $\tau\geq 1$ to be chosen later.
Denote $S^+_0 = R^+_0(\gamma)$. The time length of $S^+_0$ is $l_t(S^+_0) = (1-\gamma) L^p$.
We construct a parabolic Calder\'on--Zygmund decomposition at level $\lambda$.
We partition $S^+_0$ by dividing each spatial edge into $2$ equally long intervals. If
\[
\frac{l_t(S_{0}^+)}{\lceil 2^{p} \rceil} > \frac{(1-\gamma) L^p}{2^{p}},
\]
we divide the time interval of $S^+_0$ into $\lceil 2^{p} \rceil$ equally long intervals. Otherwise, we divide the time interval of $S^+_0$ into $\lfloor 2^{p} \rfloor$ equally long intervals.
We obtain subrectangles $S^+_1$ of $S^+_0$ with spatial side length $L_1 = l_x(S^+_1) = l_x(S^+_0)/2 = L / 2$
and time length either
\[
l_t(S^+_1) = \frac{l_t(S^+_0)}{\lceil 2^{p} \rceil} = \frac{(1-\gamma) L^p}{\lceil 2^{p} \rceil} \quad \text{or} \quad l_t(S^+_1) = \frac{(1-\gamma) L^p}{\lfloor 2^{p} \rfloor} .
\]
For every $S^+_1$, there exists a unique rectangle $R_1$ with spatial side length $L_1 = L / 2$ 
and time length $2L_1^p = 2 L^p / 2^{p}$
such that $R_1$ has the same top as $S^+_1$.
Let $U_1^- = R_1^+(\gamma) - (0, \tau (1+\gamma) L_1^p ) $.
We select those rectangles $S^+_1$ for which
\[
\frac{\lvert U^-_1 \rvert}{w(U^-_1)} = \dashint_{U^-_1} f \dmu > \lambda
\]
and denote the obtained collection by $\{ S^+_{1,j} \}_j$.
If
\[
\frac{\lvert U^-_1 \rvert}{w(U^-_1)} = \dashint_{U^-_1} f \dmu \leq \lambda ,
\]
we subdivide $S^+_1$ in the same manner as above
and select all those subrectangles $S^+_2$ for which
\[
\frac{\lvert U^-_2 \rvert}{w(U^-_2)} = \dashint_{U^-_2} f \dmu > \lambda
\]
to obtain family $\{ S^+_{2,j} \}_j$.
We continue this selection process recursively.
At the $i$th step, we partition unselected rectangles $S^+_{i-1}$ by dividing each spatial side into $2$ equally long intervals. If
\begin{equation}
\label{eq:Aqproof_eq1}
\frac{l_t(S_{i-1}^+)}{\lceil 2^{p} \rceil} > \frac{(1-\gamma) L^p}{2^{pi}},
\end{equation}
we divide the time interval of $S^+_{i-1}$ into $\lceil 2^{p} \rceil$ equally long intervals. 
Otherwise, if
\begin{equation}
\label{eq:Aqproof_eq2}
\frac{l_t(S_{i-1}^+)}{\lceil 2^{p} \rceil} \leq \frac{(1-\gamma) L^p}{2^{pi}},
\end{equation}
we divide the time interval of $S^+_{i-1}$ into $\lfloor 2^{p} \rfloor$ equally long intervals.
We obtain subrectangles $S^+_i$. For every $S^+_i$, there exists a unique rectangle $R_i$ with spatial side length $L_i = L / 2^{i}$
and time length $2L_i^p = 2 L^p / 2^{pi}$
such that $R_i$ has the same top as $S^+_i$.
Let $U_i^- = R_i^+(\gamma) - (0, \tau (1+\gamma) L_i^p )$.
Select those $S^+_i$ for which 
\[
\frac{\lvert U^-_i \rvert}{w(U^-_i)} = \dashint_{U^-_i} f \dmu > \lambda
\]
and denote the obtained collection by $\{ S^+_{i,j} \}_j$.
If
\[
\frac{\lvert U^-_i \rvert}{w(U^-_i)} = \dashint_{U^-_i} f \dmu \leq \lambda
\]
we continue the selection process in $S^+_i$.
In this manner we obtain a collection $\{S^+_{i,j} \}_{i,j}$ of pairwise disjoint rectangles.

Observe that if \eqref{eq:Aqproof_eq1} holds, then we have
\[
l_t(S_i^+) = \frac{l_t(S^+_{i-1})}{\lceil 2^{p} \rceil} \geq \frac{(1-\gamma) L^p}{2^{pi}}.
\]
On the other hand, if \eqref{eq:Aqproof_eq2} holds, then
\[
l_t(S_i^+) = \frac{l_t(S^+_{i-1})}{\lfloor 2^{p} \rfloor} \geq \frac{l_t(S^+_{i-1})}{2^{p}} \geq \dots \geq \frac{(1-\gamma)  L^p}{2^{pi}} .
\]
This gives a lower bound 
\[
l_t(S_i^+) \geq \frac{ (1-\gamma) L^p}{2^{pi}}
\]
for every $S_i^+$.

Suppose that \eqref{eq:Aqproof_eq2} is satisfied at the $i$th step.
Then we have an upper bound for the time length of $S_i^+$, since
\begin{align*}
l_t(S^+_i) = \frac{l_t(S_{i-1}^+)}{\lfloor 2^{p} \rfloor} \leq \frac{\lceil 2^{p} \rceil}{\lfloor 2^{p} \rfloor} \frac{(1-\gamma) L^p}{2^{pi}} \leq \biggl( 1+ \frac{1}{\lfloor 2^{p} \rfloor} \biggr) \frac{(1-\gamma) L^p}{2^{pi}} .
\end{align*}
On the other hand, if \eqref{eq:Aqproof_eq1} is satisfied, then
\[
l_t(S^+_i) = \frac{l_t(S_{i-1}^+)}{\lceil 2^{p} \rceil} \leq \frac{l_t(S_{i-1}^+)}{2^{p}}.
\]
In this case, \eqref{eq:Aqproof_eq2} has been satisfied at an earlier step $i'$ with $i'< i$.
We obtain
\begin{align*}
l_t(S^+_i) \leq \frac{l_t(S_{i-1}^+)}{ 2^{p}} \leq \dots \leq \frac{l_t(S_{i'}^+)}{ 2^{p(i-i')}} \leq \biggl( 1+ \frac{1}{\lfloor 2^{p} \rfloor} \biggr) \frac{(1-\gamma) L^p}{ 2^{pi}}
\end{align*}
by using the upper bound for $S_{i'}^+$.
Thus, we have
\[
\frac{(1-\gamma) L^p}{2^{pi}} \leq l_t(S^+_i) \leq \biggl( 1+ \frac{1}{\lfloor 2^{p} \rfloor} \biggr) \frac{(1-\gamma) L^p}{2^{pi}}
\]
for every $S^+_i$.

Let $U^{--}_i = U^-_i - (0,(1+\gamma)L_i^p)$.
We have a collection $\{ S^+_{i,j} \}_{i,j}$ of pairwise disjoint rectangles. 
However, the rectangles in the corresponding collections $\{ U^-_{i,j} \}_{i,j}$ and $\{ U^{--}_{i,j} \}_{i,j}$ may overlap. 
Thus, we replace them by subfamilies $\{ \widetilde{U}^-_{i,j} \}_{i,j}$ and $\{ \widetilde{U}^{--}_{i,j} \}_{i,j}$ of pairwise disjoint rectangles, which are constructed in the following way.
At the first step, choose $\{ U^-_{1,j} \}_{j}$ and $\{ U^{--}_{1,j} \}_{j}$ and denote them by $\{ \widetilde{U}^-_{1,j} \}_j$ and $\{ \widetilde{U}^{--}_{1,j} \}_j$. 
Then consider the collections $\{ U^-_{2,j} \}_{j}$ and $\{ U^{--}_{2,j} \}_{j}$ where each $U^-_{2,j}$ and $U^{--}_{2,j}$ either intersects some $\widetilde{U}^-_{1,j}$ or $\widetilde{U}^{--}_{1,j}$, or does not intersect any $\widetilde{U}^-_{1,j}$ or $\widetilde{U}^{--}_{1,j}$.
Select the pairs of rectangles $U^-_{2,j}$, $ U^{--}_{2,j}$ so that neither $U^-_{2,j}$ nor $U^{--}_{2,j}$ intersects any $\widetilde{U}^-_{1,j}$ or $\widetilde{U}^{--}_{1,j}$, and denote the obtained collections by $\{ \widetilde{U}^-_{2,j} \}_j$ and $\{ \widetilde{U}^{--}_{2,j} \}_j$.
At the $i$th step, choose those pairs $U^-_{i,j}$, $U^{--}_{i,j}$ so that neither $U^-_{i,j}$ nor $U^{--}_{i,j}$  intersects any previously selected $\widetilde{U}^-_{i',j}$ or $\widetilde{U}^{--}_{i',j}$, $i' < i$.
Hence, we obtain collections $\{ \widetilde{U}^-_{i,j} \}_{i,j}$ and $\{ \widetilde{U}^{--}_{i,j} \}_{i,j}$ of pairwise disjoint rectangles.
Observe that for every $U^-_{i,j}$ there exists $\widetilde{U}^-_{i',j}$ with $i' < i$ such that
\begin{equation}
\label{eq:Aqproof_plussubset}
\text{pr}_x(U^-_{i,j}) \subset \text{pr}_x(\widetilde{U}^-_{i',j}) \quad \text{and} \quad \text{pr}_t(U^-_{i,j}) \subset 
\biggl( 2\frac{1+\gamma}{1-\gamma} + 2^{1-p}+1 \biggr)
\text{pr}_t(\widetilde{U}^-_{i',j}) ,
\end{equation}
since
\[
\biggl( 2 \frac{1 + \gamma}{1-\gamma} + 2^{1-p} \biggr) \frac{l_t(\widetilde{U}^-_{i',j})}{2} 
= \frac{(1-\gamma)L^p}{2^{p(i'+1)}} + \frac{(1+\gamma)L^p}{2^{pi'}}  
\geq l_t(U^-_{i,j}) + \frac{(1+\gamma)L^p}{2^{pi'}} .
\]
Here pr$_x$ denotes the projection to $\mathbb R^n$ and pr$_t$ denotes the projection to the time axis.
Let 
\[
R^{\tau}_{i,j} = Q(x_{R_{i,j}} ,L_i) \times (t_{R_{i,j}} +\gamma L_i^p - \tau(1+\gamma)L_i^p , t_{R_{i,j}} +L_i^p).
\]
Note that $S^+_{i,j}$ is spatially contained in $U^-_{i,j}$, that is, $\text{pr}_x S^+_{i,j}\subset \text{pr}_x U^-_{i,j}$.
In the time direction, we have
\begin{equation}
\label{eq:Aqproof_minusplussubset}
\text{pr}_t(S^+_{i,j}) \subset \text{pr}_t(R^\tau_{i,j}) 
\subset \biggl( 2\tau \frac{1 + \gamma}{1-\gamma} +1 \biggr) \text{pr}_t(U^-_{i,j}) ,
\end{equation}
since
\[
\biggl( 2\tau \frac{1 + \gamma}{1-\gamma} + 2 \biggr) \frac{l_t(U^-_{i,j})}{2} = \frac{(1-\gamma)L^p}{2^{pi}} + \frac{\tau(1+\gamma)L^p}{2^{pi}} = l_t(R^{\tau}_{i,j}) .
\]
Therefore, by~\eqref{eq:Aqproof_plussubset} and~\eqref{eq:Aqproof_minusplussubset}, it holds that
\begin{equation}
\label{eq:Aqproof_start}
\Big\lvert \bigcup_{i,j} S^+_{i,j} \Big\rvert \leq c_1 \sum_{i,j} \lvert \widetilde{U}^-_{i,j} \rvert 
\quad\text{with}\quad
c_1 = 
\biggl( 2\frac{1+\gamma}{1-\gamma} + 2^{1-p}+1 \biggr)
\biggl( 2\tau \frac{1 + \gamma}{1-\gamma} +1 \biggr).
\end{equation}

For the rest of the proof and to simplify the notation, let $U^-_i = \widetilde{U}^-_{i,j}$ and $U^-_{i-1} = \widetilde{U}^-_{i-1,j'}$ be fixed, where $U^-_i$ was obtained by subdividing the previous $U^-_{i-1}$ for which $\lvert U^-_{i-1} \rvert / w(U^-_{i-1}) \leq \lambda$. 
Our goal is to apply the parabolic doubling property twice to reach from $U^{--}_i$ to $U_{i-1}^-$.
To this end, we create enough space in time by choosing $\tau$ large enough.
More precisely, let $\tau\geq1$ such that
\begin{align*}
\tau(1+\gamma)L^p &=
\frac{\tau(1+\gamma)L^p}{2^p} + \frac{(1+\gamma)L^p}{2^p} 
+ 2\gamma L^p+ \frac{1}{2}(1-\gamma)L^p \\
&\qquad
+ \frac{1}{2} \frac{(1-\gamma)L^p}{2^p}
+ 2^p 2\gamma L^p+ \frac{1}{2} 2^p (1-\gamma)L^p   \\
&\qquad 
+\frac{1}{2} (1-\gamma) L^p
+(1-\gamma)L^p
-\frac{(1-\gamma)L^p}{2^p} ,
\end{align*}
that is, 
\begin{align*}
\tau &= 
\frac{2^p}{2^p-1} \biggl( 2^{p-1} +2 +\frac{1}{2^{p+1}} + \Bigl(2^{p} -2 +\frac{1}{2^p}\Bigr) \frac{\gamma}{1+\gamma} \biggr) .
\end{align*}
With this choice,
we have enough space in time to apply the parabolic doubling condition to reach from $U^{--}_i$ to $U_{i-1}^-$.
More precisely, there exist two parabolic rectangles $P, V$ such that
$U_{i-1}^-\subset P^-(\gamma)$, $V^-(\gamma) =\frac{1}{2}P^+(\gamma)$ and $\frac{1}{2}V^+(\gamma) = U_i^{--}$.
Applying the parabolic doubling condition~\eqref{eq:pardoubling} twice, we obtain
\begin{align*}
w(U^-_{i-1}) \leq w(P^-(\gamma)) \leq c_d w(V^-(\gamma)) \leq c_d^2 w(U_i^{--}) .
\end{align*}
By \eqref{eq:lowerupperpart_bound} in the proof of Lemma~\ref{lemma:timemove}~$(ii)$,
we have
\begin{align*}
\lvert U_i^{--} \cap \{ \rho w > w_{U_i^-} \} \rvert 
&< \frac{\rho}{w_{U_i^-}} w(U_i^{--}) 
= \rho \frac{w(U_i^{--})}{w(U_i^-)} \lvert U_i^- \rvert\\
&\leq
\rho 
\max\Bigl\{1, 2\frac{\beta}{\alpha}\Bigr\}
\lvert U_i^- \rvert
= \alpha \lvert U_i^- \rvert ,
\end{align*}
where $\rho = \alpha / \max\{1, 2\beta/\alpha \} $.
Then by the assumption (qualitative measure condition) it holds that
\[
w(U_i^{--} \cap \{ \rho w > w_{U_i^-} \}) < \beta w(U_i^-) ,
\]
which implies
\[
w(U_i^{--}) < w(U_i^{--} \cap \{ \rho w \leq w_{U_i^-} \}) + \beta w(U_i^-) . 
\]
Combining the estimates above, we obtain
\begin{align*}
\frac{2^{n+p} }{c_d^2 \lambda} \lvert U_i^- \rvert
&= \frac{1}{c_d^2 \lambda} \lvert U_{i-1}^{-} \rvert
\leq \frac{1}{c_d^2} w(U_{i-1}^-)
\leq w(U_i^{--}) \\
&\leq w(U_i^{--} \cap \{ \rho w \leq w_{U_i^-} \}) + \beta w(U_i^-)
\\
&\leq w(U_i^{--} \cap \{ \rho w \leq w_{U_i^-} \}) + \frac{\beta}{\lambda} \lvert U_i^- \rvert ,
\end{align*}
and thus
\[
\biggl( \frac{2^{n+p}}{c_d^2} - \beta \biggr) \lvert U_i^- \rvert \leq \lambda w(U_i^{--} \cap \{ \rho w \leq w_{U_i^-} \}) .
\]
Since $\beta < 2^{n+p-1}/c_d^2$ and $w_{U^-_{i}} < \lambda^{-1} $, we have
\begin{equation}
\label{eq:Aqproof_sublevelestimate}
\lvert U_i^- \rvert \leq c_2 \lambda w(U_i^{--} \cap \{ \rho w \leq w_{U_i^-} \}) 
\leq
c_2 \lambda w(U_i^{--} \cap \{ w^{-1} > \rho \lambda \}) ,
\end{equation}
where $c_2 = c_d^2/2^{n+p-1} $.

If $(x,t) \in S^+_0 \setminus \bigcup_{i,j} S^+_{i,j}$, then there exists a sequence of subrectangles $S^+_l$ containing $(x,t)$ such that 
\[
\frac{\lvert U^-_l \rvert}{w(U^-_l)} = \dashint_{U^-_l} f \dmu \leq \lambda
\]
and $\lvert S^+_l \rvert \to 0$ as $l \to \infty$.
The Lebesgue differentiation theorem~\cite[Lemma~2.3]{KinnunenMyyryYang2022}
implies that $w^{-1} = f(x,t) \leq \lambda$
for almost every $(x,t) \in S^+_0 \setminus \bigcup_{i,j} S^+_{i,j}$.
It follows that
\[
S^+_0 \cap \{ w^{-1} > \lambda \} \subset \bigcup_{i,j} S^+_{i,j}
\]
up to a set of measure zero.
Using this together with~\eqref{eq:Aqproof_start} and~\eqref{eq:Aqproof_sublevelestimate}, we obtain
\begin{align*}
\lvert S^+_0 \cap \{ w^{-1} > \lambda \} \rvert &\leq 
c_1 \sum_{i,j} \lvert \widetilde{U}^-_{i,j} \rvert
\leq c_1 c_2 \lambda \sum_{i,j} w(\widetilde{U}^{--}_{i,j} \cap \{ w^{-1} > \rho \lambda \}) \\
&\leq c \lambda w(R_0^{\tau} \cap \{ w^{-1} > \rho \lambda \}) ,
\end{align*}
where $c = c_1 c_2 $.
This completes the proof.
\end{proof}

The following theorem shows that 
the parabolic reverse H\"older inequality
together with the parabolic doubling condition
implies the parabolic Muckenhoupt condition.

\begin{theorem}
\label{rhar}
Let $1<q<\infty$
and $w \in RH_q^+$
satisfying~\eqref{eq:pardoubling} with $0<\gamma<1$.
Then $w\in A^+_r(\gamma)$ for some $r>1$.
\end{theorem}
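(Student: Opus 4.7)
The plan is to apply Lemma~\ref{lemma:measureweight-estimate} to produce a Calder\'on--Zygmund-type good lambda inequality for $w^{-1}$, then convert it via Cavalieri's principle and a covering--iteration argument into a uniform upper bound for the $(-s)$-moment of $w$ over $R^+(\gamma)$ for some small $s>0$. Rewriting this bound produces the parabolic Muckenhoupt condition $A_r^+(\gamma)$ with $r=1+1/s$.

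The first task is to verify the hypothesis of Lemma~\ref{lemma:measureweight-estimate}: the qualitative measure condition with time lag $\gamma$ and smallness $\beta<2^{n+p-1}/c_d^2$. Since $w\in RH_q^+$, Lemma~\ref{lem:RHItimelag} gives the reverse H\"older inequality with the time lag $\gamma$. A H\"older's inequality argument parallel to the proof of Theorem~\ref{thm:quantiAinfty}, adapted to the time-lagged setting, then yields the quantitative measure estimate
\[
\frac{w(E)}{w(R^+(\gamma))}\leq C\Bigl(\frac{|E|}{|R^-(\gamma)|}\Bigr)^{1-1/q},\qquad E\subset R^-(\gamma),
\]
from which the required qualitative condition with any preassigned $\beta>0$ follows by choosing $\alpha$ small enough. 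Invoking Lemma~\ref{lemma:measureweight-estimate} then produces constants $\tau\geq 1$, $\rho\in(0,1)$ and $c>0$ such that, for every parabolic rectangle $R=R(x,t,L)$ and every $\lambda\geq(w_{U^-})^{-1}$,
\[
|R^+(\gamma)\cap\{w^{-1}>\lambda\}|\leq c\lambda\, w(R^\tau\cap\{w^{-1}>\rho\lambda\}).
\]
Working with the truncation $v_k=\min(w^{-1},k)$ to guarantee integrability and applying Cavalieri's principle with exponent $s>0$ split at $\lambda_0=(w_{U^-})^{-1}$, then using the good lambda bound, Fubini, and the pointwise inequality $wv_k^{s+1}\leq v_k^s$, one obtains
\[
\int_{R^+(\gamma)} v_k^s \leq \lambda_0^s|R^+(\gamma)| + \frac{sc}{\rho^{s+1}(s+1)}\int_{R^\tau} v_k^s.
\]

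The main obstacle is absorbing the $R^\tau$-integral on the right, since $R^\tau$ properly contains $R^+(\gamma)$ and extends backward in time by $\tau(1+\gamma)L^p$, so direct absorption is impossible. The plan is to iterate the above estimate through a dyadic covering of $R^+(\gamma)$ by smaller upper parts $R^+_{i,j}(\gamma)$ of spatial side length $L/2^{i/p}$, in the spirit of the covering--iteration schemes used in the proofs of $(iv)\Rightarrow(i)$ of Theorem~\ref{thm:RHIchar} and of Theorem~\ref{gehring}. At each generation, the boundary averages $w_{U^-_{i,j}}$ are controlled in terms of $w_{R^-(\gamma)}$ by iterated application of the forward doubling condition~\eqref{eq:pardoubling}, so that the inhomogeneous terms $\lambda_{0,i,j}^s |R^+_{i,j}(\gamma)|$ sum in a controlled way. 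Choosing $s$ small enough that the resulting geometric series with ratio proportional to $sc/(\rho^{s+1}(s+1))$ times the covering multiplicity $M/2$ converges, and passing $k\to\infty$ by monotone convergence, yields a uniform bound of the form
\[
\Bigl(\dashint_{R^+(\gamma)} w^{-s}\Bigr)^{1/s}\leq \frac{C}{w_{R^-(\gamma)}}.
\]
Multiplying by $\dashint_{R^-(\gamma)} w$ and setting $r=1+1/s$ is precisely the $A_r^+(\gamma)$ condition, completing the proof.
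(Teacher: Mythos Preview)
Your approach is essentially identical to the paper's: verify the hypotheses of Lemma~\ref{lemma:measureweight-estimate} via Lemma~\ref{lem:RHItimelag} and the H\"older argument of Theorem~\ref{thm:quantiAinfty}, apply Cavalieri's principle with truncation to obtain an iterable estimate of the form~\eqref{eq:qualiproof_cavalieri_iteration}, and absorb the backward-in-time integral through a dyadic covering--iteration scheme in which the inhomogeneous averages $(w_{U^-_{i,j}})^{-\varepsilon}$ are controlled by a chain built from the doubling condition~\eqref{eq:pardoubling}. One point to watch: because the iterated coverings of $R^{\tau,-}$ (not of $R^+(\gamma)$, as you wrote) drift backward in time beyond $R^-(\gamma)$, the doubling chain naturally terminates at a fixed rectangle $U^{\sigma,-}_0=R^+(\gamma)-(0,\sigma(1+\gamma)L^p)$ with $\sigma>1$ rather than at $R^-(\gamma)$, so the final inequality reads $\bigl(\dashint_{R^+(\gamma)}w^{-\varepsilon}\bigr)^{1/\varepsilon}\leq C/w_{U^{\sigma,-}_0}$ and the paper then invokes \cite[Theorem~3.1]{KinnunenMyyry2023} (time-lag independence of $A_r^+$) to reach the standard $A_r^+(\gamma)$ condition.
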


\begin{proof}
By Lemma~\ref{lem:RHItimelag} and the proof of Theorem~\ref{thm:quantiAinfty}, we see that the assumptions of Lemma~\ref{lemma:measureweight-estimate} are satisfied and thus it can be applied.
Let $R \subset \mathbb R^{n+1}$ be a parabolic rectangle.
Let $\varepsilon>0$ to be chosen later.
Denote $B = (w_{U^-})^{-1}$.
We show that Lemma~\ref{lemma:measureweight-estimate} implies the corresponding estimate for the truncated weight $\max\{w,1/k\}$, $ k \in \mathbb N $, that is,
\begin{equation}
\label{eq:RHAq_trunc}
\lvert R^{+}(\gamma) \cap \{ w_k^{-1} > \lambda \} \rvert \leq c \lambda w_k( R^{\tau} \cap \{ w_k^{-1} > \rho \lambda \}  ) .
\end{equation}
If $\lambda\geq k$, then $\{ w_k^{-1} > \lambda \} = \emptyset $ and thus the estimate holds.
On the other hand,
if $\lambda < k$,
then $\{ w_k^{-1} > \lambda \} = \{ w^{-1} > \lambda \}$ and $\{ w_k^{-1} > \rho \lambda \}  = \{ w^{-1} > \rho \lambda \} $.
Hence, \eqref{eq:RHAq_trunc} holds true.

Applying Cavalieri's principle with 
\eqref{eq:RHAq_trunc}, we obtain
\begin{align*}
\int_{R^+(\gamma)} w_k^{-\varepsilon} &= \varepsilon \int_0^\infty \lambda^{\varepsilon-1} \lvert R^{+}(\gamma) \cap \{ w_k^{-1} > \lambda \} \rvert \dla \\
&= \varepsilon \int_0^{B} \lambda^{\varepsilon-1} \lvert R^{+}(\gamma) \cap \{ w_k^{-1} > \lambda \} \rvert \dla\\
&\qquad + \varepsilon \int_{B}^\infty \lambda^{\varepsilon-1} \lvert R^{+}(\gamma) \cap \{ w_k^{-1} > \lambda \} \rvert \dla \\
&\leq \lvert R^{+}(\gamma) \rvert \varepsilon \int_0^{B} \lambda^{\varepsilon-1} \dla + c \varepsilon \int_{B}^\infty \lambda^{\varepsilon} w_k( R^{\tau} \cap \{ w_k^{-1} > \rho \lambda \}  ) \dla \\
&\leq \lvert R^{+}(\gamma) \rvert B^\varepsilon + \frac{c \varepsilon}{\rho^{1+\varepsilon}} \int_0^\infty \lambda^{\varepsilon} w_k( R^{\tau} \cap \{ w_k^{-1} > \lambda \}  ) \dla \\
&\leq \lvert U^- \rvert (w_{U^-})^{-\varepsilon} + \frac{c}{\rho^{1+\varepsilon}} \frac{\varepsilon}{1+\varepsilon} \int_{R^{\tau}} w_k^{-\varepsilon} .
\end{align*}
By choosing $\varepsilon>0$ to be small enough, we can absorb the integral over $R^+(\gamma)$ of the second term to the left-hand side to get
\begin{align*}
\biggl( 1- \frac{c}{\rho^{1+\varepsilon}} \frac{\varepsilon}{1+\varepsilon} \biggr) \int_{R^+(\gamma)} w_k^{-\varepsilon} \leq \lvert U^- \rvert (w_{U^-})^{-\varepsilon} + \frac{c}{\rho^{1+\varepsilon}} \frac{\varepsilon}{1+\varepsilon} \int_{R^{\tau}\setminus R^+(\gamma)} w_k^{-\varepsilon} .
\end{align*}
Denote $R^{\tau,-} = R^{\tau} \setminus R^+(\gamma)$.
Hence, we have
\begin{equation}
\label{eq:qualiproof_cavalieri_iteration}
\int_{R^+(\gamma)} w_k^{-\varepsilon} \leq c_0 \lvert U^- \rvert (w_{U^-})^{-\varepsilon} + c_1 \varepsilon \int_{R^{\tau,-}} w_k^{-\varepsilon} ,
\end{equation}
where
\[
c_0 = \frac{1+\varepsilon }{1-(c \rho^{-1-\varepsilon} -1) \varepsilon} \quad \text{and} \quad c_1 = \frac{c \rho^{-1-\varepsilon} }{1-(c \rho^{-1-\varepsilon} -1) \varepsilon} .
\]

Fix $R_0 = Q(x_0,L) \times (t_0 - L^p, t_0 + L^p) \subset \mathbb R^{n+1}$.
We cover $R^{\tau,-}_{0}(\gamma)$ by 
\[
M = 2^n \biggl\lceil \frac{\tau(1+\gamma)}{(1-\gamma)/2^p} \biggr\rceil = 2^{n} \biggl\lceil 2^p \tau \frac{1+\gamma}{1-\gamma} \biggr\rceil
\]
rectangles $R^+_{1,j}(\gamma)$ with spatial side length $ L_1 = L/2$ and time length $ (1-\gamma)L_1^p = (1-\gamma) L^p / 2^p$. This can be done by dividing each spatial edge of $R^{\tau,-}_{0}(\gamma)$ into two equally long pairwise disjoint intervals, and the time interval of $R^{\tau,-}_{0}(\gamma)$ into $\lceil 2^p \tau (1+\gamma) /(1-\gamma) \rceil$ equally long intervals such that their overlap is bounded by $2$.
Thus, the overlap of $R^+_{1,j}(\gamma)$ is bounded by $2$.
Then consider $R^{\tau,-}_{1,j}(\gamma)$ and cover it in the same way as before by $M$ rectangles $R^+_{2,j}(\gamma)$ with spatial side length $ L_2 = L/2^{2}$ and time length $ (1-\gamma)L_2^p = (1-\gamma)L^p / 2^{2p}$.
At the $i$th step, cover $R^{\tau,-}_{i-1,j}(\gamma)$ by $M$ rectangles $R^+_{i,j}(\gamma)$ with spatial side length $ L_i = L/2^{i}$ and time length $ (1-\gamma)L_i^p = (1-\gamma)L^p / 2^{pi}$ such that their overlap is bounded by $2$ for fixed $R^{\tau,-}_{i-1,j}(\gamma)$.
We observe that  the bottom of $R_0^+(\gamma)$ is time distance at most
\begin{equation}
\label{eq:qualiproof_maxdist-lowerparts}
\sum_{i=0}^\infty l_t(R^{\tau,-}_{i,j}(\gamma)) = \sum_{i=0}^\infty \frac{\tau(1+\gamma)L^p}{2^{pi}} = \frac{2^p}{2^p -1} \tau(1+\gamma)L^p 
\end{equation}
above the bottom of $U^-_{i,j}$.

We construct a chain 
of rectangles
from each $U^-_{i,j}$ to 
$U^{\sigma,-}_{0} = R^+(\gamma) - (0, \sigma (1+\gamma) L^p )$, where $\sigma\geq\tau$ is chosen later.
Fix $U^-_{i} = U^-_{i,j}$.
Let $N=i$ denote the number of rectangles in the chain and $d_{i,m}$, $m\in \{1,\dots,N\}$, the distances between the bottoms of the rectangles 
given by
\begin{align*}
d_{i,m} &= 2^{mp} (1+\gamma) L_i^p + \frac{1}{2} (2^{mp}-2^{(m-1)p}) (1-\gamma)L_i^p \\
&\qquad + 2^{(m+1)p} (1+\gamma) L_i^p + \frac{1}{2} (2^{(m+1)p}-2^{mp}) (1-\gamma)L_i^p \\
&= 
2^{mp} (2^p+1) (1+\gamma) L_i^p + 2^{mp-1} (2^{p}-2^{-p}) (1-\gamma)L_i^p .
\end{align*}
Define the elements of the chain by
\begin{align*}
V_0 = U^-_i = Q(x_{R_{i}},L_i) \times (a_0 , a_0 + (1-\gamma) L_i^p) \quad\text{and}\quad V_m = Q_{m} \times I_m
\end{align*}
for every $m\in \{1,\dots,N\}$, where
\begin{align*}
Q_m &= 2^m Q(x_{R_{i}},L_i) + \frac{2^{m}-1}{2^{i}-1}(x_{R_{0}} - x_{R_{i}}) , \\
I_m &= (a_m,b_m) = (a_{m-1} - d_{i,m} ,  a_{m-1} + 2^{mp} (1-\gamma) L_i^p - d_{i,m}  ) .
\end{align*}
Observe that $Q_0 = pr_x(U^-_{i})$, $Q_{N} = pr_x(U^-_{0,\sigma})$ and 
$\lvert V_{m} \vert = 2^{n+p}\lvert V_{m-1} \rvert$.
The bottom of $V_0$ is time distance 
\begin{align*}
\sum_{m=1}^{N} d_{i,m} &=
\sum_{m=1}^{i} 2^{mp} (2^p+1) (1+\gamma) L_i^p + 2^{mp-1} (2^{p}-2^{-p}) (1-\gamma)L_i^p \\
& = 
\frac{2^{2p}+2^p}{2^p -1} \frac{2^{pi}-1}{2^{pi}} (1+\gamma) L^p
+ \frac{2^{2p}-1 }{2^{p+1} -2} \frac{2^{pi}-1}{2^{pi}} (1-\gamma) L^p 
\end{align*}
above the bottom of $V_N$.
Hence, the bottom of $V_0$ is time distance at most
\begin{equation}
\label{eq:qualiproof_maxdist-chain}
\sum_{m=1}^{\infty} d_{i,m} =
\frac{2^{2p}+2^p}{2^p -1} (1+\gamma) L^p
+ \frac{2^{2p}-1 }{2^{p+1} -2} (1-\gamma) L^p 
\end{equation}
above the bottom of $V_N$.
By combining~\eqref{eq:qualiproof_maxdist-lowerparts} and~\eqref{eq:qualiproof_maxdist-chain}, 
we obtain an upper bound for the time length
from the bottom of $R_0^+(\gamma)$ to the bottom of $V_N$. Based on this, we fix $U^{\sigma,-}_{0}$ by choosing $\sigma$ such that
\begin{align*}
\sigma (1+\gamma)L^p 
&=\sum_{i=0}^\infty l_t(R^{\tau,-}_{i,j}(\gamma))+\sum_{m=1}^{\infty} d_{i,m} \\
&= \frac{2^p}{2^p -1} \tau(1+\gamma)L^p +  \frac{2^{2p}+2^p}{2^p -1} (1+\gamma) L^p
+ \frac{2^{2p}-1 }{2^{p+1} -2} (1-\gamma) L^p ,
\end{align*}
that is,
\[
\sigma = \frac{2^p\tau}{2^p -1} + \frac{2^{2p}+2^p}{2^p -1}
+ \frac{2^{2p}-1 }{2^{p+1} -2} \frac{1-\gamma}{1+\gamma} .
\]

We add one more rectangle $V_{N+1}$ into the chain so that the chain would end at $U^{\sigma,-}_{0}$.
Let
\[
V_{N+1} = V_N - (0, b_i l_t(V_N) ) = V_N - (0, b_i 2^{pi} (1-\gamma) L_{i}^p ) = 
V_N - ( 0, b_i (1-\gamma) L^p ) , 
\]
where $b_i$ is chosen such that the bottom of $V_{N+1}$ intersects with the bottom of $U^{\sigma,-}_{0}$.
Then $U^{\sigma,-}_{0}$ is contained in $V_{N+1}$.
Next we find an upper bound for $b_i$.
We observe that a rough lower bound for
the time length from the bottom of $R^+_0(\gamma)$ to the bottom $V_N$ is
given by
\[
\frac{(1-\gamma)L^p}{2^p} + 
(2^p+1) (1+\gamma) L^p + 2^{-1} (2^{p}-2^{-p}) (1-\gamma)L^p .
\]
Therefore, the bottom of $V_N$ is time distance at most
\begin{align*}
& \sigma (1+\gamma)L^p - 
(2^p+1) (1+\gamma) L^p - 2^{-1} (2^{p}+2^{-p}) (1-\gamma)L^p
\\
&\qquad =
\frac{2^p}{2^p -1} \tau(1+\gamma)L^p +  \frac{2^p+1}{2^p -1} (1+\gamma) L^p
- \frac{2^{p}+2^{-p} }{2^{p+1} -2} (1-\gamma) L^p 
\end{align*}
above the bottom of $U^{\sigma,-}_{0}$.
By this,
we obtain an upper bound for $b_i$
\[
b_i (1-\gamma) L^p 
\leq \frac{2^p}{2^p -1} \tau(1+\gamma)L^p +  \frac{2^p+1}{2^p -1} (1+\gamma) L^p
- \frac{2^{p}+2^{-p} }{2^{p+1} -2} (1-\gamma) L^p ,
\]
that is,
\[
b_i \leq \frac{2^p \tau + 2^p + 1}{2^p -1} \frac{1+\gamma}{1-\gamma} - \frac{2^{p}+2^{-p} }{2^{p+1} -2}  = \theta .
\]

By the definition of $V_m$,  
we can apply the parabolic doubling condition~\eqref{eq:pardoubling} twice for each pair of $V_{m-1}, V_m$, $m\in\{1,\dots,N\}$,
and Lemma~\ref{lemma:timemove}~$(ii)$ for $V_{N}, V_{N+1}$ with $\theta\geq\sup b_i$ to get
\[
w(V_0) \geq c_d^{-2} w(V_1) \geq c_d^{-2N} w(V_N) \geq 
c_d^{-2i}
\frac{1}{c_2}
w(V_{N+1}) ,
\]
where $c_2$
is the constant from Lemma~\ref{lemma:timemove}~$(ii)$.
We conclude that
\begin{equation}
\label{eq:qualiproof_chainestimate2}
w(U^{\sigma,-}_{0}) \leq w(V_{N+1}) 
\leq 
c_2 c_d^{2i} w(V_0) 
\leq c_2 \xi^i w(U^-_i) ,
\end{equation}
where 
$\xi = c_d^{2}$.

We iterate~\eqref{eq:qualiproof_cavalieri_iteration} to obtain
\begin{align*}
\int_{R^+_0(\gamma)} w_k^{-\varepsilon} &\leq c_0 \lvert U^-_0 \rvert (w_{U^-_0})^{-\varepsilon} + c_1 \varepsilon \int_{R^{\tau,-}_{0}} w_k^{-\varepsilon} \\
&\leq c_0 \lvert U^-_0 \rvert (w_{U^-_0})^{-\varepsilon} + c_1 \varepsilon \sum_{j=1}^M \int_{R^+_{1,j}(\gamma)} w_k^{-\varepsilon} \\
&\leq c_0 \lvert U^-_0 \rvert (w_{U^-_0})^{-\varepsilon} \\
&\qquad+ c_1 \varepsilon \sum_{j=1}^M \biggl( c_0 \lvert U^-_{1,j} \rvert (w_{U^-_{1,j}})^{-\varepsilon} + c_1 \varepsilon \int_{R^{\tau,-}_{1,j}(\gamma)} w_k^{-\varepsilon} \biggr) \\
&= c_0 \lvert U^-_0 \rvert (w_{U^-_0})^{-\varepsilon} \\
&\qquad+ c_0  c_1 \varepsilon \sum_{j=1}^M \lvert U^-_{1,j} \rvert (w_{U^-_{1,j}})^{-\varepsilon} + (c_1 \varepsilon)^2 \sum_{j=1}^M \int_{R^{\tau,-}_{1,j}(\gamma)} w_k^{-\varepsilon} \\
&\leq c_0 \sum_{i=0}^N \biggl( (c_1 \varepsilon)^{i} \sum_{j=1}^{M^i} \lvert U^-_{i,j} \rvert (w_{U^-_{i,j}})^{-\varepsilon} \biggr) + (c_1 \varepsilon)^{N+1} \sum_{j=1}^{M^N} \int_{R^{\tau,-}_{N,j}(\gamma)} w_k^{-\varepsilon} \\
&\leq c_0 \sum_{i=0}^N \biggl( (c_1 \varepsilon)^{i} \sum_{j=1}^{M^i} \lvert U^-_{i,j} \rvert (w_{U^-_{i,j}})^{-\varepsilon} \biggr) + (c_1 \varepsilon)^{N+1} M^N \int_{R^{\sigma,-}_{0}(\gamma)} w_k^{-\varepsilon} \\
&= I + II .
\end{align*}
We observe that $II$ tends to zero if $\varepsilon < \frac{1}{c_1 M}$ as $N \to \infty$ since
$w_k^{-\varepsilon}$ is bounded.
For the inner sum of the first term $I$, we apply~\eqref{eq:qualiproof_chainestimate2} to get
\begin{align*}
\sum_{j=1}^{M^i} \lvert U^-_{i,j} \rvert (w_{U^-_{i,j}})^{-\varepsilon} 
&= \sum_{j=1}^{M^i} \lvert U^-_{i,j} \rvert^{1+\varepsilon} w(U^-_{i,j})^{-\varepsilon}\\
& \leq \sum_{j=1}^{M^i} 2^{-(n+p)(1+\varepsilon) i} \lvert U^{\sigma,-}_{0} \rvert^{1+\varepsilon} w(U^-_{i,j})^{-\varepsilon} \\
&\leq \sum_{j=1}^{M^i} 2^{-(n+p)(1+\varepsilon) i} \lvert U^{\sigma,-}_{0} \rvert^{1+\varepsilon} c_2^\varepsilon \xi^{\varepsilon i} w(U^{\sigma,-}_{0})^{-\varepsilon} \\
&= 2^{-(n+p)(1+\varepsilon) i}  c_2^\varepsilon \xi^{\varepsilon i} M^i \lvert U^{\sigma,-}_{0} \rvert (w_{U^{\sigma,-}_{0}})^{-\varepsilon} .
\end{align*}
Thus, it follows that
\begin{align*}
I &\leq c_0 \sum_{i=0}^N (c_1 \varepsilon)^{i} 2^{-(n+p)(1+\varepsilon) i} c_2^\varepsilon \xi^{\varepsilon i} M^i \lvert U^{\sigma,-}_{0} \rvert (w_{U^{\sigma,-}_{0}})^{-\varepsilon} \\
&\leq c_0 c_2^\varepsilon \lvert U^{\sigma,-}_{0} \rvert (w_{U^{\sigma,-}_{0}})^{-\varepsilon} \sum_{i=0}^N (c_1 \varepsilon)^{i} 2^{-(n+p)(1+\varepsilon) i}  \xi^{\varepsilon i} M^i .
\end{align*}
We estimate the sum by
\begin{align*}
\sum_{i=0}^N (c_1 \varepsilon)^{i} 2^{-(n+p)(1+\varepsilon) i}  \xi^{\varepsilon i} M^i &= \sum_{i=0}^N \bigl( c_1 \varepsilon 2^{-(n+p)(1+\varepsilon)}  \xi^{\varepsilon} M \bigr)^i \\
&\leq \frac{1}{1- c_1 \varepsilon 2^{-(n+p)(1+\varepsilon)}  \xi^{\varepsilon} M} ,
\end{align*}
whenever $\varepsilon$ is small enough, for example, $\varepsilon < 2^{n+p} /(c_1 \xi M)$.
Then it holds that
\begin{align*}
\int_{R^+_0(\gamma)} w_k^{-\varepsilon}
&\leq \frac{c_0 c_2^\varepsilon}{1- c_1 \varepsilon 2^{-(n+p)(1+\varepsilon)}  \xi^{\varepsilon} M} \lvert U^{\sigma,-}_{0} \rvert (w_{U^{\sigma,-}_{0}})^{-\varepsilon}
\end{align*}
for small enough $\varepsilon$.
By applying Fatou's lemma as $k \to \infty$, we obtain
\[
\dashint_{U^{\sigma,-}_{0}} w \biggl( \dashint_{R^+_0(\gamma)} w^{-\varepsilon} \biggr)^\frac{1}{\varepsilon} \leq c_3 ,
\]
where
\[
c_3^\varepsilon = \frac{c_0 c_2^\varepsilon}{1- c_1 \varepsilon 2^{-(n+p)(1+\varepsilon)}  \xi^{\varepsilon} M} .
\]
By
\cite[Theorem~3.1]{KinnunenMyyry2023},
we conclude that $w \in A^+_r(\gamma)$ with $r = 1+1/\varepsilon$.
This completes the proof.
\end{proof}

\end{document}